\newtheorem {theorem}{Theorem}[section]
\newtheorem {corollary}[theorem]{Corollary}
\newtheorem {proposition}[theorem]{Proposition}
\newtheorem {lemma}[theorem]{Lemma}
\newtheorem {example}[theorem]{Example}
\newtheorem {definition}[theorem]{Definition}
\newtheorem {remark}[theorem]{Remark}
\newcommand {\Limsup} {\mathop{{\rm Lim\,sup}\,}}
\def\ar{a\kern-.370em\raise.16ex\hbox{\char95\kern-0.53ex\char'47}\kern.05em}
\def\ees{{\accent"5E e}\kern-.385em\raise.2ex\hbox{\char'23}\kern-.08em}
\def\eex{{\accent"5E e}\kern-.470em\raise.3ex\hbox{\char'176}}
\def\AR{A\kern-.46em\raise.80ex\hbox{\char95\kern-0.53ex\char'47}\kern.13em}
\def\EES{{\accent"5E E}\kern-.5em\raise.8ex\hbox{\char'23 }}
\def\EEX{{\accent"5E E}\kern-.60em\raise.9ex\hbox{\char'176}\kern.1em}
\def\ow{o\kern-.42em\raise.82ex\hbox{
		\vrule width .12em height .0ex depth .075ex \kern-0.16em \char'56}\kern-.07em}
\def\OW{O\kern-.460em\raise1.36ex\hbox{
		\vrule width .13em height .0ex depth .075ex \kern-0.16em \char'56}\kern-.07em}
\def\UW{U\kern-.42em\raise1.36ex\hbox{
		\vrule width .13em height .0ex depth .075ex \kern-0.16em \char'56}\kern-.07em}
\def\B {\mathbb{B}}
\title{Subdifferentials at infinity and applications in optimization}
\author{DO SANG KIM} 
\address{Department of Applied Mathematics, Pukyong National University, Korea}
\email{dskim@pknu.ac.kr}
\author{MINH T\`UNG NGUY\EEX N} 
\address{Faculty of Mathematical Economics, Banking University of Ho Chi Minh City, Ho Chi Minh City,Vietnam}
\email{tungnm@hub.edu.vn}
\author{TI\EES N-S\OW N PH\d{A}M}
\address{Department of Mathematics, Dalat University, 1 Phu Dong Thien Vuong, Dalat, Vietnam}
\email{sonpt@dlu.edu.vn}
\date{ \today}
\subjclass[2010]{90C30 $\cdot$ 90C46 $\cdot$ 49J52 $\cdot$ 49J53}
\keywords{Subdifferentials at infinity, optimality conditions, weak sharp minima, stability, Lipschitzness at infinity}
\thanks{}
\begin{document}
	\maketitle
	
	\begin{abstract} 
		In this work, the notions of {\em normal cones at infinity} to unbounded sets and {\em limiting and singular subdifferentials at infinity} for extended real value functions are introduced. Various calculus rules for these notions objects are established. A complete characterization of the Lipschitz continuity at infinity for lower semi-continuous functions is given. The obtained results are aimed ultimately at applications to diverse problems of optimization, such as optimality conditions, coercive properties, weak sharp minima and stability results.
	\end{abstract}
	
	\section{Introduction}
	
	The theory of subdifferentials (or generalized derivatives) is concerned with the differential properties of functions, which do not have derivatives in the usual sense. This theory, in the setting of finite dimensional spaces, associates with a function $f \colon \mathbb{R}^n \to \overline{\mathbb{R}} := \mathbb{R} \cup \{\infty\}$ and a point $x \in \mathbb{R}^n$, a certain (not necessarily convex) subset $\partial f(x)$ of $\mathbb{R}^n,$ 
	which is called the {\em subdifferential} of $f$ at $x$ and its elements are called {\em subgradients} of $f$ at $x.$ The theory found widespread application in optimization and problems of control and partial differential equations, as seen for instance in the books \cite{Clarke1990, Clarke1998, Ioffe2017, Mordukhovich2006, Mordukhovich2018, Penot2013, Rockafellar1998} with the references therein.

We will not discuss in detail aspects of the theory of subdifferentials and just recall that, 
% the subdifferential of convex functions was defined by Rockafellar \cite{Rockafellar1963}, Clarke's subdifferential was proposed by Clarke \cite{Clarke1973}, the limiting subdifferential was introduced by Mordukhovich \cite{Mordukhovich1976-1}. We also note that, 
under certain assumptions, rules such as the necessary optimality condition $0 \in \partial f(x)$ are valid.
	
Very recently, following the ideas of Clarke \cite{Clarke1973} and Rockafellar \cite{Rockafellar1963}, the second and third authors of this paper introduced in \cite{PHAMTS2023-4} several novel notions of variational analysis, including Clarke's tangent and normal cones, subgradients, Lipschitz continuity, etc., {\em at infinity;} they studied the fundamental properties of these notions and derived necessary optimality conditions for optimization problems in scenarios where objective functions are bounded from below but do not have a global minimum, in which existing subdifferentials Fermat rules do not apply.
	
	The present paper concerns to the behavior of  sets and functions at infinity; however, our approach here is different from the one in \cite{PHAMTS2023-4}. Indeed, following the idea of Mordukhovich \cite{Mordukhovich1976-1}, we define and study {\em normal cones at infinity} to unbounded sets as well as {\em limiting and singular subdifferentials at infinity} for extended real value functions. The obtained results are aimed ultimately at applications to diverse problems of optimization following the now-familiar pattern for subdifferentials in the classical case.
	
	\subsection*{Contributions}
	Given an unbounded closed set $\Omega \subset \mathbb{R}^n$ and a lower semi-continuous function $f \colon \mathbb{R}^n \to \overline{\mathbb{R}},$ our main contributions are as follows:
	\begin{itemize}
		\item We define and study the {\em normal cone to $\Omega$ at infinity}, denoted by $N_\Omega(\infty),$ and the {\em limiting and singular subdifferentials of $f$ at infinity}, denoted by $\partial f(\infty)$ and $\partial^\infty f(\infty).$ We also provide a number of rules for computing of these notions.
		
		\item We show that the function $f$ is {\em Lipschitz at infinity} if and only if its singular subdifferential at infinity reduces to $\{0\},$ and under this equivalence, the {\em Clarke subdifferential of $f$ at infinity} introduced in \cite{PHAMTS2023-4} is equal to the convex hull of the limiting subdifferential of $f$ at infinity.
		
		\item Assume that $f$ is bounded from below on $\Omega.$ If the optimization problem
		\begin{equation*}
		\mathrm{minimize } \ f (x) \quad \textrm{ over } \quad x \in \Omega. \tag{P}
		\end{equation*}
		has no solution, then the {\em necessary optimality condition at infinity} holds: 
		$$0 \in \partial f(\infty) + N_{\Omega}(\infty).$$ 
		
		\item Assume that $0 \not \in \partial f(\infty)+ N_{\Omega}(\infty).$ We have the following statements: 
		
		\begin{itemize}
			\item The function $f$ is coercive on $\Omega,$ the solution set $\mathrm{Sol}$ of the problem~(P) is nonempty compact and there exist constants $c > 0$ and $R > 0$ satisfying the {\em weak sharp minima at infinity}:
			\begin{eqnarray*}
				f(x) - \min_{x \in \Omega} f(x) &\ge& c\, \mathrm{dist} (x, \mathrm{Sol}) \quad \textrm{ for all } \quad x \in \Omega \ \textrm{ with } \ \|x\| > R,
			\end{eqnarray*}
			where $\mathrm{dist} (x, \mathrm{Sol})$ stands for the distance from $x$ to $\mathrm{Sol}.$ 
			\item  For all $u \in \mathbb{R}^n$ near $0,$ the solution set $\mathrm{Sol}(u)$ in the perturbed optimization problem
			\begin{equation*}
			\mathrm{minimize } \ f_u(x) := f(x) - \langle u, x \rangle  \quad \textrm{ over } \quad x \in \Omega, \tag{P$_u$}
			\end{equation*}
			is nonempty compact and satisfies the inclusion
			\begin{eqnarray*}
				\Limsup_{u \to 0} \mathrm{Sol}(u) &\subset& \mathrm{Sol}(0).
			\end{eqnarray*}
		\end{itemize}
\end{itemize}
	
It should be noted that, due to the unboundedness and nonconvexity of neighborhoods at infinity, smooth functions are not necessarily Lipschitz at infinity, the limiting subdifferential at infinity of a smooth function may not be a singleton set, and that the limiting subdifferential at infinity of a convex function is not necessarily a convex set; for more details, see Example~\ref{Example4.7} below.
	
Also note that some results given in this paper can be extended to infinite dimensional spaces. However, to lighten the exposition, we do not pursue this extension here.
	
The rest of this paper is organized as follows. Some definitions and preliminary results from variational analysis are recalled in Section~\ref{Section2}.  Normal cones at infinity to unbounded subsets of $\mathbb{R}^n$ and subdifferentials at infinity for extended real-valued functions on $\mathbb{R}^n$ are introduced and studied in Sections~\ref{Section3} and \ref{Section4}, respectively. The Lipschitz continuity at infinity for lower semi-continuous functions is investigated in Section~\ref{Section5}. Finally, necessary optimality conditions and weak sharp minima at infinity as well as stability results for optimization problems are derived in Section~\ref{Section6}.

\section{Preliminaries} \label{Section2}

\subsection{Notation} 
Throughout this work we deal with the Euclidean space $\mathbb{R}^n$ equipped with the usual scalar product $\langle \cdot, \cdot \rangle$ and the corresponding norm $\| \cdot\|.$ We denote by $\mathbb{B}_r(x)$ the closed ball centered at $x$ with radius $r;$  when ${x}$ is the origin of $\mathbb{R}^n$ we write $\mathbb{B}_{r}$ instead of $\mathbb{B}_{r}({x}),$ and when $r = 1$ we write  $\mathbb{B}$ instead of $\mathbb{B}_{1}.$ We will adopt the convention that $\inf \emptyset = +\infty$ and $\sup \emptyset = -\infty.$

For a nonempty set $\Omega \subset \mathbb{R}^n,$ the closure, convex hull, conic hull, and positive hull of $\Omega$ are denoted, respectively, by 
$\mathrm{cl}\, {\Omega},$ $ \mathrm{co}\, \Omega,$ $\mathrm{cone}\, \Omega,$ and $\mathrm{pos}\, \Omega.$ We say that $\Omega$ is {\em locally closed} if for each $x \in \Omega$ there exists a neighborhood $U$ of ${x}$ such that $\Omega \cap U$ is a closed set. 

Let $\mathbb{R}_+ := [0, +\infty)$ and $\overline{\mathbb{R}} := \mathbb{R} \cup \{+\infty\}.$ For an extended real-valued function $f \colon \mathbb{R}^n \rightarrow \overline{\mathbb{R}},$ we denote its {\em effective domain} and {\em epigraph} by,  respectively,
\begin{eqnarray*}
\mathrm{dom} f &:=& \{ x \in \mathbb{R}^n \ | \ f(x) < \infty  \},\\
\mathrm{epi} f &:=& \{ (x, r) \in \mathbb{R}^n \times \mathbb{R} \ | \ f(x) \le r \}.
\end{eqnarray*}
We call $f$ a {\em proper} function if $f(x) < \infty$ for at least one $x \in \mathbb{R}^n,$ or in other words, if $\mathrm{dom} f$ is a nonempty set.
The function $f$ is said to be {\em lower semi-continuous} if for each $x \in \mathbb{R}^n$ the inequality $\liminf_{x' \to {x}} f(x') \ge f({x})$ holds.

Given a nonempty set $\Omega \subset \mathbb{R}^n,$ associate with it the {\em distance function}
$$\mathrm{dist}(x, \Omega) = d_{\Omega}(x) := \inf_{y \in \Omega} \|x - y\|, \quad x \in \mathbb{R}^n,$$ 
and define the {\em Euclidean projector of} $x \in \mathbb{R}^n$ to $\Omega$ by
\begin{eqnarray*}
\Pi_{\Omega}(x) &:=& \{y \in \Omega \ | \  \|x - y \| = d_{\Omega}(x)\}.
\end{eqnarray*}
The {\em indicator function} $\delta_{\Omega} \colon \mathbb{R}^n \to \overline{\mathbb{R}}$ of the set $\Omega \subset \mathbb{R}^n$ is defined by
\begin{eqnarray*}
\delta_\Omega(x) &:=&
\begin{cases}
0 & \textrm{ if } x \in \Omega, \\
\infty & \textrm{ otherwise.}
\end{cases}
\end{eqnarray*}
By definition, $\Omega$ is closed if and only if $\delta_\Omega$ is lower semi-continuous.

The {\em Painlev\'e--Kuratowski outer limit} of a set-valued map $\Phi \colon \mathbb{R}^n \rightrightarrows \mathbb{R}^m$ is defined by
\begin{eqnarray*}
\Limsup_{x' \to {x}} \Phi(x') &:=& \{y \in \mathbb{R}^m \mid \exists x_k \to {x}, \exists y_k \in \Phi(x_k), y_k \to y\}.
\end{eqnarray*}

\subsection{Normal cones and subdifferentials}

Here we recall some definitions and properties of normal cones to sets and subdifferentials of real-valued functions, which can be found in~\cite{Mordukhovich2006, Mordukhovich2018, Rockafellar1998}.

\begin{definition}{\rm Consider a set $\Omega\subset\mathbb{R}^n$ and a point ${x} \in \Omega.$
\begin{enumerate}
\item[(i)]  The {\em regular normal cone} (known also as the {\em prenormal} or {\em Fr\'echet normal cone}) $\widehat{N}_{\Omega}({x})$ to $\Omega$ at ${x}$ consists of all vectors $\xi \in\mathbb{R}^n$ satisfying
\begin{eqnarray*}
\langle \xi , x' - {x} \rangle &\le& o(\|x' -  {x}\|) \quad \textrm{ as } \quad x' \to {x} \quad \textrm{ with } \quad x' \in \Omega.
\end{eqnarray*}

\item[(ii)] 
The {\em limiting normal cone} (known also as the {\em basic} or {\em Mordukhovich normal cone}) $N_{\Omega} ({x})$ to $\Omega$ at ${x}$ consists of all vectors $\xi  \in \mathbb{R}^n$ such that there are sequences $x_k \to {x}$ with $x_k \in \Omega$ and $\xi _k \rightarrow \xi $ with $\xi _k \in \widehat N_{\Omega}(x_k),$ or in other words, 
\begin{eqnarray*}
{N}_\Omega({x}) & := &  \Limsup_{x' \xrightarrow{\Omega} {x}}\widehat{N}_\Omega({x}'),
\end{eqnarray*}
where $x' \xrightarrow{\Omega} {x}$ means that $x' \rightarrow {x} $ with $x' \in \Omega.$ 
\end{enumerate}

If $x \not \in \Omega,$ we put $\widehat{N}_{\Omega}({x}) := \emptyset$ and ${N}_\Omega({x}) := \emptyset.$
}\end{definition}

\begin{remark}{\rm
(i) It is well-known that $N_\Omega(\overline{x})$ is closed cone (may be non-convex) while $\widehat{N}_\Omega(\overline{x})$ is closed convex cone.

(ii) If $\Omega$ is a manifold of class $C^1,$ then for every point $x \in \Omega,$ the normal cones $\widehat{N}({x}; \Omega)$ and $N({x}; \Omega)$ are equal to the normal space to $\Omega$ at ${x}$ in the sense of differential geometry; see \cite[Example~6.8]{Rockafellar1998}. 

}\end{remark}

\begin{lemma} \label{Lemma2.3}
Let $\Omega \subset \mathbb{R}^n$ be a locally closed set. Then for any $x \in \Omega,$ we have the following relationships
\begin{eqnarray*}
\widehat{N}_{\Omega}(x) &=& \{ \xi \in \mathbb{R}^n \mid \langle \xi, v \rangle \le 0 \quad \textrm{ for all } \quad v \in T_{\Omega}(x) \},\\
N_{\Omega}(x) &=& \Limsup_{x' \rightarrow x} \Big[\mathrm{cone} \big(x' - \Pi_{\Omega}(x') \big) \Big],
\end{eqnarray*}
where $T_{\Omega}(x)$ stands for the {\em contingent cone} to $\Omega$ at $x \in \Omega,$ i.e.,
\begin{eqnarray*}
T_{\Omega}(x) &:=& \Limsup_{t \searrow 0} \frac{\Omega - x}{t}.
\end{eqnarray*}
\end{lemma}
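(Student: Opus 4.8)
The plan is to treat the two identities separately, since the first is a pure duality statement between the contingent cone and the regular normal cone (which needs no closedness), while the second genuinely exploits local closedness through the existence of nearest points.

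For the first identity I would reformulate the defining $o(\|x' - x\|)$ condition of $\widehat{N}_\Omega(x)$ as
$$\limsup_{x' \xrightarrow{\Omega} x,\ x' \ne x} \frac{\langle \xi, x' - x \rangle}{\|x' - x\|} \le 0,$$
and then prove the two inclusions against the polar cone $T_\Omega(x)^\circ := \{\xi \mid \langle \xi, v \rangle \le 0 \text{ for all } v \in T_\Omega(x)\}$. For ``$\subseteq$'', given $\xi \in \widehat{N}_\Omega(x)$ and $v \in T_\Omega(x)$, I pick $t_k \searrow 0$ and $x_k \in \Omega$ with $(x_k - x)/t_k \to v$ from the definition of the contingent cone; writing $\langle \xi, (x_k - x)/t_k \rangle = \frac{\langle \xi, x_k - x \rangle}{\|x_k - x\|} \cdot \frac{\|x_k - x\|}{t_k}$ and passing to the limit yields $\langle \xi, v \rangle \le 0$. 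For ``$\supseteq$'', if $\xi \notin \widehat{N}_\Omega(x)$ there is a sequence $x_k \xrightarrow{\Omega} x$ along which the quotient stays bounded below by some $\varepsilon > 0$; extracting a convergent direction $(x_k - x)/\|x_k - x\| \to v \in T_\Omega(x)$ gives $\langle \xi, v \rangle \ge \varepsilon > 0$, so $\xi \notin T_\Omega(x)^\circ$. This part is routine.

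For the second identity, write $M(x) := \Limsup_{x' \to x}[\mathrm{cone}(x' - \Pi_\Omega(x'))]$; local closedness guarantees $\Pi_\Omega(x') \ne \emptyset$ for $x'$ near $x$. The inclusion $M(x) \subseteq N_\Omega(x)$ is the easy half: if $y \in \Pi_\Omega(x')$ then the nearest-point property gives, for all $z \in \Omega$, the estimate $\langle x' - y, z - y \rangle \le \tfrac12 \|z - y\|^2$, so $x' - y \in \widehat{N}_\Omega(y)$ and hence $\mathrm{cone}(x' - y) \subseteq \widehat{N}_\Omega(y)$. Since $\|x' - y\| = d_\Omega(x') \le \|x' - x\| \to 0$ forces $y \to x$ as $x' \to x$, taking outer limits gives $M(x) \subseteq \Limsup_{y \xrightarrow{\Omega} x} \widehat{N}_\Omega(y) = N_\Omega(x)$.

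The reverse inclusion $N_\Omega(x) \subseteq M(x)$ is where I expect the main obstacle, because a general limiting normal is assembled from regular normals $\xi_k \in \widehat{N}_\Omega(x_k)$, and a regular normal need not itself be realized by any projection. The resolution is the density of proximal normals: every proximal normal $\xi \in N^P_\Omega(y)$ satisfies $y \in \Pi_\Omega(y + t\xi)$ for all small $t > 0$, so with $z_k := y + \tfrac1k \xi$ one has $z_k \to y$ and $\xi = k(z_k - y) \in \mathrm{cone}(z_k - \Pi_\Omega(z_k))$, whence $N^P_\Omega(y) \subseteq M(y)$. I would then invoke the known identity $N_\Omega(x) = \Limsup_{x' \xrightarrow{\Omega} x} N^P_\Omega(x')$ for closed sets (see \cite{Rockafellar1998}): given $\xi \in N_\Omega(x)$ there are $x_k \xrightarrow{\Omega} x$ and proximal normals $\xi_k \in N^P_\Omega(x_k)$ with $\xi_k \to \xi$; setting $z_k := x_k + \tfrac1k \xi_k$ makes $z_k \to x$ with $\xi_k \in \mathrm{cone}(z_k - \Pi_\Omega(z_k))$, so $\xi \in M(x)$. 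The delicate point to get right is exactly this density statement, namely that the regular normals generating $N_\Omega(x)$ can be perturbed into proximal normals at nearby points without changing the outer limit; it is here that local closedness, guaranteeing genuine nearest points, is indispensable.
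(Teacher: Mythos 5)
The paper gives no proof of Lemma~\ref{Lemma2.3} at all: it is recalled as a standard preliminary, both identities being textbook facts (the polarity is \cite[Proposition~6.5]{Rockafellar1998}, the projection formula is \cite[Exercise~6.18]{Rockafellar1998}), so your attempt can only be measured against the standard argument, which it reproduces correctly in substance. Your polarity proof for $\widehat{N}_{\Omega}(x)$ is complete (up to the trivial remark that $v=0$ must be handled separately, since the quotient $\langle\xi,x_k-x\rangle/\|x_k-x\|$ presupposes $x_k\neq x$), and you rightly note it needs no closedness. Your easy half $M(x)\subseteq N_{\Omega}(x)$ is exactly right: the nearest-point inequality $\langle x'-y,\,z-y\rangle\le\frac12\|z-y\|^2$ for $y\in\Pi_{\Omega}(x')$, $z\in\Omega$, shows $x'-y\in\widehat{N}_{\Omega}(y)$, and $\|x'-y\|=d_{\Omega}(x')\le\|x'-x\|$ forces $y\to x$, so the outer limit lands in $\Limsup_{y\xrightarrow{\Omega}x}\widehat{N}_{\Omega}(y)=N_{\Omega}(x)$.

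Two points in the hard half deserve repair, though neither is a gap in the idea. First, setting $z_k:=x_k+\frac1k\xi_k$ is not quite legitimate: the proximal property of $\xi_k$ at $x_k$ only guarantees $x_k\in\Pi_{\Omega}(x_k+t\xi_k)$ for $t$ below some threshold $t_k>0$ that depends on $k$ and may be smaller than $\frac1k$. Take $s_k:=\min\{\frac1k,\,t_k\}$ and $z_k:=x_k+s_k\xi_k$; since $\xi_k$ is bounded, $z_k\to x$ still holds and $\xi_k\in\mathrm{cone}\big(z_k-\Pi_{\Omega}(z_k)\big)$ as you argue. Second, to avoid circularity you should cite the representation $N_{\Omega}(x)=\Limsup_{x'\xrightarrow{\Omega}x}N^{P}_{\Omega}(x')$ from a source where it is established independently of the projection formula, e.g.\ \cite{Clarke1998} or \cite{Mordukhovich2006}, since in \cite{Rockafellar1998} that representation is essentially the very exercise being proved. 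Alternatively, a short self-contained argument closes the loop without any citation: for a unit vector $\xi\in\widehat{N}_{\Omega}(y)$ and small $t>0$, local closedness gives $w_t\in\Pi_{\Omega}(y+t\xi)$ with $\|w_t-y\|\le 2t$, and expanding $\|y+t\xi-w_t\|^2\le t^2$ yields $\|w_t-y\|^2\le 2t\langle\xi,w_t-y\rangle=2t\,o(\|w_t-y\|)$, whence $\|w_t-y\|/t\to0$ and $(y+t\xi-w_t)/t\to\xi$; thus every regular normal at $y$ is a limit of projection directions at points $w_t\xrightarrow{\Omega}y$, and a diagonal extraction over the sequence generating $\xi\in N_{\Omega}(x)$ gives $N_{\Omega}(x)\subseteq M(x)$ directly. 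With these adjustments your proof is complete and coincides with the standard one the paper implicitly relies on.
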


Functional counterparts of normal cones are subdifferentials.
\begin{definition}{\rm
Consider a function $f\colon\mathbb{R}^n \to \overline{\mathbb{R}}$ and a point ${x} \in \mathrm{dom} f.$
\begin{enumerate}[{\rm (i)}]
\item The {\em regular} (or {\em Fr\'echet}) {\em subdifferential} of $f$ at ${x}$ is 
$$\widehat{\partial}f({x}) :=\{ u \in \mathbb{R}^n \mid (u,-1)\in \widehat{N}_{\mathrm{epi} f}({x},f({x}))  \}.  $$
\item The {\em limiting} (or {\em Mordukhovich}) {\em subdifferential} of $f$ at ${x}$ is 
$$\partial f({x}):=\{ u \in \mathbb{R}^n \mid (u,-1)\in {N}_{\mathrm{epi} f}({x},f({x}))  \}.$$
\item The {\em singular subdifferential} of $f$ at ${x}$ is 
$$\partial^\infty f({x}) := \{ u \in \mathbb{R}^n \mid (u,0)\in {N}_{\mathrm{epi} f}({x},f({x}))  \}.$$
\end{enumerate}
If $x \not \in \mathrm{dom} f,$ we put $\widehat{\partial}f({x}) := \emptyset,$ $\partial f({x}) := \emptyset$, and $\partial^\infty f({x}) := \emptyset.$ 
}\end{definition}

\begin{remark}{\rm
In \cite{Mordukhovich2006, Mordukhovich2018, Rockafellar1998} the reader can find equivalent analytic descriptions of the limiting subdifferential $\partial f({x})$ and comprehensive studies of it and related constructions. For convex $f,$ this subdifferential coincides with the convex subdifferential. Furthermore, if the function $f$ is of class $C^1,$ then $\partial f({x}) = \{\nabla f({x})\}.$ The singular subdifferential $\partial^\infty f({x})$ plays an entirely different role--it detects horizontal ``normal'' to the epigraph--and it plays a decisive role in subdifferential calculus.
}\end{remark}

The following lemmas are well known.

\begin{lemma} \label{Lemma2.6}
For a lower semi-continuous function $f \colon \mathbb{R}^n \to \overline{\mathbb{R}}$ and a point $x$ where $f$ is finite, the sets 
$\widehat{\partial}  f(x)$ and ${\partial} f(x)$ are closed, with $\widehat{\partial} f(x)$ convex and $\widehat{\partial} f(x) \subset \partial f(x).$ Furthermore, $\partial^\infty f(x)$ is closed cone and it holds that
\begin{eqnarray*}
\partial f({x}) &=& \Limsup_{x' \xrightarrow{f} {x}}\widehat{\partial} f(x'),
\end{eqnarray*}
where $x' \xrightarrow{f} {x}$ means that $x' \to {x}$ and $f(x') \to f({x}).$
\end{lemma}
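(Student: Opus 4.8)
The plan is to reduce every assertion to the corresponding property of the normal cones to the epigraph, all of which are already available from the definitions and the preceding remark. Throughout, write $E := \mathrm{epi} f$ and $\alpha := f(x)$; since $f$ is lower semi-continuous, $E$ is closed, hence locally closed, so Lemma~\ref{Lemma2.3} applies to it.

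The structural claims are immediate. Because $\widehat{N}_E(x,\alpha)$ is a closed convex cone and $\widehat{\partial} f(x)$ is obtained from it by intersecting with the hyperplane of vectors with last coordinate $-1$ and deleting that coordinate, the set $\widehat{\partial} f(x)$ is closed and convex. Likewise $\partial f(x)$ and $\partial^\infty f(x)$ are slices of the closed cone $N_E(x,\alpha)$ by the hyperplane $\{(\cdot,-1)\}$ and by the subspace $\{(\cdot,0)\}$ respectively, so both are closed; and $\partial^\infty f(x)$ inherits the cone property because the slicing set $\{(\cdot,0)\}$ is itself a subspace. The inclusion $\widehat{\partial} f(x) \subset \partial f(x)$ follows at once from $\widehat{N}_E \subset N_E$.

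The substantive part is the outer-limit formula. For the inclusion $\supset$, take $u$ in the right-hand side: there are $x_k \xrightarrow{f} x$ and $u_k \in \widehat{\partial} f(x_k)$ with $u_k \to u$. By definition $(u_k, -1) \in \widehat{N}_E(x_k, f(x_k))$, and since $(x_k, f(x_k)) \to (x, \alpha)$ inside $E$ while $(u_k, -1) \to (u, -1)$, the defining outer-limit description of the limiting normal cone yields $(u, -1) \in N_E(x, \alpha)$, that is, $u \in \partial f(x)$.

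The reverse inclusion $\subset$ is the main obstacle. Let $u \in \partial f(x)$, so $(u, -1) \in N_E(x, \alpha)$, and choose points $(x_k, r_k) \to (x, \alpha)$ in $E$ together with regular normals $(u_k, \lambda_k) \in \widehat{N}_E(x_k, r_k)$ such that $(u_k, \lambda_k) \to (u, -1)$. The crux is to convert these regular normals of the epigraph into genuine regular subgradients of $f$. Since $\lambda_k \to -1$, we have $\lambda_k < 0$ for all large $k$, and I would first argue that this forces $r_k = f(x_k)$: if instead $r_k > f(x_k)$, then for small $t > 0$ both $(x_k, r_k + t)$ and $(x_k, r_k - t)$ lie in $E$, so the vertical directions $(0, \pm 1)$ belong to the contingent cone $T_E(x_k, r_k)$, and the characterization of $\widehat{N}_E$ in Lemma~\ref{Lemma2.3} then forces $\lambda_k = 0$, a contradiction. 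With $r_k = f(x_k)$ secured, I would rescale by the cone property: from $(u_k, \lambda_k) \in \widehat{N}_E(x_k, f(x_k))$ with $\lambda_k < 0$ one gets $\big({-u_k}/{\lambda_k}, -1\big) \in \widehat{N}_E(x_k, f(x_k))$, i.e. $-u_k/\lambda_k \in \widehat{\partial} f(x_k)$. Finally $-u_k/\lambda_k \to u$ since $\lambda_k \to -1$, and $x_k \xrightarrow{f} x$ since $f(x_k) = r_k \to \alpha$, so $u$ lies in the outer limit. The delicate point throughout is the interplay between the vertical coordinate $r_k$ and the sign of $\lambda_k$; once that is pinned down, every remaining step is a routine passage to the limit.
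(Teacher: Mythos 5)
Your proof is correct. A step-by-step comparison with the paper's proof is moot here: the authors state Lemma~\ref{Lemma2.6} without proof, as a well-known fact imported from the standard monographs \cite{Mordukhovich2006, Mordukhovich2018, Rockafellar1998}, so your write-up supplies an argument the paper omits. What you give is essentially the classical textbook derivation, and you identify the genuinely delicate point correctly: in passing from limiting normals of $\mathrm{epi}\, f$ to regular subgradients, one must rule out approximating points $(x_k, r_k)$ lying strictly above the graph. Your mechanism for this --- $r_k > f(x_k)$ puts both vertical directions $(0, \pm 1)$ in $T_{\mathrm{epi}\, f}(x_k, r_k)$, whence the polarity in Lemma~\ref{Lemma2.3} forces the vertical component $\lambda_k$ of any regular normal to vanish, contradicting $\lambda_k \to -1$ --- is exactly right, and its appeal to Lemma~\ref{Lemma2.3} is legitimate because lower semi-continuity makes $\mathrm{epi}\, f$ closed. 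The subsequent rescaling $(u_k, \lambda_k) \mapsto (-u_k/\lambda_k, -1)$ via the cone property of $\widehat{N}_{\mathrm{epi}\, f}$ is sound, and your observation that $f(x_k) = r_k \to f(x)$ is precisely what produces the convergence $x_k \xrightarrow{f} x$ demanded by the outer limit (and explains why the formula is stated with $x' \xrightarrow{f} x$ rather than merely $x' \to x$). The structural claims (closedness, convexity of $\widehat{\partial} f(x)$, the cone property of $\partial^\infty f(x)$, and $\widehat{\partial} f(x) \subset \partial f(x)$) are all handled correctly as slices of the corresponding normal cones, using the properties recorded in the paper's Remark following the definition of normal cones. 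No gaps.
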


\begin{lemma} \label{Lemma2.7}
For any set $\Omega \subset \mathbb{R}^n$ and point ${x} \in \Omega,$ we have
\begin{eqnarray*}
\partial \delta_\Omega({x}) &=&  \partial^\infty \delta_\Omega({x})  \ = \ N_{\Omega}({x}).
\end{eqnarray*}
\end{lemma}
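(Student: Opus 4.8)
The plan is to reduce both identities to a single computation of the limiting normal cone to the epigraph of $\delta_\Omega$ at the reference point, combined with the elementary product structure of normal cones.

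First I would identify the epigraph. Since $\delta_\Omega(z) = 0$ for $z \in \Omega$ and $\delta_\Omega(z) = +\infty$ otherwise, the defining condition $\delta_\Omega(z) \le r$ holds precisely when $z \in \Omega$ and $r \ge 0$. Hence $\mathrm{epi}\,\delta_\Omega = \Omega \times \mathbb{R}_+$, and the relevant base point is $(x, \delta_\Omega(x)) = (x, 0)$, with $x \in \Omega$ as in the statement.

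Next I would invoke the factorization of the limiting normal cone over Cartesian products, namely $N_{\Omega_1 \times \Omega_2}(z_1, z_2) = N_{\Omega_1}(z_1) \times N_{\Omega_2}(z_2)$, which holds for arbitrary sets $\Omega_1, \Omega_2$. This is standard and may be cited from \cite{Mordukhovich2006, Rockafellar1998}; if one prefers a self-contained derivation, it follows because the regular normal cone factorizes directly from its definition, and the limiting normal cone, being the Painlev\'e--Kuratowski outer limit of regular normal cones, inherits the product structure. Applying this with $\Omega_1 = \Omega$ and $\Omega_2 = \mathbb{R}_+$ yields $N_{\mathrm{epi}\,\delta_\Omega}(x, 0) = N_\Omega(x) \times N_{\mathbb{R}_+}(0)$. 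A one-dimensional computation then gives $N_{\mathbb{R}_+}(0) = (-\infty, 0]$: as $\mathbb{R}_+$ is convex, its normal cone at $0$ is $\{\, s \in \mathbb{R} \mid s\,y \le 0 \ \text{for all}\ y \ge 0 \,\} = (-\infty, 0]$.

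Finally I would read off the two subdifferentials from their definitions. By definition, $u \in \partial\delta_\Omega(x)$ if and only if $(u, -1) \in N_{\mathrm{epi}\,\delta_\Omega}(x, 0) = N_\Omega(x) \times (-\infty, 0]$; since $-1 \in (-\infty, 0]$ automatically, this reduces to $u \in N_\Omega(x)$. Likewise, $u \in \partial^\infty\delta_\Omega(x)$ if and only if $(u, 0) \in N_\Omega(x) \times (-\infty, 0]$, which holds if and only if $u \in N_\Omega(x)$ because $0 \in (-\infty, 0]$. Both computations therefore return $N_\Omega(x)$, giving the claimed chain of equalities. I expect no serious obstacle in this argument; the only step deserving care is the justification of the product rule for the limiting (rather than merely the regular) normal cone in full generality, which is handled either by citation or by the outer-limit argument sketched above.
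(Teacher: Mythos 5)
Your proof is correct. The paper itself offers no argument for this lemma --- it simply cites \cite[Proposition~1.19]{Mordukhovich2018} --- so your proposal supplies the standard self-contained derivation that underlies that citation: identifying $\mathrm{epi}\,\delta_\Omega = \Omega \times \mathbb{R}_+$, factoring the limiting normal cone over the product, computing $N_{\mathbb{R}_+}(0) = (-\infty,0]$, and reading off both subdifferentials at the base point $(x,0)$. Two remarks on how your route sits relative to the paper's framework. First, the product rule you invoke is exactly the fact the paper itself uses later (citing \cite[Proposition~1.4]{Mordukhovich2018} in Section~3), so your argument is consistent with the paper's toolkit; your sketch of why it holds --- the regular normal cone factorizes directly from its defining inequality, and the limiting cone, as a Painlev\'e--Kuratowski outer limit, inherits the factorization by combining the two approximating sequences coordinate-wise --- is sound, with the only point of care being the one you flag (bounding each $o(\|z_i'-z_i\|)$ term by $o$ of the full product-space distance, which works since $\|z_i'-z_i\|$ is dominated by it). Second, your derivation uses no closedness of $\Omega$ anywhere, which matters: the lemma is stated for an arbitrary set $\Omega \subset \mathbb{R}^n$, and your argument honors that generality (note you correctly avoid Lemma~\ref{Lemma2.3}, whose projection formula would require local closedness). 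What the citation buys the paper is brevity; what your argument buys is transparency and an explicit check that the claim holds in the stated generality.
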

\begin{proof}
See \cite[Proposition~1.19]{Mordukhovich2018}.
\end{proof}

\begin{lemma}[Fermat rule] \label{Lemma2.8}
If a proper function $f \colon \mathbb{R}^n \to \overline{\mathbb{R}}$ has a local minimum at $\overline{x},$ then $0 \in \partial f(\overline{x}).$	
\end{lemma}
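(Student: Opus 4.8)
The plan is to prove the Fermat rule (Lemma~\ref{Lemma2.8}) directly from the definition of the limiting subdifferential via the regular subdifferential. The statement asserts that if a proper function $f$ attains a local minimum at $\overline{x}$, then $0 \in \partial f(\overline{x})$. Since $\widehat{\partial} f(\overline{x}) \subset \partial f(\overline{x})$ by Lemma~\ref{Lemma2.6}, it suffices to prove the stronger assertion $0 \in \widehat{\partial} f(\overline{x})$; this is the natural route because the regular subdifferential is defined by a first-order infinitesimal inequality that is tailor-made to detect local minimizers.

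First I would unwind the definition of $\widehat{\partial} f(\overline{x})$ into its analytic form. Membership $0 \in \widehat{\partial} f(\overline{x})$ means $(0,-1) \in \widehat{N}_{\mathrm{epi} f}(\overline{x}, f(\overline{x}))$, which by the definition of the regular normal cone amounts to the estimate
\begin{equation*}
\langle (0,-1), (x',r') - (\overline{x}, f(\overline{x})) \rangle \le o\big(\|(x',r') - (\overline{x}, f(\overline{x}))\|\big)
\end{equation*}
as $(x',r') \to (\overline{x}, f(\overline{x}))$ with $(x',r') \in \mathrm{epi} f$. The left-hand side simplifies to $f(\overline{x}) - r'$. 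Equivalently, in terms of $f$ itself, $0 \in \widehat{\partial} f(\overline{x})$ is characterized by the one-sided Taylor-type inequality $f(x') \ge f(\overline{x}) + o(\|x' - \overline{x}\|)$ as $x' \to \overline{x}$; I would invoke (or briefly rederive) the standard equivalence between the epigraphical definition and this analytic description, as referenced in the sources cited in the remark after the subdifferential definition.

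Next I would use the local minimality hypothesis. By assumption there is a neighborhood $U$ of $\overline{x}$ on which $f(x') \ge f(\overline{x})$ for all $x' \in U$. Hence for $x'$ near $\overline{x}$ we have $f(x') - f(\overline{x}) \ge 0 \ge o(\|x' - \overline{x}\|)$, so the defining inequality for $0 \in \widehat{\partial} f(\overline{x})$ holds trivially, giving $0 \in \widehat{\partial} f(\overline{x}) \subset \partial f(\overline{x})$. The argument is short and essentially a verification; the only mild subtlety, and the step I would treat most carefully, is confirming that $f(\overline{x})$ is finite (guaranteed since a minimizer lies in $\mathrm{dom} f$ for a proper $f$) so that the epigraphical point $(\overline{x}, f(\overline{x}))$ is well defined and the subdifferential is not vacuously empty. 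No genuine obstacle arises here: the Fermat rule is precisely the phenomenon the regular subdifferential was designed to capture, and the proof is a one-line consequence of the nonnegativity of $f(x') - f(\overline{x})$ near $\overline{x}$.
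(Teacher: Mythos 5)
Your proof is correct. The paper itself offers no argument for Lemma~\ref{Lemma2.8} --- it is listed among the ``well known'' preliminary lemmas and implicitly deferred to the cited references \cite{Mordukhovich2018, Rockafellar1998} --- and your argument is precisely the canonical one found there: local minimality gives $f(x') \ge f(\overline{x})$ near $\overline{x}$, hence $f(\overline{x}) - r' \le 0$ for all $(x',r') \in \mathrm{epi}\, f$ close to $(\overline{x}, f(\overline{x}))$, so $(0,-1) \in \widehat{N}_{\mathrm{epi} f}(\overline{x}, f(\overline{x}))$, i.e.\ $0 \in \widehat{\partial} f(\overline{x}) \subset \partial f(\overline{x})$ by Lemma~\ref{Lemma2.6}. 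Two cosmetic points only: the inequality ``$f(x') - f(\overline{x}) \ge 0 \ge o(\|x' - \overline{x}\|)$'' is loosely phrased, since $o(\cdot)$ denotes a class of remainders rather than a fixed function --- the clean statement is that $\liminf_{x' \to \overline{x}}\, \bigl(f(x') - f(\overline{x})\bigr)/\|x' - \overline{x}\| \ge 0$, which holds trivially here; and your remark on finiteness of $f(\overline{x})$ is the right convention to flag, since a ``local minimum'' of a proper function is understood to occur at a point of $\mathrm{dom}\, f$ (otherwise $\partial f(\overline{x}) = \emptyset$ by the paper's convention and the statement would be vacuous or false in a degenerate reading).
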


\begin{lemma}\label{Lemma2.9} 
Let $f_i \colon \mathbb{R}^n \to \overline{\mathbb{R}}$, $i=1,\dots,m$ with $m \geq 2$, be lower semi-continuous at $\overline{x} \in \mathbb{R}^n$  and let all but one of these functions be Lipschitz around $\overline{x}.$ Then the following inclusions hold:
\begin{eqnarray*}
\partial \left( f_1+\cdots+f_m\right)(\overline{x}) & \subset & \partial f_1(\overline{x})+\cdots+\partial f_m(\overline{x}), \\
\partial (\max f_i) (\overline{x}) & \subset & \left\{    \sum_{i \in I(\overline{x})} \lambda_i \partial f_i(\overline{x}) \mid \lambda_i \ge 0, \sum_{i \in I(\overline{x})} \lambda_i=1  \right\}, 
\end{eqnarray*}
where $I(\overline{x}):=\{ i \in \{1,\ldots,m\}\mid f_i(\overline{x})= \max_j f_j (\overline{x})\}.$
\end{lemma}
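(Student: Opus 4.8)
The plan is to prove the sum rule first and then obtain the maximum rule from it by passing through normal cones to epigraphs. Both inclusions have the form ``a limiting object is contained in a combination of limiting objects,'' so the uniform strategy is: represent the limiting subdifferential on the left-hand side as an outer limit of regular subdifferentials by Lemma~\ref{Lemma2.6}, apply an approximate (fuzzy) calculus rule at the nearby base points, and then pass to the limit, using the Lipschitz hypothesis to keep the auxiliary sequences bounded.

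\emph{Sum rule.} I would first reduce to $m=2$ by induction. Relabel so that $f_1$ is the (only possibly) non-Lipschitz function while $f_2,\dots,f_m$ are Lipschitz around $\overline{x}$; then $g:=f_2+\cdots+f_m$ is Lipschitz around $\overline{x}$, and it suffices to combine the two-function inclusion $\partial(f_1+g)(\overline{x})\subset\partial f_1(\overline{x})+\partial g(\overline{x})$ with $\partial g(\overline{x})\subset\partial f_2(\overline{x})+\cdots+\partial f_m(\overline{x})$, where the latter follows by applying the two-function rule to Lipschitz summands repeatedly. To prove the two-function case with $f_2$ being $L$-Lipschitz, take $u\in\partial(f_1+f_2)(\overline{x})$. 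By Lemma~\ref{Lemma2.6} there are $x_k\xrightarrow{f_1+f_2}\overline{x}$ and $u_k\in\widehat{\partial}(f_1+f_2)(x_k)$ with $u_k\to u$. The fuzzy sum rule for regular subdifferentials (see \cite{Mordukhovich2006,Mordukhovich2018}) applied at each $x_k$ produces $y_k,z_k\to\overline{x}$ with $f_1(y_k)\to f_1(\overline{x})$ and vectors $a_k\in\widehat{\partial}f_1(y_k)$, $b_k\in\widehat{\partial}f_2(z_k)$ with $\|u_k-a_k-b_k\|\to0$. Since $f_2$ is $L$-Lipschitz we have $\|b_k\|\le L$, so along a subsequence $b_k\to b$; continuity of $f_2$ and Lemma~\ref{Lemma2.6} give $b\in\partial f_2(\overline{x})$, and then $a_k\to u-b=:a$ with $a\in\partial f_1(\overline{x})$, the convergence $f_1(y_k)\to f_1(\overline{x})$ being exactly what places $a$ in $\partial f_1(\overline{x})$. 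Hence $u=a+b\in\partial f_1(\overline{x})+\partial f_2(\overline{x})$.

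\emph{Maximum rule.} Set $\alpha:=\max_i f_i(\overline{x})$ and $z:=(\overline{x},\alpha)$, and use $\mathrm{epi}(\max_i f_i)=\bigcap_{i=1}^m\mathrm{epi}f_i$. By definition $u\in\partial(\max_i f_i)(\overline{x})$ iff $(u,-1)\in N_{\cap_i\mathrm{epi}f_i}(z)$. The intersection rule for limiting normal cones---itself a consequence of the already proven sum rule applied to the indicator functions $\delta_{\mathrm{epi}f_i}$ through Lemma~\ref{Lemma2.7}---yields, under the normal qualification condition, $N_{\cap_i\mathrm{epi}f_i}(z)\subset\sum_{i=1}^m N_{\mathrm{epi}f_i}(z)$; the qualification condition holds because all but one $f_i$ is Lipschitz, so all but one of the cones $N_{\mathrm{epi}f_i}(z)$ has trivial horizontal part (equivalently $\partial^\infty f_i(\overline{x})=\{0\}$). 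Writing $(u,-1)=\sum_i(\xi_i,-\lambda_i)$ with $(\xi_i,-\lambda_i)\in N_{\mathrm{epi}f_i}(z)$ forces $\lambda_i\ge0$ and $\sum_i\lambda_i=1$. For $i\notin I(\overline{x})$ one has $f_i(\overline{x})<\alpha$, so $z$ is interior to $\mathrm{epi}f_i$ and the term vanishes; for $i\in I(\overline{x})$ the structure of $N_{\mathrm{epi}f_i}(\overline{x},f_i(\overline{x}))$ gives $\xi_i\in\lambda_i\partial f_i(\overline{x})$ when $\lambda_i>0$ and $\xi_i\in\partial^\infty f_i(\overline{x})=\{0\}$ when $\lambda_i=0$. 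Summing over the active indices produces $u\in\sum_{i\in I(\overline{x})}\lambda_i\partial f_i(\overline{x})$ with $\lambda_i\ge0$ and $\sum_{i\in I(\overline{x})}\lambda_i=1$, which is the asserted inclusion.

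\emph{Main obstacle.} The crux of the sum rule is the fuzzy sum rule together with the value control $f_1(y_k)\to f_1(\overline{x})$: it is this that makes the limit $a$ land in $\partial f_1(\overline{x})$ rather than in a larger outer limit, while the Lipschitz continuity of $f_2$ is precisely what guarantees that $\{b_k\}$ is bounded so that a subsequential limit exists. For the maximum rule the delicate points are the verification of the normal qualification condition for the epigraph intersection (again supplied by Lipschitzness via the triviality of the singular subdifferentials) and the correct extraction of the multipliers, in particular the identification of the active indices and the normalization $\sum_{i\in I(\overline{x})}\lambda_i=1$ dictated by the $-1$ in the second coordinate; the only case requiring extra care is when the non-Lipschitz function is inactive at $\overline{x}$ yet fails to be continuous there, which is handled by restricting attention to base points along which $\max_i f_i$ converges to $\alpha$.
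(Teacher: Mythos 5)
The paper gives no proof of Lemma~\ref{Lemma2.9}; it is recalled as a known result from the cited monographs (the sum rule is \cite[Theorem~2.19]{Mordukhovich2018}, the maximum rule is of the type in \cite[Theorem~4.10]{Mordukhovich2018}), so your argument must stand on its own. Your sum-rule half does: the reduction to $m=2$, the fuzzy (semi-Lipschitzian) sum rule, the bound $\|b_k\|\le L$ coming from Lipschitzness, and the value control are exactly the standard proof; the only implicit step is that $f_1(x_k)\to f_1(\overline{x})$, which follows from $(f_1+f_2)(x_k)\to(f_1+f_2)(\overline{x})$ and the continuity of $f_2$ near $\overline{x}$. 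One mis-attribution, though: the intersection rule for limiting normal cones is \emph{not} a consequence of this sum rule applied to the indicators $\delta_{\mathrm{epi}f_i}$, because indicator functions are never Lipschitz, so the hypothesis ``all but one Lipschitz'' fails as soon as two indicators are summed. You must invoke the intersection rule under the normal qualification condition directly (\cite[Theorem~2.16]{Mordukhovich2018}, proved via the extremal principle), on pain of circularity.

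The genuine gap is in the maximum rule, at exactly the two spots you flag but do not close. First, if the non-Lipschitz function, say $f_1$, is active and receives the multiplier $\lambda_1=0$, your step ``$\xi_i\in\partial^\infty f_i(\overline{x})=\{0\}$ when $\lambda_i=0$'' is unjustified for $i=1$: only the Lipschitz functions have trivial singular subdifferential. Your argument then yields only the weaker formula with $\lambda_i\circ\partial f_i(\overline{x})$ (singular subdifferential at zero multipliers), as in \cite[Theorem~4.10]{Mordukhovich2018} and the paper's proposition on maxima at infinity, not the stated plain convex combination. Second, if $f_1$ is inactive but discontinuous at $\overline{x}$, the point $z=(\overline{x},\alpha)$ need not be interior to $\mathrm{epi}f_1$, and no ``restriction to base points along which $\max_if_i$ converges to $\alpha$'' can rescue the argument, because the asserted inclusion is itself false in that case: take $n=1$, $f_1(0)=0$ and $f_1(x)=1/|x|$ for $x\neq0$ (lower semi-continuous), and $f_2\equiv1$ (Lipschitz). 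Then $h:=\max\{f_1,f_2\}$ satisfies $h(0)=1$ and $h(x)=1/|x|$ for $0<|x|\le1$, so the difference quotients blow up and no sequence $x_k\to0$, $x_k\neq 0$, has $h(x_k)\to h(0)$; hence $\partial h(0)=\widehat{\partial}h(0)=\mathbb{R}$, while $I(0)=\{2\}$ and the right-hand side is $\partial f_2(0)=\{0\}$. So the maximum rule in the stated form is only available when all $f_i$ are Lipschitz around $\overline{x}$ (the classical setting, and the only one in which the paper actually uses it, e.g.\ for maxima of affine functions), or for active non-Lipschitz data with the $\lambda\circ$ convention; as written, your proof of the second inclusion cannot be completed.
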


For a function $g \colon \mathbb{R}^n \to \mathbb{R}^m$ and a vector $u \in \mathbb{R}^m,$ we define the function
$\langle u, g \rangle \colon \mathbb{R}^n \to \mathbb{R}$ by $\langle u, g \rangle (x) := \langle u, g(x) \rangle$ for $x \in \mathbb{R}^n.$ 

\begin{lemma}\label{Lemma2.10}
Let $f \colon \mathbb{R}^m \to \overline{\mathbb{R}}$ be a lower semi-continuous function and $g \colon \mathbb{R}^n \to \mathbb{R}^m$ be  locally Lipschitz around $\overline{x} \in \mathbb{R}^n.$ For $\overline{y} := g(\overline{x}),$ impose the qualification condition
$$\Big[u \in \partial^{\infty}f (\overline{y}) \quad \textrm{ and } \quad 0 \in \partial \langle u, g \rangle (\overline{x}) \Big] \quad \implies \quad u = 0.$$
Then the following subdifferential chain rules hold
$$\partial (f \circ g)(\overline{x}) \subseteq \bigcup_{u \in \partial f(\overline{y})} \partial \langle u, g \rangle (\overline{x}), \quad \partial^{\infty} (f \circ g)(\overline{x}) \subseteq \bigcup_{u \in \partial^{\infty} f(\overline{y})} \partial \langle u, g \rangle (\overline{x}). $$
\end{lemma}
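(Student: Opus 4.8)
The plan is to recognize this as the standard Mordukhovich chain rule and to derive it by turning the composition into a \emph{preimage} of the epigraph of $f$ under a Lipschitz map, and then invoking the normal-cone preimage rule together with the coderivative scalarization formula. Throughout I would write $\overline{\alpha} := (f\circ g)(\overline{x}) = f(\overline{y})$.

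First I would perform an \textbf{epigraphical reduction}. Define $G \colon \mathbb{R}^n \times \mathbb{R} \to \mathbb{R}^m \times \mathbb{R}$ by $G(x,\alpha) := (g(x), \alpha)$; since $g$ is locally Lipschitz around $\overline{x}$, the map $G$ is locally Lipschitz around $(\overline{x},\overline{\alpha})$. A direct check from the definition of the epigraph gives the identity $\mathrm{epi}(f \circ g) = G^{-1}(\mathrm{epi}\, f)$, with $G(\overline{x},\overline{\alpha}) = (\overline{y},\overline{\alpha})$ the base point in $\mathrm{epi}\,f$. Thus both $\partial(f\circ g)(\overline{x})$ and $\partial^\infty(f\circ g)(\overline{x})$ are encoded in $N_{\mathrm{epi}(f\circ g)}(\overline{x},\overline{\alpha})$ through the defining relations $(u,-1)\in N$ and $(u,0)\in N$, and the whole problem reduces to transporting normals of $\mathrm{epi}\,f$ back through $G$.

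Next I would \textbf{compute the coderivative of $G$ by scalarization}. For $(u,\gamma)\in\mathbb{R}^m\times\mathbb{R}$ we have $\langle (u,\gamma), G(x,\alpha)\rangle = \langle u, g(x)\rangle + \gamma\alpha$, so the scalarization formula for coderivatives of Lipschitz maps yields $D^*G(\overline{x},\overline{\alpha})(u,\gamma) = \partial\langle u, g\rangle(\overline{x}) \times \{\gamma\}$. Applying the preimage rule for limiting normals under a Lipschitz map to $\mathrm{epi}(f\circ g) = G^{-1}(\mathrm{epi}\,f)$ then gives
$$N_{\mathrm{epi}(f\circ g)}(\overline{x},\overline{\alpha}) \ \subseteq\ D^*G(\overline{x},\overline{\alpha})\big( N_{\mathrm{epi}\,f}(\overline{y},\overline{\alpha}) \big),$$
provided its qualification condition $N_{\mathrm{epi}\,f}(\overline{y},\overline{\alpha}) \cap \mathrm{ker}\, D^*G(\overline{x},\overline{\alpha}) = \{0\}$ holds. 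By the formula just computed, $\mathrm{ker}\,D^*G(\overline{x},\overline{\alpha}) = \{(u,0) \mid 0\in\partial\langle u,g\rangle(\overline{x})\}$, while $(u,0)\in N_{\mathrm{epi}\,f}(\overline{y},\overline{\alpha})$ means exactly $u\in\partial^\infty f(\overline{y})$; hence this qualification condition is precisely the stated implication. Finally I would \textbf{read off the two slices}: taking the $\gamma = -1$ component of the displayed inclusion and using $(u,-1)\in N_{\mathrm{epi}\,f}\Leftrightarrow u\in\partial f(\overline{y})$ produces the limiting chain rule, while the $\gamma = 0$ component together with $(u,0)\in N_{\mathrm{epi}\,f}\Leftrightarrow u\in\partial^\infty f(\overline{y})$ produces the singular one.

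The hard part is not the bookkeeping but the two substantive nonsmooth-analysis inputs underpinning the middle step: the scalarization identity $D^*g(\overline{x})(u) = \partial\langle u,g\rangle(\overline{x})$ for Lipschitz $g$, and the preimage rule for limiting normal cones. Both rely essentially on the outer-limit construction of $N$ and on the Lipschitz hypothesis, which supplies the calmness/metric-regularity estimate that makes the preimage rule valid without an additional constraint qualification; these are exactly the places where one cannot shortcut through the elementary sum rule of Lemma~\ref{Lemma2.9}, since neither the indicator of $\mathrm{epi}\,f$ nor the function defining the composition is Lipschitz. Once those two facts are in hand, matching the qualification conditions and extracting the two subdifferentials is routine.
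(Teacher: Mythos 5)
Your proposal is correct, and there is nothing to compare it against inside the paper itself: Lemma~\ref{Lemma2.10} is stated in the preliminaries as a well-known result and is not proved there (it is the standard Mordukhovich chain rule, cf.\ Theorem~4.5 in Mordukhovich's 2018 book or Theorem~10.49 in Rockafellar--Wets). Your derivation is in fact the standard textbook route: the epigraphical identity $\mathrm{epi}(f\circ g)=G^{-1}(\mathrm{epi}\,f)$ with $G(x,\alpha)=(g(x),\alpha)$ is exact, the scalarization $D^*G(\overline{x},\overline{\alpha})(u,\gamma)=\partial\langle u,g\rangle(\overline{x})\times\{\gamma\}$ follows correctly from the separable-sum rule applied to $\langle u,g\rangle(x)+\gamma\alpha$ with the second summand smooth, and your computation of $\ker D^*G$ shows that the preimage-rule qualification $N_{\mathrm{epi}\,f}(\overline{y},\overline{\alpha})\cap\ker D^*G(\overline{x},\overline{\alpha})=\{0\}$ is literally the hypothesis of the lemma, since kernel elements are forced to have $\gamma=0$ and $(u,0)\in N_{\mathrm{epi}\,f}(\overline{y},f(\overline{y}))$ means $u\in\partial^{\infty}f(\overline{y})$. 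Reading off the $\gamma=-1$ and $\gamma=0$ slices is then legitimate because $\overline{\alpha}=(f\circ g)(\overline{x})=f(\overline{y})$, so the normals to $\mathrm{epi}\,f$ are taken at the correct point. You also correctly identify the two nontrivial inputs (coderivative scalarization for Lipschitz maps and the inverse-image rule for limiting normals), both of which are independent standard results, so no circularity arises; the only implicit housekeeping worth a word is local closedness of the epigraphs (from lower semicontinuity of $f$ and of $f\circ g$, the latter since $g$ is continuous), which your hypotheses supply.
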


Finally, we recall the Ekeland variational principle (see \cite{Ekeland1974, Ekeland1979}).

\begin{lemma}\label{Lemma2.11} 
Let $f \colon \mathbb{R}^n \to \overline{\mathbb{R}}$ be a proper, lower semi-continuous and bounded from below function. Let $\epsilon >0$ and $x_0 \in \mathbb{R}^n$ be given such that 
\begin{eqnarray*}
f (x_0) &\le& \inf_{x \in \mathbb{R}^n}f (x) + \epsilon. 
\end{eqnarray*}
Then, for any $\lambda >0$ there is a point $x_1 \in \mathbb{R}^n$ satisfying the following conditions
\begin{enumerate}[{\rm (i)}]
\item $f (x_1) \le f(x_0),$
\item $\|x_1-x_0\| \le \lambda,$ and
\item $f (x_1) \le  f(x)+\dfrac{\epsilon}{\lambda}\|x-x_1\|$ for all $x \in \mathbb{R}^n.$
\end{enumerate}
\end{lemma}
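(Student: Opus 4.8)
The plan is to give the classical proof via a partial order together with a nested-set (Cantor) argument, exploiting that $\mathbb{R}^n$ is complete and that $f$ is lower semi-continuous and bounded from below. Set $\alpha := \epsilon/\lambda$ and define a relation on $\mathrm{dom} f$ by declaring $x \preceq y$ whenever $f(x) + \alpha \|x - y\| \le f(y)$. A short computation using the triangle inequality shows that $\preceq$ is reflexive, antisymmetric, and transitive, hence a partial order; antisymmetry is exactly where the strict positivity $\alpha > 0$ is used. The whole argument then reduces to producing a $\preceq$-minimal point $x_1$ satisfying $x_1 \preceq x_0$. Indeed, $x_1 \preceq x_0$ immediately yields (i), and, combined with $f(x_1) \ge \inf f$ and the hypothesis $f(x_0) \le \inf f + \epsilon$, also yields (ii) via $\alpha \|x_1 - x_0\| \le f(x_0) - f(x_1) \le \epsilon$; meanwhile minimality of $x_1$ says precisely that no $x \ne x_1$ obeys $f(x) + \alpha\|x - x_1\| \le f(x_1)$, which rearranges into (iii).

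To build such an $x_1$ I would iterate. Starting from the given $x_0$, and having chosen $x_k$, set
$$S_k := \{ x \in \mathbb{R}^n \mid x \preceq x_k \} = \{ x \mid f(x) + \alpha\|x - x_k\| \le f(x_k) \}.$$
Each $S_k$ is nonempty (it contains $x_k$) and closed, because $x \mapsto f(x) + \alpha\|x - x_k\|$ is lower semi-continuous; moreover the sets are nested, $S_{k+1} \subseteq S_k$, by transitivity of $\preceq$. I would then pick $x_{k+1} \in S_k$ nearly minimizing $f$ over $S_k$, say with $f(x_{k+1}) \le \inf_{S_k} f + 2^{-(k+1)}$, which is legitimate since $f$ is bounded from below on $S_k$.

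The crucial estimate is that the diameters of the $S_k$ shrink to zero. For $x \in S_{k+1}$ one has $x \preceq x_{k+1}$ and $x \in S_k$, so $\alpha\|x - x_{k+1}\| \le f(x_{k+1}) - f(x) \le f(x_{k+1}) - \inf_{S_k} f \le 2^{-(k+1)}$, whence $\mathrm{diam}\, S_{k+1} \le 2^{-k}/\alpha \to 0$. By Cantor's nested-set theorem the intersection $\bigcap_k S_k$ is a single point $\{x_1\}$. Since $x_1 \in S_0$ we obtain $x_1 \preceq x_0$, and since $x_1 \in S_k$ for every $k$ we have $x_1 \preceq x_k$ for all $k$; consequently any $x \preceq x_1$ lies in every $S_k$ and must equal $x_1$, which is the desired minimality. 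Translating $x_1 \preceq x_0$ and the minimality back through the definition of $\preceq$, exactly as indicated in the first paragraph, delivers (i)--(iii).

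I expect the only genuine obstacle to be the diameter-shrinking estimate: it is what forces the specific near-minimization choice of $x_{k+1}$ and the use of transitivity, so that points of $S_{k+1}$ are controlled by the infimum of $f$ over the larger set $S_k$ rather than over $S_{k+1}$ itself. Everything else---the order axioms, closedness of the $S_k$ from lower semi-continuity, and the final rearrangements---is routine.
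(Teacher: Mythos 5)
Your proof is correct, but there is nothing in the paper to compare it against: the paper states this lemma (the Ekeland variational principle) without proof, citing Ekeland's 1974 and 1979 articles, so you have in effect reproved the cited result. Your argument is the classical one, and all its delicate points check out: the order $x \preceq y \iff f(x) + \alpha\|x-y\| \le f(y)$ is well defined on $\mathrm{dom} f$ because $\inf f$ is finite ($f$ proper and bounded below), which also forces $f(x_0) < \infty$ and hence $S_k \subseteq \mathrm{dom} f$ for every $k$, so the near-minimization $f(x_{k+1}) \le \inf_{S_k} f + 2^{-(k+1)}$ is legitimate; the sets $S_k$ are closed by lower semi-continuity of $x \mapsto f(x) + \alpha\|x - x_k\|$; your key diameter bound $\mathrm{diam}\, S_{k+1} \le 2^{-k}/\alpha$ is exactly right (each point of $S_{k+1}$ lies within $2^{-(k+1)}/\alpha$ of $x_{k+1}$, using $x \in S_k$ and the definition of $x_{k+1}$); and Cantor's theorem applies since $\mathbb{R}^n$ is complete. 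The translations to (i)--(iii) are also sound, including the fact that (iii) holds trivially for $x \notin \mathrm{dom} f$. Two small remarks: your minimality actually gives the strict inequality $f(x_1) < f(x) + \dfrac{\epsilon}{\lambda}\|x - x_1\|$ for all $x \ne x_1$, slightly stronger than (iii); and since you use only completeness of the ambient space, your argument proves the lemma in an arbitrary complete metric space, matching the generality of Ekeland's original statement rather than just the finite-dimensional setting needed here.
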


\section{Normal cones at infinity}\label{Section3}

In this section, let $\Omega$ be a locally closed subset of $\mathbb{R}^n$ and $I$ be a nonempty subset of $\{1, \ldots, n\}.$ Consider the projection $\pi \colon \mathbb{R}^n \to \mathbb{R}^{\#I}, x := (x_1, \ldots, x_n) \mapsto (x_i)_{i \in I}$ and assume that the set $\pi(\Omega)$ is unbounded. In what follows, the notation $ \pi(x) \xrightarrow{\Omega} \infty $ means that $x \in \Omega$ and $\pi(x) \to \infty$. 

\begin{definition}{\rm 
The {\em normal cone to the set $\Omega$ at infinity (with respect to the index set $I$)} is defined by
\begin{eqnarray*}
N_{\Omega}(\infty_I) &:=& \Limsup_{\pi(x) \xrightarrow{\Omega} \infty} \widehat{N}_{\Omega}(x).
\end{eqnarray*} 
When $I = \{1, \ldots, n\},$ we write $N_{\Omega}(\infty)$ instead of $N_{\Omega}(\infty_I).$
}\end{definition}

\begin{remark}{\rm 
For a function $f \colon \mathbb{R}^n \to \overline{\mathbb{R}},$ the definition of $\partial f(\infty),$ which relates to the variation of $f(x)$ as $x$ tends to infinity, will be given in terms of the normal cone to the epigraph of $f$ at infinity. Hence, we will be only interested in points $(x, y) \in \mathrm{epi} f \subset \mathbb{R}^n \times \mathbb{R}$ as $x$ goes to infinity, i.e., points in the epigraph of $f$ whose $i$-th coordinates tend to infinity for some  $i$ in the index set $\{1, \ldots, n\} \subset \{1, \ldots, n, n + 1\}.$ This explains why the index set $I$ appears in the above definition of tangent and normal cones at infinity.
}\end{remark}

By definition, it is not hard to see that $N_{\Omega}(\infty_I)$ is a (not necessarily convex) closed cone.
\begin{example}\label{Example3.3} {\rm 
(i) Let $\Omega :=\left\{(x_1,x_2) \in \mathbb{R}^2 \mid x_1 x_2 = 1 \right\}.$ For $x := (x_1,x_2) \in \Omega$, we have 
$$\widehat{N}_{\Omega}(x) = N_{\Omega}(x) = \{(- tx_1^{-2}, -t) \mid t \geq 0\}.$$ Therefore,
$$N_{\Omega}(\infty_I) = 
\begin{cases}
\{0\}\times (-\mathbb{R}_{+}) & \textrm{ if } I = \{1\}, \\
(-\mathbb{R}_{+})\times \{0\} & \textrm{ if } I = \{2\},\\
\{0\}\times (-\mathbb{R}_{+}) \cup (-\mathbb{R}_{+})\times \{0\}  & \textrm{ if } I = \{1,2\}. 
\end{cases}$$

(ii) Let $\Omega :=\{(x_1,x_2) \in \mathbb{R}^2 \mid x_2 \geq e^{x_1}\}.$ For $x :=  (x_1,x_2) \in \Omega$, we have 
$$\widehat{N}_{\Omega}(x) =N_{\Omega}(x) = \begin{cases}
\{(0,0)\} & \textrm{ if } x_2 >e^{x_1}, \\
\{(te^{x_1}, -t) \mid t \geq 0\} & \textrm{ if } x_2=e^{x_1}.
\end{cases}$$ Therefore,
$$N_{\Omega}(\infty_I) = 
\begin{cases}
\{0\}\times (-\mathbb{R}_{+}) \cup \mathbb{R}_{+}\times \{0\} & \textrm{ if } I = \{1\}, \\
\mathbb{R}_{+}\times \{0\} & \textrm{ if } I = \{2\},\\
\{0\}\times (-\mathbb{R}_{+}) \cup \mathbb{R}_{+}\times \{0\} & \textrm{ if } I = \{1,2\}. 
\end{cases}$$
}\end{example}

\begin{remark}{\rm 
Recently, the notions of {\em Clarke tangent and normal cones at infinity} were introduced in \cite{PHAMTS2023-4}. Namely, 
the {\em Clarke tangent cone of $\Omega$ at infinity (with respect to the index set $I$)}, denoted by $T_\Omega(\infty_I),$  is the set of all vectors $v \in \mathbb{R}^n$ such that, whenever we have sequences $x_k \in \Omega$ with $\pi(x_k) \to \infty$ and $t_k \searrow 0,$ there exists a sequence $v_k \to v$ such that $x_k + t_k v_k \in \Omega$ for all $k;$ by the {\em normal cone to $\Omega$ at infinity (with respect to the index set $I$)}, we mean the {\em polar} of $T_\Omega(\infty_I),$ i.e., 
\begin{eqnarray*}
N_\Omega(\infty_I) &:=& \{\xi  \in \mathbb{R}^n \ | \ \langle \xi, v \rangle \le 0 \quad \textrm{ for all } \quad v \in T_\Omega(\infty_I)\}.
\end{eqnarray*}
We note that the notions of  Clarke and limiting normal cones at infinity are different, as seen from Example~\ref{Example3.3}(ii) and \cite[Example~4.2(ii)]{PHAMTS2023-4}; also see Remark~\ref{remark4.7} below.
}\end{remark}

\begin{proposition}\label{Proposition3.5}
The following equalities hold
\begin{eqnarray}
N_{\Omega}(\infty_I) &=& \Limsup_{\pi(x) \xrightarrow{\Omega} \infty} N_{\Omega}(x), \label{Eqn1} \\
 &=& \Limsup_{\pi(x) \rightarrow \infty} \Big[\mathrm{cone} \big(x - \Pi_{\Omega}(x) \big) \Big]. \label{Eqn2} 
\end{eqnarray}
Recall that $\Pi_{\Omega}(x) := \{y \in \Omega \ | \  \|x - y \| = d_{\Omega}(x)\}.$
\end{proposition}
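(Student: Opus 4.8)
The plan is to prove the two equalities successively: the first, \eqref{Eqn1}, upgrades the regular normal cones in the definition of $N_\Omega(\infty_I)$ to limiting ones, and the second, \eqref{Eqn2}, then follows by feeding the proximal‑normal formula of Lemma~\ref{Lemma2.3} into \eqref{Eqn1}. The one elementary fact I will use throughout is that the coordinate projection $\pi$ is nonexpansive, $\|\pi(x)-\pi(x')\|\le\|x-x'\|$, so that a bounded (indeed a vanishing) perturbation of a point with $\pi(x)\to\infty$ again has $\pi\to\infty$; this is precisely what makes diagonal arguments compatible with the regime $\pi(x)\xrightarrow{\Omega}\infty$.

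For \eqref{Eqn1}, the inclusion $N_\Omega(\infty_I)\subset \Limsup_{\pi(x)\xrightarrow{\Omega}\infty}N_\Omega(x)$ is immediate from $\widehat N_\Omega(x)\subset N_\Omega(x)$. For the reverse inclusion I take $\xi$ in the right‑hand side, choose $x_k\in\Omega$ with $\pi(x_k)\to\infty$ and $\xi_k\in N_\Omega(x_k)$ with $\xi_k\to\xi$, and unfold each $\xi_k$ through the definition $N_\Omega(x_k)=\Limsup_{x'\xrightarrow{\Omega}x_k}\widehat N_\Omega(x')$ to obtain $x_{k,j}\in\Omega$ with $x_{k,j}\to x_k$ and $\xi_{k,j}\in\widehat N_\Omega(x_{k,j})$ with $\xi_{k,j}\to\xi_k$. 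A diagonal selection $j=j(k)$ with $\|x_{k,j(k)}-x_k\|<1/k$ and $\|\xi_{k,j(k)}-\xi_k\|<1/k$ then yields base points $\widetilde x_k:=x_{k,j(k)}\in\Omega$ with $\pi(\widetilde x_k)\to\infty$ (here the nonexpansiveness of $\pi$ enters) and regular normals $\widetilde\xi_k:=\xi_{k,j(k)}\in\widehat N_\Omega(\widetilde x_k)$ with $\widetilde\xi_k\to\xi$, whence $\xi\in N_\Omega(\infty_I)$.

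For the inclusion $\subset$ in \eqref{Eqn2} I start from \eqref{Eqn1}, substitute the formula $N_\Omega(x)=\Limsup_{x'\to x}\mathrm{cone}\big(x'-\Pi_\Omega(x')\big)$ of Lemma~\ref{Lemma2.3}, and run the same diagonal argument on the resulting iterated outer limit. The point worth recording is that the diagonal base points $x'$ are chosen within distance $1/k$ of points $x_k\in\Omega$, so they satisfy both $\pi(x')\to\infty$ and $d_\Omega(x')\to 0$; in particular they lie in the index set of the outer limit in \eqref{Eqn2}, and this direction goes through cleanly.

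The reverse inclusion of \eqref{Eqn2} is where the real work lies, and I expect it to be the \emph{main obstacle}. Given $\xi$ realised by $x_k$ with $\pi(x_k)\to\infty$ and $\eta_k\in\mathrm{cone}\big(x_k-\Pi_\Omega(x_k)\big)$, $\eta_k\to\xi$, write $\eta_k=t_k(x_k-y_k)$ with $t_k\ge 0$ and $y_k\in\Pi_\Omega(x_k)$. The projection inequality $\langle x_k-y_k,\,z-y_k\rangle\le\tfrac12\|z-y_k\|^2$ for $z\in\Omega$ (obtained by expanding $\|x_k-z\|^2\ge\|x_k-y_k\|^2$) shows $x_k-y_k\in\widehat N_\Omega(y_k)$, hence $\eta_k\in\widehat N_\Omega(y_k)$; so to conclude $\xi\in N_\Omega(\infty_I)$ it suffices to verify that the base points $y_k$ satisfy $\pi(y_k)\to\infty$. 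When $d_\Omega(x_k)=\|x_k-y_k\|$ stays bounded this is automatic, since $\|\pi(y_k)-\pi(x_k)\|\le\|y_k-x_k\|$ and $\pi(x_k)\to\infty$. The delicate situation is $d_\Omega(x_k)\to\infty$ (which forces $t_k\to0$, as $t_k\,d_\Omega(x_k)=\|\eta_k\|\to\|\xi\|$): here the nearest points $y_k$ may remain bounded, and one must show either that such sequences produce no limiting directions beyond those already in $N_\Omega(\infty_I)$, or that the outer limit in \eqref{Eqn2} is to be read along the points whose nearest‑point projections themselves escape to infinity. Pinning down this case — ruling out, or correctly accounting for, nearest points that stay bounded while $\pi(x_k)\to\infty$ — is the step I would spend the most effort on.
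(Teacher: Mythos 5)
Up to the point you flag, your argument reproduces the paper's proof: the paper proves \eqref{Eqn1} by exactly your diagonal extraction (unfolding each $\xi_k\in N_\Omega(x_k)$ into nearby regular normals at nearby points of $\Omega$), and proves the inclusion $\subset$ in \eqref{Eqn2} by the same second diagonal pass through Lemma~\ref{Lemma2.3}. The obstacle you isolate in the reverse inclusion of \eqref{Eqn2} is a genuine gap --- but it is a gap in the \emph{paper}, not a difficulty the paper resolves and you missed. The paper's proof of that direction simply asserts ``by Lemma~\ref{Lemma2.3} again, $\xi_k \in N_{\Omega}(x_k)$'' and then appeals to \eqref{Eqn1}; this is vacuous or false as written, since $N_\Omega(x_k)=\emptyset$ by the paper's convention when $x_k\notin\Omega$, while for $x_k\in\Omega$ one has $\mathrm{cone}\big(x_k-\Pi_\Omega(x_k)\big)=\{0\}$. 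The only substantive reading is the one you give, $\xi_k\in\widehat N_\Omega(y_k)$ at $y_k\in\Pi_\Omega(x_k)$, and then invoking \eqref{Eqn1} (or the definition) requires $\pi(y_k)\to\infty$, which neither you nor the paper establishes.

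It cannot be established: in the regime you single out ($d_\Omega(x_k)\to\infty$ with bounded nearest points) the stated equality \eqref{Eqn2} is actually false. Take $n=1$, $I=\{1\}$, $\Omega=(-\infty,0]$: then $\pi(\Omega)$ is unbounded and $\widehat N_\Omega(x)=\{0\}$ at every $x<0$, so $N_\Omega(\infty_I)=\{0\}$; yet for $x_k=k\to+\infty$ one has $\Pi_\Omega(x_k)=\{0\}$ and $\mathrm{cone}\big(x_k-\Pi_\Omega(x_k)\big)=\mathbb{R}_+$, so the right-hand side of \eqref{Eqn2} contains all of $\mathbb{R}_+$. Thus your first escape route (``such sequences produce no new limiting directions'') is closed, and your second is the correct repair: the outer limit in \eqref{Eqn2} must be restricted to points $x$ with $\pi(x)\to\infty$ and $d_\Omega(x)$ bounded along the realizing sequence (equivalently $d_\Omega(x)\to 0$, equivalently nearest points with $\pi(y)\to\infty$; all three give the same set). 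Under that reading your own argument is already a complete proof: the forward direction produces points with $d_\Omega(x_k')<1/k$, as you observed, and in the reverse direction $\|\pi(y_k)-\pi(x_k)\|\le\|y_k-x_k\|=d_\Omega(x_k)$ bounded forces $\pi(y_k)\to\infty$, after which $\eta_k\in\widehat N_\Omega(y_k)$ and $\eta_k\to\xi$ yield $\xi\in N_\Omega(\infty_I)$ directly from the definition, without passing through \eqref{Eqn1}. In short: you executed correctly everything the paper proves correctly, and stalled honestly exactly where the paper's proof and statement are defective.
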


\begin{proof}
To justify \eqref{Eqn1}, we note that $\widehat{N}_{\Omega}(x) \subset N_{\Omega}(x)$ for all $x \in \mathbb{R}^n.$ By definition, hence
\begin{eqnarray*}
N_{\Omega }(\infty_I)  &=& \Limsup_ {\pi(x) \xrightarrow{\Omega} \infty}  \widehat{N}_{\Omega}(x) \ \subset \ \Limsup_{\pi(x) \xrightarrow{\Omega} \infty}  N_{\Omega}(x).
\end{eqnarray*}
For the converse, take any $\xi \in \Limsup_{\pi(x) \xrightarrow{\Omega} \infty} N_{\Omega}(x);$ there exist sequences $x_k \in \Omega$ and $\xi_k \in N_{\Omega}(x_k) $ such that $\pi(x_k) \to \infty$ and $\xi_k \to \xi$. By definition,  for each $k > 0,$ there are $x_k' \in \Omega$ and $\xi_k' \in \widehat{N}_{\Omega}(x_k') $ such that $\|x_k' - x_k\| < \frac{1}{k}$ and $\|\xi_k' - \xi_k \| < \frac{1}{k}.$ Clearly, 
$\pi(x_k') \to \infty$ and $\xi_k' \to \xi$ as $k \to \infty.$ By definition, therefore $\xi \in {N}_{\Omega}(\infty_I).$

To verify~\eqref{Eqn2}, pick any $\xi \in N_{\Omega}(\infty_I).$ It follows from the equality~\eqref{Eqn1} that there exist sequences $x_k \in \Omega$ and $\xi_k \in N_{\Omega}(x_k) $ such that $\pi(x_k) \to \infty$ and $\xi_k \to \xi.$ Note that (see Lemma~\ref{Lemma2.3}) 
\begin{eqnarray*}
N_{\Omega}(x_k) &=& \Limsup_{x \rightarrow x_k} \Big[\mathrm{cone} \big(x - \Pi_{\Omega}(x) \big) \Big].
\end{eqnarray*}
Therefore, for each $k > 0$ there are  $x_k' \in \mathbb{R}^n,$ $y_k' \in \Pi_{\Omega}(x_k') $, and $t_k' \ge 0$ such that 
$$\|x_k' - x_k\| < \frac{1}{k} \quad \textrm{ and } \quad \| t_k'(x_k' - y_k') -\xi_k\| < \frac{1}{k},$$
which imply that $\pi(x_k') \to \infty$ and $t_k'(x_k' - y_k')\to \xi$ as $k \to \infty.$ By definition, we get $\xi \in \Limsup_{\pi(x) \rightarrow \infty} \Big[\mathrm{cone} \big(x - \Pi_{\Omega}(x) \big) \Big].$

To prove the opposite inclusion in~\eqref{Eqn2}, take any $\xi \in \Limsup_{\pi(x) \rightarrow \infty} \Big[\mathrm{cone} \big(x - \Pi_{\Omega}(x) \big) \Big]$. Then, there are sequences $x_k \in \mathbb{R}^n,$ $y_k \in \Pi_{\Omega}(x_k)$, and $t_k \ge 0$ such that $\pi(x_k) \to \infty$ and $\xi_k := t_k(x_k - y_k) \to \xi.$ By Lemma~\ref{Lemma2.3} again, $\xi_k \in N_{\Omega}(x_k)$. Letting $k \to \infty$ and  employing the equality~\eqref{Eqn1}, we obtain $\xi \in N_{\Omega}(\infty_I).$
\end{proof}

\begin{proposition}\label{Proposition3.6} 
The set $\pi(\partial \Omega)$ is unbounded if and only if $N_{\Omega}(\infty_{I})\neq \{0\}.$
\end{proposition}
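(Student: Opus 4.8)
The plan is to establish the two implications separately. Both rest on a single elementary fact about the regular normal cone: if $x \in \mathrm{int}\,\Omega$ then $\widehat{N}_\Omega(x) = \{0\}$, so that any $x \in \Omega$ carrying a nonzero regular normal must lie in $\Omega \setminus \mathrm{int}\,\Omega \subset \partial\Omega$; conversely, at a boundary point of $\Omega$ one can produce a nonzero normal by projecting a nearby outside point. Unwinding the definition $N_\Omega(\infty_I) = \Limsup_{\pi(x)\xrightarrow{\Omega}\infty}\widehat{N}_\Omega(x)$ then turns the geometric statement into a limiting statement about sequences.

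For the implication $N_\Omega(\infty_I) \neq \{0\} \Rightarrow \pi(\partial\Omega)$ unbounded, I would take a nonzero $\xi \in N_\Omega(\infty_I)$ and use the definition directly: there are $x_k \in \Omega$ with $\pi(x_k) \to \infty$ and $\xi_k \in \widehat{N}_\Omega(x_k)$, $\xi_k \to \xi$. Since $\xi \neq 0$ we have $\xi_k \neq 0$ for all large $k$; because $\widehat{N}_\Omega(x) = \{0\}$ whenever $x \in \mathrm{int}\,\Omega$, each such $x_k$ must lie in $\Omega \setminus \mathrm{int}\,\Omega \subset \partial\Omega$. Hence $\pi(x_k) \in \pi(\partial\Omega)$ with $\|\pi(x_k)\| \to \infty$, proving that $\pi(\partial\Omega)$ is unbounded.

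For the converse, assume $\pi(\partial\Omega)$ is unbounded and pick $z_k \in \partial\Omega$ with $\|\pi(z_k)\| \to \infty$. The idea is to manufacture a unit normal at infinity by projecting from outside. Since $z_k \in \mathrm{cl}(\mathbb{R}^n \setminus \Omega)$, choose $w_k \notin \Omega$ with $\|w_k - z_k\| < 1/k$, and pick $y_k \in \Pi_\Omega(w_k)$. Using $z_k \in \mathrm{cl}\,\Omega$ and the triangle inequality one gets $d_\Omega(w_k) < 2/k \to 0$, hence $\|y_k - w_k\| \to 0$ and, since $\|\pi(\cdot)\| \le \|\cdot\|$, also $\|\pi(y_k)\| \ge \|\pi(w_k)\| - \|w_k - y_k\| \to \infty$. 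The vector $\xi_k := (w_k - y_k)/\|w_k - y_k\|$ is a well-defined unit vector, and as $y_k$ is a nearest point of $\Omega$ to $w_k$ it is a proximal normal, so $\xi_k \in \widehat{N}_\Omega(y_k)$. By compactness of the unit sphere, along a subsequence $\xi_k \to \xi$ with $\|\xi\| = 1$; the definition of $N_\Omega(\infty_I)$ then yields $\xi \in N_\Omega(\infty_I) \setminus \{0\}$.

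I expect the converse to be the main obstacle, and specifically the existence and use of the projections $y_k$. When $\Omega$ is closed the projector $\Pi_\Omega(w_k)$ is automatically nonempty and the proximal-normal inclusion $w_k - y_k \in \widehat{N}_\Omega(y_k)$ is standard, so the argument goes through verbatim. Under the weaker hypothesis that $\Omega$ is only locally closed I would have to be careful: a boundary point $z_k$ need not belong to $\Omega$, and the global projector may be empty. I would circumvent this either by replacing $\Pi_\Omega$ with a local projection onto $\Omega \cap \mathbb{B}_r(y_k)$, valid because the minimizing points stay within a fixed small ball on which $\Omega$ is closed, or by invoking Proposition~\ref{Proposition3.5}, equation~\eqref{Eqn2}, which already packages $\mathrm{cone}(x - \Pi_\Omega(x))$ into the description of $N_\Omega(\infty_I)$ and lets me read off the nonzero limit directly. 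Verifying that this localization always produces a genuine (not merely approximate) nonzero regular normal at points of $\Omega$ arbitrarily far out is the one step demanding real care.
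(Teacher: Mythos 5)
Your proof is correct and its skeleton is the same as the paper's: both directions unwind $N_{\Omega}(\infty_I)$ as an outer limit of normal cones along sequences $x_k\in\Omega$ with $\pi(x_k)\to\infty$, and apply pointwise the correspondence ``nonzero normal $\Leftrightarrow$ boundary point.'' The genuine difference is in the key ingredient. The paper cites \cite[Proposition~1.2]{Mordukhovich2018} as a black box---at each boundary point $x_k\in\Omega$ there is a \emph{unit} vector $\xi_k\in N_{\Omega}(x_k)$---and then passes to a cluster point via the characterization $N_{\Omega}(\infty_I)=\Limsup_{\pi(x)\xrightarrow{\Omega}\infty}N_{\Omega}(x)$ of Proposition~\ref{Proposition3.5}; for the converse it invokes the same citation to place the $x_k$ in $\partial\Omega$. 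You instead re-prove the nontrivial half of that lemma by hand: the proximal-normal construction $\xi_k=(w_k-y_k)/\|w_k-y_k\|$ with $y_k\in\Pi_{\Omega}(w_k)$ is exactly how such results are established, the estimate $\langle w_k-y_k,x'-y_k\rangle\le\frac{1}{2}\|x'-y_k\|^2$ for $x'\in\Omega$ indeed puts $\xi_k$ in $\widehat{N}_{\Omega}(y_k)$, and your bookkeeping ($d_{\Omega}(w_k)\to0$, $\pi(y_k)\to\infty$ since $\pi$ is $1$-Lipschitz) is sound; your converse direction via triviality of $\widehat{N}_{\Omega}$ at interior points is the elementary half of the same citation. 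What your route buys is self-containedness (and your alternative fix---reading the unit vector off \eqref{Eqn2}---is the same projection argument repackaged through Proposition~\ref{Proposition3.5}); what the paper's buys is brevity. Finally, your worry about local closedness is well placed, and it in fact applies equally to the paper's own proof: Mordukhovich's Proposition~1.2 requires the boundary point to \emph{belong} to $\Omega$, which is automatic when $\Omega$ is closed but can fail for merely locally closed sets---an open half-plane is locally closed with unbounded topological boundary, yet $\widehat{N}_{\Omega}(x)=\{0\}$ at every $x\in\Omega$, so $N_{\Omega}(\infty)=\{0\}$. Under the implicit reading that the relevant boundary points lie in $\Omega$ (e.g.\ $\Omega$ closed), your localization fix works as you sketched: project $w_k$ onto $\Omega\cap\mathbb{B}_{r_k}(z_k)$ with $\|w_k-z_k\|$ small relative to $r_k$, so the minimizer stays in the interior of the ball and the regular-normal condition, being local, is unaffected.
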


\begin{proof}
{\em Necessity.} Assume that $\pi(\partial \Omega)$ is unbounded, i.e., there is a sequence $x_k \in \partial \Omega$ such that $\pi(x_k) \to \infty.$
In view of \cite[Proposition~1.2]{Mordukhovich2018}, for each $k$ there is a unit vector $\xi_k \in N_{\Omega}(x_k).$ The set of cluster points of the sequence $\xi_k$ is a nonempty subset of $\mathbb{S}^{n-1}$ and, by Proposition~\ref{Proposition3.5}, is contained in $N_{\Omega}(\infty_{I}).$
	
{\em Sufficiency.} Let $N_{\Omega}(\infty_{I})$ contain a unit vector $\xi.$
In view of Proposition~\ref{Proposition3.5}, there exist sequences $x_k \in \Omega$ with $\pi(x_k) \to \infty$ and $\xi_k \in N_{\Omega}(x_k)$ such that $\xi_k \to \xi$. Then $\xi_k \neq 0$ for all $k$ large enough. Applying \cite[Proposition~1.2]{Mordukhovich2018} again, we get $x_k \in \partial \Omega$ as required.
\end{proof}

\begin{proposition}
Let $\Omega_1, \Omega_2 \subset \mathbb{R}^n$ be locally closed sets satisfying the normal qualification condition at infinity
\begin{eqnarray*}
N_{\Omega_1}(\infty_I) \cap \big (-N_{\Omega_2}(\infty_I) \big) &=& \{0\}.
\end{eqnarray*}
Then
\begin{eqnarray*}
N_{\Omega_1 \cap \Omega_2}(\infty_I)  &\subset& N_{\Omega_1}(\infty_I) + N_{\Omega_2}(\infty_I).
\end{eqnarray*}
\end{proposition}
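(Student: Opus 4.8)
The plan is to reduce the intersection rule at infinity to the classical intersection rule for limiting normal cones, which holds under the pointwise normal qualification condition. The strategy rests on Proposition~\ref{Proposition3.5}, which lets me work with $N_\Omega(x)$ at finite points rather than the regular cone $\widehat N_\Omega(x)$. First I would take an arbitrary $\xi \in N_{\Omega_1 \cap \Omega_2}(\infty_I)$ and, using \eqref{Eqn1}, produce sequences $x_k \in \Omega_1 \cap \Omega_2$ with $\pi(x_k) \to \infty$ and $\xi_k \in N_{\Omega_1 \cap \Omega_2}(x_k)$ with $\xi_k \to \xi$. The goal is to decompose each $\xi_k$ (or a slightly perturbed version) as $\eta_k + \zeta_k$ with $\eta_k \in N_{\Omega_1}(\cdot)$ and $\zeta_k \in N_{\Omega_2}(\cdot)$, control the norms, and pass to the limit.

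The key step is applying the \emph{finite} intersection rule for limiting normal cones at each $x_k$. That rule requires the pointwise qualification condition $N_{\Omega_1}(x_k) \cap (-N_{\Omega_2}(x_k)) = \{0\}$, which is \emph{not} assumed; only the condition at infinity is given. So I would argue by contradiction that this pointwise condition holds for all $k$ sufficiently large. If it failed along a subsequence, there would be unit vectors $u_k \in N_{\Omega_1}(x_k) \cap (-N_{\Omega_2}(x_k))$; passing to a convergent subsequence $u_k \to u$ with $\|u\| = 1$, Proposition~\ref{Proposition3.5} (equation \eqref{Eqn1}) would give $u \in N_{\Omega_1}(\infty_I)$ and $-u \in N_{\Omega_2}(\infty_I)$, contradicting the normal qualification condition at infinity. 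Hence for large $k$ the finite rule applies and yields $\xi_k \in N_{\Omega_1}(x_k) + N_{\Omega_2}(x_k)$, say $\xi_k = \eta_k + \zeta_k$.

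The main obstacle is establishing that the decomposing sequences $\eta_k$ and $\zeta_k$ remain \emph{bounded}, so that after extracting subsequences they converge to limits $\eta, \zeta$ with $\eta + \zeta = \xi$. Boundedness does not follow automatically from $\xi_k = \eta_k + \zeta_k$ with $\xi_k$ bounded, since cancellation could occur. The standard device is again a contradiction argument: if $\|\eta_k\| + \|\zeta_k\| \to \infty$ along a subsequence, I would normalize, setting $\widetilde\eta_k := \eta_k/(\|\eta_k\|+\|\zeta_k\|)$ and $\widetilde\zeta_k := \zeta_k/(\|\eta_k\|+\|\zeta_k\|)$; since the normal cones are cones, these still lie in $N_{\Omega_1}(x_k)$ and $N_{\Omega_2}(x_k)$ respectively, with $\|\widetilde\eta_k\| + \|\widetilde\zeta_k\| = 1$ and $\widetilde\eta_k + \widetilde\zeta_k = \xi_k/(\|\eta_k\|+\|\zeta_k\|) \to 0$. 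Passing to convergent subsequences $\widetilde\eta_k \to \widetilde\eta$ and $\widetilde\zeta_k \to \widetilde\zeta = -\widetilde\eta$ with $\|\widetilde\eta\| + \|\widetilde\zeta\| = 1$ (so $\widetilde\eta \ne 0$), equation \eqref{Eqn1} places $\widetilde\eta \in N_{\Omega_1}(\infty_I)$ and $-\widetilde\eta = \widetilde\zeta \in N_{\Omega_2}(\infty_I)$, again contradicting the qualification condition at infinity.

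With boundedness secured, I would extract subsequences so that $\eta_k \to \eta$ and $\zeta_k \to \zeta$. One more application of \eqref{Eqn1} gives $\eta \in N_{\Omega_1}(\infty_I)$ and $\zeta \in N_{\Omega_2}(\infty_I)$, and taking the limit in $\xi_k = \eta_k + \zeta_k$ yields $\xi = \eta + \zeta \in N_{\Omega_1}(\infty_I) + N_{\Omega_2}(\infty_I)$. Since $\xi$ was arbitrary, this proves the claimed inclusion. The whole argument thus uses the qualification condition at infinity twice—once to license the finite intersection rule at each $x_k$, and once to force boundedness of the decomposition—both times converting an infinity-level condition into a finite-level conclusion via Proposition~\ref{Proposition3.5}.
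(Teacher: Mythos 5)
Your proposal is correct and follows essentially the same route as the paper's proof: first transfer the qualification condition at infinity to the pointwise condition $N_{\Omega_1}(x_k)\cap(-N_{\Omega_2}(x_k))=\{0\}$ for large $k$ by a normalization--contradiction argument, then apply the finite intersection rule to decompose $\xi_k = \eta_k + \zeta_k$, and finally rule out unboundedness of the decomposition by a second normalization that again contradicts the condition at infinity via Proposition~\ref{Proposition3.5}. The only cosmetic differences are that you normalize by $\|\eta_k\|+\|\zeta_k\|$ where the paper uses $\|v_k\|$, and that the paper establishes the pointwise condition uniformly for all $x$ with $\|\pi(x)\|\ge R$ rather than just along the chosen sequence.
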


\begin{proof}
We first show that there exists a constant $R > 0$ such that
\begin{eqnarray*}
N_{\Omega_1}(x) \cap \big(-N_{\Omega_2}(x) \big) &=& \{0\}
\end{eqnarray*}
for all $x \in \Omega_1 \cap \Omega_2$ with $\|\pi(x)\| \ge R.$ Indeed, if this were not true, there would exist sequences $x_k \in \Omega_1 \cap \Omega_2$ and $\xi_k \in N_{\Omega_1}(x_k) \cap (-N_{\Omega_2}(x_k))$ such that $\pi(x_k) \to \infty$ and $\xi_k \ne 0.$ Since $N_{\Omega_1}(x_k)$ and $N_{\Omega_2}(x_k)$ are cones, we can assume that $\|\xi_k\| = 1$ for all $k.$ Passing to a subsequence if necessary we may assume that the sequence $\xi_k$ converges to some $\xi.$ Clearly, $\|\xi \| = 1$ and  $\xi \in N_{\Omega_1}(\infty_I) \cap \big(-N_{\Omega_2}(\infty_I) \big),$ in contradiction to our assumption.

We now prove the desired inclusion. To this end, take any $\xi \in N_{\Omega_1 \cap \Omega_2}(\infty_I).$ By Proposition~\ref{Proposition3.5}, there exist
sequences $x_k \in \Omega_1 \cap \Omega_2$ and $\xi_k \in N_{\Omega_1 \cap \Omega_2}(x_k)$ such that $\pi(x_k) \to \infty$ and $\xi_k \to \xi.$ Then for all $k$ sufficiently large, we have
\begin{eqnarray*}
N_{\Omega_1}(x_k) \cap \big(-N_{\Omega_2}(x_k) \big) &=& \{0\}.
\end{eqnarray*}
It follows from \cite[Theorem~2.16]{Mordukhovich2018} that
\begin{eqnarray*}
N_{\Omega_1 \cap \Omega_2}(x_k)  &\subset& N_{\Omega_1}(x_k) + N_{\Omega_2}(x_k).
\end{eqnarray*}
Consequently, we can write $\xi_k = u_k + v_k$ for some $u_k \in N_{\Omega_1}(x_k)$ and $v_k \in N_{\Omega_2}(x_k).$ There are two cases to be considered.

\subsubsection*{Case 1: the sequence $u_k$ is bounded.} 
Then the sequence $v_k$ is bounded too. Passing to subsequences if necessary we may assume that $u_k$ and $v_k$ converge to some $u$ and $v,$ respectively. Clearly, $\xi = u + v,$ and by Proposition~\ref{Proposition3.5}, we have $u \in N_{\Omega_1}(\infty_I) $ and $v \in N_{\Omega_2}(\infty_I).$ 
Therefore $\xi  \in  N_{\Omega_1}(\infty_I)  +  N_{\Omega_2}(\infty_I).$

\subsubsection*{Case 2: the sequence $u_k$ is unbounded.} 
Then the sequence $v_k$ is unbounded too. Since $u_k + v_k$ is a convergent sequence, it is easy to see that the sequence $\frac{u_k}{\|v_k\|}$ is bounded. Passing to subsequences if necessary we may assume that the sequences $\frac{u_k}{\|v_k\|}$ and $\frac{v_k}{\|v_k\|}$ converge to some $u$ and $v,$ respectively. Clearly, $u \in  N_{\Omega_1}(\infty_I), v \in N_{\Omega_2}(\infty_I)$ and $u = - v \ne 0,$ in contradiction to our assumption.
\end{proof}

\begin{proposition}
Let $\Omega_1$ and $\Omega_2$ be locally closed sets in $\mathbb{R}^n$ and $\mathbb{R}^m,$ respectively.
For two nonempty index sets $I_1 \subset \{1, \ldots, n\}$ and $I_2 \subset \{1, \ldots, m\},$ we have 
\begin{eqnarray*}
N_{\Omega_1}(\infty_{I_1})\times N_{\Omega_2}(\infty_{I_2}) &\subset& N_{\Omega_1 \times \Omega_2}(\infty_{I}),
\end{eqnarray*}
where $I := I_1 \cup \{n + i  \mid i \in I_2\}\subset \{ 1, \ldots,n+m\}$. 
\end{proposition}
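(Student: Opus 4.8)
The plan is to derive this product inclusion from the elementary product formula for \emph{regular} normal cones at finite points, and then pass to the limit at infinity using the outer-limit definition of $N_{\Omega_1 \times \Omega_2}(\infty_I)$. Throughout I write $\pi_{I_1}$, $\pi_{I_2}$, $\pi_I$ for the coordinate projections associated with $I_1$, $I_2$, and $I$, respectively. The first step is to record the inclusion
\[
\widehat{N}_{\Omega_1}(a_1) \times \widehat{N}_{\Omega_2}(a_2) \ \subset \ \widehat{N}_{\Omega_1 \times \Omega_2}(a_1, a_2) \qquad \textrm{for every } (a_1, a_2) \in \Omega_1 \times \Omega_2.
\]
This follows directly from the defining inequality of the regular normal cone: if $\zeta_i \in \widehat{N}_{\Omega_i}(a_i)$ for $i = 1, 2$, then for $(x_1, x_2) \to (a_1, a_2)$ in $\Omega_1 \times \Omega_2$ the pairing $\langle (\zeta_1, \zeta_2), (x_1, x_2) - (a_1, a_2) \rangle$ splits as $\langle \zeta_1, x_1 - a_1 \rangle + \langle \zeta_2, x_2 - a_2 \rangle$, and since $\|x_i - a_i\| \le \|(x_1, x_2) - (a_1, a_2)\|$, the two $o(\cdot)$ estimates combine into the required estimate of order $o(\|(x_1, x_2) - (a_1, a_2)\|)$. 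Only this inclusion is needed (the reverse one, giving equality, is standard but unnecessary here).

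Next I would take an arbitrary pair $(\xi_1, \xi_2) \in N_{\Omega_1}(\infty_{I_1}) \times N_{\Omega_2}(\infty_{I_2})$. By the definition of the normal cone at infinity, for each $i \in \{1, 2\}$ I can choose sequences $x_k^i \in \Omega_i$ with $\pi_{I_i}(x_k^i) \to \infty$ and $\zeta_k^i \in \widehat{N}_{\Omega_i}(x_k^i)$ with $\zeta_k^i \to \xi_i$. Forming the product points $(x_k^1, x_k^2) \in \Omega_1 \times \Omega_2$, the inclusion established above gives $(\zeta_k^1, \zeta_k^2) \in \widehat{N}_{\Omega_1 \times \Omega_2}(x_k^1, x_k^2)$, and plainly $(\zeta_k^1, \zeta_k^2) \to (\xi_1, \xi_2)$.

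It then remains to verify that $\pi_I(x_k^1, x_k^2) \to \infty$. Since $I$ selects the $I_1$-coordinates of the first factor together with the $I_2$-coordinates of the second, one has the identification $\pi_I(x_k^1, x_k^2) = \big(\pi_{I_1}(x_k^1), \pi_{I_2}(x_k^2)\big)$, whence
\[
\|\pi_I(x_k^1, x_k^2)\|^2 = \|\pi_{I_1}(x_k^1)\|^2 + \|\pi_{I_2}(x_k^2)\|^2 \ \ge \ \|\pi_{I_1}(x_k^1)\|^2 \ \to \ \infty.
\]
Thus $\pi_I(x_k^1, x_k^2) \to \infty$, and the outer-limit definition of $N_{\Omega_1 \times \Omega_2}(\infty_I)$ yields $(\xi_1, \xi_2) \in N_{\Omega_1 \times \Omega_2}(\infty_I)$, as required. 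I do not expect a genuine obstacle in this argument: the product formula for regular normal cones is elementary, and the only point demanding care is the bookkeeping of the index set $I$ on the product space, namely identifying $\pi_I$ with the pair $(\pi_{I_1}, \pi_{I_2})$ and observing that divergence of a single coordinate block already forces $\pi_I$ to infinity.
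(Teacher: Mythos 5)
Your proof is correct, and it reaches the result by a slightly different (more self-contained) route than the paper. The paper's proof is two lines: it cites the exact product formula for \emph{limiting} normal cones at finite points, $N_{\Omega_1}(x_1)\times N_{\Omega_2}(x_2)=N_{\Omega_1\times\Omega_2}(x_1,x_2)$ (Mordukhovich's Proposition~1.4), and then passes to the limit via Proposition~\ref{Proposition3.5}, i.e.\ the representation of $N_\Omega(\infty_I)$ as $\Limsup N_\Omega(x)$. You instead stay at the level of the \emph{regular} normal cone: you prove the one-sided inclusion $\widehat{N}_{\Omega_1}(a_1)\times\widehat{N}_{\Omega_2}(a_2)\subset\widehat{N}_{\Omega_1\times\Omega_2}(a_1,a_2)$ directly from the defining $o(\cdot)$ inequality and then invoke only the original outer-limit definition of $N_{\Omega_1\times\Omega_2}(\infty_I)$, so neither the cited product equality nor Proposition~\ref{Proposition3.5} is needed. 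The skeleton is the same (finite-point product rule plus a sequential limit), but your version buys elementarity and correctly isolates that only the inclusion, not the equality, is required, while the paper's buys brevity by citation. Your bookkeeping of the index set is also right: $\pi_I(x^1,x^2)=\big(\pi_{I_1}(x^1),\pi_{I_2}(x^2)\big)$, and divergence of one block already forces $\pi_I\to\infty$; that last observation is in fact the mechanism behind the strictness of the inclusion in the paper's example with $\Omega_1=\Omega_2=\mathbb{R}_+$, where points of $\Omega_1\times\Omega_2$ can escape to infinity along a single factor.
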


\begin{proof}
In view of  \cite[Proposition~1.4]{Mordukhovich2018}, we have
\begin{eqnarray*}
N_{\Omega_1}(x_1) \times N_{\Omega_2}(x_2) &=& N_{\Omega_1 \times \Omega_2}(x_1, x_2) \quad \textrm{ for all } \quad (x_1, x_2) \in \Omega_1 \times \Omega_2,
\end{eqnarray*}
which, together with Proposition~\ref{Proposition3.5}, gives the desired inclusion.
\end{proof}

The inclusion in the above proposition may be strict. Indeed, let $n = m =1,$ $\Omega_1=\Omega_2=\mathbb{R}_{+}$, and $I_1=I_2=\{1\}, I = \{1, 2\}.$ Then it is not hard to check that
	\begin{eqnarray*}
	N_{\Omega_1}(\infty_{I_1})\times N_{\Omega_2}(\infty_{I_2})=\{(0,0)\} , \quad {\rm while} \quad  N_{\Omega_1 \times \Omega_2}(\infty_{I}) = \{0\}\times (-\mathbb{R}_{+}) \cup (-\mathbb{R}_{+})\times \{0\}.  
\end{eqnarray*}

\section{Subdifferentials at infinity} \label{Section4}

This section presents the notions of subdifferentials at infinity for extended real valued functions and then describes some of their fundamental properties.
Let $f \colon \mathbb{R}^n \to \overline{\mathbb{R}}$ be a lower semi-continuous function and let $I := \{1, \ldots, n\} \subset \{1, \ldots, n, n + 1\}.$ To avoid triviality, we will assume that $f$ is {\em proper at infinity} in the sense that the set $\mathrm{dom} f$ is unbounded.

\begin{definition}{\rm 
The {\em limiting and singular subdifferentials} of $f$ at infinity are defined by 
\begin{eqnarray*}
	\partial f(\infty) &:=& \{u \in \mathbb{R}^n \ | \ (u, -1) \in N_{\mathrm{epi} f}(\infty_{I})\},\\
	\partial^{\infty} f(\infty) &:=& \{u \in \mathbb{R}^n \ | \ (u, 0) \in N_{\mathrm{epi} f}(\infty_{I})\}. 
\end{eqnarray*}
}\end{definition}

It is easy to check that the limiting and singular subdifferentials at infinity are (not necessarily convex) closed sets. 
\begin{remark}\label{remark4.7}{\rm 
Recently, the notion of Clarke subdifferential at infinity was introduced in \cite{PHAMTS2023-4}. Namely, the {\em Clarke subdifferential of $f$ at infinity} is defined by
\begin{eqnarray*}
\partial^{\mathrm{Clarke}} f(\infty) &:=& \{u \in \mathbb{R}^n \ | \ (u, -1) \in N^{\mathrm{Clarke}}_{\textrm{epi} f}(\infty_I)\}. 
\end{eqnarray*} 
Notice that the Clarke and limiting subdifferentials at infinity may be different; for instance, for the function $f \colon \mathbb{R} \to \mathbb{R}, x \mapsto e^{x},$  we have
\begin{eqnarray*}
\partial^{\mathrm{Clarke}} f(\infty)\ = \mathbb{R}_+ & \supset & \partial f(\infty) \ = \ \{0\},
\end{eqnarray*}
 while for the function $f \colon \mathbb{R}^2 \to \mathbb{R}, (x_1, x_2) \mapsto x_1^3 + x_2,$ we have
 \begin{eqnarray*}
 \partial^{\mathrm{Clarke}} f(\infty)\ = \ \emptyset & \subset & \partial f(\infty) \ =\ \mathbb{R}_{+}\times \{1\}.
 \end{eqnarray*}
 Also note that if $f$ is {\em Lipschitz at infinity}, then $\partial^{\mathrm{Clarke}} f(\infty) = \mathrm{co} \, \partial f(\infty),$ as seen from  Proposition~\ref{Proposition5.6} below.
}\end{remark}

\begin{lemma}\label{Lemma4.3}
The following relationships hold
\begin{eqnarray*}
	\partial f(\infty) &=& \{u \in \mathbb{R}^n \ | \ (u, -1) \in \mathcal{N} \},\\
	\partial^{\infty} f(\infty) &=& \{u \in \mathbb{R}^n \ | \ (u, 0) \in \mathcal{N}\},
\end{eqnarray*}
where $\mathcal{N} := \displaystyle \Limsup_{x \to \infty} {N}_{\mathrm{epi} f}(x,f(x)).$
\end{lemma}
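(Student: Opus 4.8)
The plan is to read off both identities from a single set equality, comparing the slices of $\mathcal{N}$ and of $N_{\mathrm{epi} f}(\infty_I)$ at heights $\beta = -1$ and $\beta = 0$. Writing $\Omega := \mathrm{epi} f \subset \mathbb{R}^{n} \times \mathbb{R}$ and letting $\pi(x, r) = x$ be the projection onto the first $n$ coordinates (so $\pi(\Omega) = \mathrm{dom} f$ is unbounded, and $\pi(x,r) \to \infty$ means $\|x\| \to \infty$), the definitions unwind to
\[
\partial f(\infty) = \{u : (u, -1) \in N_\Omega(\infty_I)\}, \qquad \partial^\infty f(\infty) = \{u : (u, 0) \in N_\Omega(\infty_I)\},
\]
so it suffices to prove that, for $\beta \in \{-1, 0\}$, one has $(u, \beta) \in N_\Omega(\infty_I)$ if and only if $(u, \beta) \in \mathcal{N}$.

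One inclusion is immediate. By Proposition~\ref{Proposition3.5}, $N_\Omega(\infty_I) = \Limsup_{\pi(z) \xrightarrow{\Omega} \infty} N_\Omega(z)$, and since the graph points $(x, f(x))$ are particular epigraph points with $\pi(x, f(x)) = x$, every element of $\mathcal{N} = \Limsup_{x \to \infty} N_\Omega(x, f(x))$ already lies in $N_\Omega(\infty_I)$. Hence $\mathcal{N} \subseteq N_\Omega(\infty_I)$, which gives the inclusions ``$\supseteq$'' in both asserted identities.

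For the reverse inclusions I would fix $\beta \in \{-1, 0\}$ and take $(u, \beta) \in N_\Omega(\infty_I)$. Straight from the definition of the normal cone at infinity as a Painlev\'e--Kuratowski outer limit of \emph{regular} normal cones, there are $(x_k, r_k) \in \Omega$ with $\|x_k\| \to \infty$ and $(u_k, \beta_k) \in \widehat{N}_\Omega(x_k, r_k)$ with $(u_k, \beta_k) \to (u, \beta)$. Testing the defining inequality of $\widehat{N}_\Omega(x_k, r_k)$ against the upward vertical direction gives $\beta_k \le 0$; and if $r_k > f(x_k)$, testing also against the downward vertical direction (which stays in $\Omega$) forces $\beta_k = 0$. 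The heart of the argument is the following descent claim: if $r_k > f(x_k)$ and $(u_k, 0) \in \widehat{N}_\Omega(x_k, r_k)$, then already $(u_k, 0) \in \widehat{N}_\Omega(x_k, f(x_k))$. Granting this, in every case the vector $(u_k, \beta_k)$ is a regular normal of $\Omega$ at the \emph{graph} point $(x_k, f(x_k))$ --- trivially when $r_k = f(x_k)$, and by the claim (with $\beta_k = 0$) when $r_k > f(x_k)$. Passing to the limit then yields $(u, \beta) \in \Limsup_{x \to \infty} \widehat{N}_\Omega(x, f(x)) \subseteq \mathcal{N}$, as required.

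The main obstacle is thus the descent claim, which I would prove directly from the variational description of $\widehat{N}_\Omega$. Given a test point $(x', s) \in \Omega$ near $(x_k, f(x_k))$, one has $s < r_k$ once $(x', s)$ is close enough (since $f(x_k) < r_k$), whence $f(x') \le s < r_k$ and the lifted point $(x', r_k) \in \Omega$ lies near $(x_k, r_k)$. Applying the regular-normal inequality at $(x_k, r_k)$ to $(x', r_k)$ and using $\beta_k = 0$ gives $\langle u_k, x' - x_k \rangle \le o(\|x' - x_k\|)$; since $\|x' - x_k\| \le \|(x', s) - (x_k, f(x_k))\|$, this is exactly the estimate defining $(u_k, 0) \in \widehat{N}_\Omega(x_k, f(x_k))$. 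The points needing care are that lower semicontinuity of $f$ makes $\Omega$ closed, so that Proposition~\ref{Proposition3.5} applies, and that restricting to the heights $\beta \in \{-1, 0\}$ is what makes the dichotomy usable: at height $-1$ the convergence $\beta_k \to -1$ forces $r_k = f(x_k)$ for large $k$, so the base points lie on the graph automatically, while at height $0$ the descent claim absorbs the base points lying strictly above the graph.
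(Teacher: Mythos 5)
Your proof is correct, and its skeleton matches the paper's: the easy inclusion comes from Proposition~\ref{Proposition3.5} plus the fact that graph points are particular epigraph points, and the hard inclusion comes from descending regular normals at an epigraph point $(x_k,r_k)$ to the graph point $(x_k,f(x_k))$. The difference is in how the descent is carried out. The paper proves the uniform monotonicity $\widehat{N}_{\mathrm{epi} f}(x,r)\subset\widehat{N}_{\mathrm{epi} f}(x,f(x))$ for \emph{all} $r\ge f(x)$ in one stroke, from the contingent-cone inclusion $T_{\mathrm{epi} f}(x,f(x))\subset T_{\mathrm{epi} f}(x,r)$ (upward vertical invariance of the epigraph) together with the polarity formula of Lemma~\ref{Lemma2.3}; this treats all vertical components $\beta$ simultaneously and in fact establishes the full set equality $\mathcal{N}=N_{\mathrm{epi} f}(\infty_I)$, of which the lemma's two identities are the $\beta=-1$ and $\beta=0$ slices. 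You instead restrict attention to those two slices and supply the extra bookkeeping this forces: vertical test directions to get $\beta_k\le 0$ always and $\beta_k=0$ whenever $r_k>f(x_k)$ (so that $\beta=-1$ pushes the base points onto the graph for large $k$), plus a direct $\epsilon$--$\delta$ proof of the descent claim by lifting test points $(x',s)\mapsto(x',r_k)$ — and that lifting is the same vertical-invariance mechanism that drives the paper's tangent-cone inclusion, so your descent claim together with your horizontality observation recovers the paper's monotonicity for $r>f(x)$. What your route buys is self-containedness: it uses only the defining inequality of $\widehat{N}$, with no contingent cones or polarity, at the cost of a case analysis; the paper's route is shorter and delivers the stronger cone-level statement.
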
	
	
\begin{proof} 
By definition, it suffices to verify that 
\begin{eqnarray*}
\Limsup_{x \to \infty} {N}_{\mathrm{epi} f}(x,f(x)) &=& \Limsup_{x \to \infty, \, r \geq f(x)} \widehat{N}_{\textrm{epi} f}(x,r).
\end{eqnarray*}
The inclusion $\subset$ is clear because we know from Proposition~\ref{Proposition3.5} that
\begin{eqnarray*}
\Limsup_{x \to \infty, \, r \geq f(x)} {N}_{\textrm{epi} f}(x,r) &=& \Limsup_{x \to \infty, \, r \geq f(x)} \widehat{N}_{\textrm{epi} f}(x,r).
\end{eqnarray*}
For the inclusion $\supset,$ we observe that for all $x \in \mathrm{dom} f$ and $r \geq f(x),$
\begin{eqnarray*}
T_{\textrm{epi} f}(x,f(x)) &\subset& T_{\textrm{epi} f}(x, r),
\end{eqnarray*}
which, together with the first equality in Lemma~\ref{Lemma2.3}, yields
\begin{eqnarray*}
\widehat{N}_{\textrm{epi} f}(x,f(x))  &\supset&  \widehat{N}_{\textrm{epi} f}(x, r).
\end{eqnarray*}
Therefore
\begin{eqnarray*}
\Limsup_{x \to \infty} {N}_{\textrm{epi} f}(x,f(x)) 
&\supset& \Limsup_{x \to \infty} \widehat{N}_{\textrm{epi} f}(x,f(x))  \\
&\supset& \Limsup_{x \to \infty, \, r \geq f(x)} \widehat{N}_{\textrm{epi} f}(x, r),
\end{eqnarray*}
as required.
\end{proof}

The following result will play a crucial role in later developments.

\begin{proposition}\label{Proposition4.4} 
We have the relationships
\begin{eqnarray}
\partial f(\infty) &=& \Limsup_{x \to \infty} \partial f(x) \ =\ \Limsup_{x \to \infty} \widehat{\partial}f(x), \label{Eqn3} \\
\partial^{\infty} f(\infty)  &=& \Limsup_{x \to \infty, r \searrow 0} r \partial f(x) \ \supseteq \ \Limsup_{x \to \infty} \partial^{\infty} f(x). \label{Eqn4}
\end{eqnarray}
\end{proposition}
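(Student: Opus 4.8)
The plan is to derive everything from Lemma~\ref{Lemma4.3}, which exhibits $\partial f(\infty)$ and $\partial^{\infty} f(\infty)$ as the slices $\{u \mid (u,-1)\in \mathcal{N}\}$ and $\{u \mid (u,0)\in \mathcal{N}\}$ of the cone $\mathcal{N}=\Limsup_{x\to\infty}N_{\mathrm{epi} f}(x,f(x))$, and which (together with the definition) tells us that $\mathcal{N}=N_{\mathrm{epi} f}(\infty_I)$ is equally generated by the \emph{regular} normals $\widehat{N}_{\mathrm{epi} f}(x,r)$ taken along $x\to\infty,\ r\ge f(x)$. I will use three elementary facts about the closed (hence locally closed) set $\mathrm{epi} f$: since $(0,1)$ is always a tangent direction, every normal vector has non-positive last coordinate; if $r>f(x)$ then $(0,-1)$ is tangent too, so by Lemma~\ref{Lemma2.3} a normal with \emph{strictly} negative last coordinate forces $r=f(x)$; and both normal cones are cones, so a normal $(u,-\lambda)$ with $\lambda>0$ rescales to $(u/\lambda,-1)$.

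For \eqref{Eqn3} I would prove the cycle $\partial f(\infty)\subseteq\Limsup_{x\to\infty}\widehat{\partial} f(x)\subseteq\Limsup_{x\to\infty}\partial f(x)\subseteq\partial f(\infty)$. The last inclusion is immediate: if $u_k\in\partial f(x_k)$ with $x_k\to\infty$ and $u_k\to u$, then $(u_k,-1)\in N_{\mathrm{epi} f}(x_k,f(x_k))$ tends to $(u,-1)\in\mathcal{N}$, so $u\in\partial f(\infty)$; the middle inclusion is trivial from $\widehat{\partial} f\subseteq\partial f$. For the first inclusion I take $u\in\partial f(\infty)$ and write $(u,-1)$ as a limit of regular normals $(u_k,-\lambda_k)\in\widehat{N}_{\mathrm{epi} f}(x_k,r_k)$ with $x_k\to\infty$ and $\lambda_k\to1$; since $\lambda_k>0$ eventually, the facts above force $r_k=f(x_k)$ and hence $u_k/\lambda_k\in\widehat{\partial} f(x_k)$, and $u_k/\lambda_k\to u$ gives $u\in\Limsup_{x\to\infty}\widehat{\partial} f(x)$.

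For \eqref{Eqn4} the two easy containments come first. If $u_k\in\partial^{\infty}f(x_k)$ with $x_k\to\infty$, $u_k\to u$, then $(u_k,0)\to(u,0)\in\mathcal{N}$, giving $\Limsup_{x\to\infty}\partial^{\infty}f(x)\subseteq\partial^{\infty}f(\infty)$. Conversely, if $x_k\to\infty$, $r_k\searrow0$ and $w_k\in\partial f(x_k)$ with $r_kw_k\to u$, then $r_k(w_k,-1)=(r_kw_k,-r_k)\in N_{\mathrm{epi} f}(x_k,f(x_k))$ tends to $(u,0)\in\mathcal{N}$, so $\Limsup_{x\to\infty,\,r\searrow0}r\partial f(x)\subseteq\partial^{\infty}f(\infty)$. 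It remains to prove the reverse inclusion $\partial^{\infty}f(\infty)\subseteq\Limsup_{x\to\infty,\,r\searrow0}r\partial f(x)$. Given $(u,0)\in\mathcal{N}$, I choose $x_k\to\infty$ and $(u_k,-\lambda_k)\in N_{\mathrm{epi} f}(x_k,f(x_k))$ with $(u_k,-\lambda_k)\to(u,0)$, so $\lambda_k\ge0$ and $\lambda_k\to0$, and pass to a subsequence on which either $\lambda_k>0$ throughout or $\lambda_k=0$ throughout. When $\lambda_k>0$, rescaling yields $w_k:=u_k/\lambda_k\in\partial f(x_k)$ with $\lambda_k w_k=u_k\to u$ and $\lambda_k\searrow0$, which is exactly the required form.

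The subcase $\lambda_k=0$ is the crux: there $u_k\in\partial^{\infty}f(x_k)$, and I must approximate each $u_k$ by scaled \emph{ordinary} subgradients at points receding to infinity. For this I would invoke the finite-point identity $\partial^{\infty}f(x)=\Limsup_{x'\xrightarrow{f}x,\ \lambda\searrow0}\lambda\widehat{\partial} f(x')$ from \cite{Mordukhovich2006, Rockafellar1998}, which for each fixed $k$ produces $x_k^{j}\xrightarrow{f}x_k$, $\mu_k^{j}\searrow0$ and $v_k^{j}\in\widehat{\partial} f(x_k^{j})$ with $\mu_k^{j}v_k^{j}\to u_k$ as $j\to\infty$. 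A diagonal choice $j=j(k)$ with $\|x_k^{j(k)}-x_k\|<1/k$, $\mu_k^{j(k)}<1/k$ and $\|\mu_k^{j(k)}v_k^{j(k)}-u_k\|<1/k$ then gives $\tilde x_k:=x_k^{j(k)}$ with $\|\tilde x_k\|\to\infty$, scalars $\tilde\mu_k\searrow0$ and $\tilde v_k\in\widehat{\partial} f(\tilde x_k)\subseteq\partial f(\tilde x_k)$ with $\tilde\mu_k\tilde v_k\to u$, whence $u\in\Limsup_{x\to\infty,\,r\searrow0}r\partial f(x)$. I expect this subcase to be the main obstacle: the strictly-negative-last-coordinate cases are routine once the three facts about $\mathrm{epi} f$ are in hand, whereas here one must import the finite-point singular-subdifferential formula and perform the diagonalization that converts the double limit ``$x'\to x_k$ with $x_k\to\infty$'' into a single sequence tending to infinity.
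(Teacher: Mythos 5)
Your proposal is correct and takes essentially the same route as the paper: the identical three-inclusion cycle for \eqref{Eqn3} (your tangent-direction argument forcing $r_k=f(x_k)$ is just a self-contained version of the paper's reduction to graph points via Lemma~\ref{Lemma4.3}), and for \eqref{Eqn4} the same rescaling, the same dichotomy on whether the last coordinates vanish, and the same diagonalization through the finite-point identity $\partial^{\infty}f(x)=\Limsup_{x' \xrightarrow{f} x,\,\lambda \searrow 0}\lambda\widehat{\partial}f(x')$, which the paper attributes (somewhat loosely) to Lemma~\ref{Lemma2.6} while you cite the standard sources directly. The only organizational difference is that you get the final inclusion of \eqref{Eqn4} for free from the established equality, whereas the paper re-runs the approximation argument; both are fine.
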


\begin{proof}
To prove \eqref{Eqn3}, we need to show that
$$\Limsup_{x \to \infty} \widehat{\partial}f(x) \ \subset \ \Limsup_{x \to \infty} \partial f(x) \ \subset \ \partial f(\infty) \ \subset \ \Limsup_{x \to \infty} \widehat{\partial}f(x).$$
The first inclusion is clear. For the second inclusion, let $u \in \Limsup_{x \to \infty} \partial f(x).$ Then there are sequences $x_k \in \mathbb{R}^n$ and $u_k \in \partial f(x_k)$ satisfying $x_k \to \infty$ and $u_k \to u.$ Consequently, $(u_k,-1) \in N_{\textrm{epi}f}(x_k,f(x_k))$ and $(u_k, -1) \to (u, -1).$ By Lemma~\ref{Lemma4.3}, then $u \in  \partial f(\infty).$

For the third inclusion, take any $u \in \partial f(\infty).$ Applying Lemma~\ref{Lemma4.3}, we get sequences $x_k \in \mathbb{R}^n$ and $(u_k, -r_k) \in N_{\textrm{epi} f}(x_k,f(x_k)) $ such that $x_k \to \infty$ and $(u_k, -r_k) \to (u, -1).$ By the definition of normal cones, there are $x_k' \in \mathbb{R}^n$ and $(u_k', -r_k') \in \widehat{N}_{\textrm{epi} f}(x_k', f(x_k')) $ such that
\begin{eqnarray*}
\|x_k' - x_k\| < \frac{1}{k} \quad \textrm{ and } \quad \| (u_k', -r_k') - (u_k, -r_k) \| < \frac{1}{k}.
\end{eqnarray*}
Hence, $x_k' \to \infty$ and $(u_k', -r_k') \to (u, -1).$ For all $k$ sufficiently large, we have $r_k' > 0$ and so $\left(\dfrac{1}{r_k'}u_k', -1\right) \in \widehat{N}_{\textrm{epi} f}(x_k', f(x_k')),$ which yields $\dfrac{1}{r_k'} u_k '\in \widehat{\partial} f(x_k').$ Therefore 
$$u = \lim_{k \to \infty} \dfrac{1}{r_k'} u_k'  \in \Limsup_{x \to \infty} \widehat{\partial} f(x).$$

We next verify the equality in~\eqref{Eqn4}. Let $u \in \partial^{\infty} f(\infty).$ By Lemma~\ref{Lemma4.3}, there are sequences $x_k \to \infty$ and $(u_k, -r_k) \in N_{\textrm{epi} f}(x_k, f(x_k)) $ such that $(u_k, -r_k) \to (u, 0).$ Note that $r_k \ge 0$ (see \cite[Exercise~6.19]{Rockafellar1998}). Let $K:=\{ k \in \mathbb{N} \mid r_k = 0\}$. We have the following two cases:

\subsubsection*{Case 1: the set $K$ is finite} 
In this case, for all $k$ sufficiently large, we have $r_k > 0$ and so $\left(\dfrac{1}{r_k}u_k, -1 \right) \in N_{\textrm{epi} f}(x_k,f(x_k)).$ By definition, $\dfrac{1}{r_k}u_k \in \partial f(x_k).$ Therefore,
$$u = \lim_{k \to \infty} u_k = \lim_{k \to \infty} r_k \left(\dfrac{1}{r_k}u_k \right) \in \Limsup_{x \to \infty, r \searrow 0} r \partial f(x).$$ 

\subsubsection*{Case 2: the set $K$ is infinite} 
By passing to subsequences if necessary, we can suppose that $r_k = 0$ for all $k.$ By definition, $u_k \in \partial^{\infty} f(x_k)$ for all $k.$ It follows from Lemma~\ref{Lemma2.6} that for each $k > 0,$ there are $x_k' \in \mathbb{R}^n,$ $u_k' \in \widehat{\partial} f(x_{k}') \subseteq \partial f(x_{k}')$ and $0 < r_k' < \frac{1}{k}$ such that
\begin{eqnarray*}
\|x_k' - x_k\| < \frac{1}{k} \quad \textrm{ and } \quad \|r_k' u_k' - u_k\| < \frac{1}{k}.
\end{eqnarray*}
Therefore, $x_k' \to \infty$ and $r_k' u_k' \to u,$ which yield $u \in \Limsup_{x \to \infty, r \searrow 0} r \partial f(x).$

Conversely, take any $u \in \Limsup_{x \to \infty, r \searrow 0} r \partial f(x).$ There exist sequences $x_k \to \infty$, $r_k \searrow 0$, and $u_k \in \partial f(x_k)$ such that $r_k u_k \to u.$ We have $(u_k,-1)\in N_{\textrm{epi}f}(x_k,f(x_k)) $ and so $(r_k u_k, -r_k) \in N_{\textrm{epi}f}(x_k,f(x_k)).$ Since $(r_k u_k, -r_k) \to (u, 0)$ as $k \to \infty,$ we deduce from Lemma~\ref{Lemma4.3} that $u \in \partial^{\infty} f(\infty).$

Finally, for the inclusion in \eqref{Eqn4}, pick any $u \in \Limsup_{x \to \infty} \partial^{\infty} f(x)$. There are sequences $x_k \in \mathbb{R}^n$ and $u_k \in \partial^{\infty} f(x_k)$ such that $x_k \to \infty$ and $u_k \to u.$ It follows from Lemma~\ref{Lemma2.6} that for each $k > 0,$ there are $x_k' \in \mathbb{R}^n,$ $u_k' \in \widehat{\partial} f(x_{k}') \subseteq \partial f(x_{k}')$ and $0 < r_k' < \frac{1}{k}$ such that
\begin{eqnarray*}
\|x_k' - x_k\| < \frac{1}{k} \quad \textrm{ and } \quad \|r_k' u_k' - u_k\| < \frac{1}{k}.
\end{eqnarray*}
Therefore, $x_k' \to \infty$ and $r_k' u_k' \to u,$ which yield $u \in \Limsup_{x \to \infty, r \searrow 0} r \partial f(x).$
\end{proof}

\begin{example}\label{Example4.5}{\rm 
Consider an unbounded closed set $\Omega \subset \mathbb{R}^n.$ Then the indicator function $\delta_{\Omega}$ is lower semi-continuous. Moreover, in view of  Lemma~\ref{Lemma2.7}, we have for all $x \in \Omega$ that
$$\partial \delta_{\Omega} (x) =\partial^{\infty} \delta_{\Omega} (x)= N_{\Omega}(x).$$
It follows from Proposition~\ref{Proposition4.4} that
$$\partial \delta_{\Omega} (\infty)=\partial^{\infty} \delta_{\Omega} (\infty)=N_{\Omega}(\infty).$$
}\end{example}

\begin{remark}{\rm 
(i) In view of Proposition~\ref{Proposition4.4}, we have for any $\lambda > 0,$
\begin{align*}
\partial (\lambda f)(\infty) &= \Limsup_{x \to \infty} \partial (\lambda f)(x)= \Limsup_{x \to \infty} \lambda \partial f(x)= \lambda \Limsup_{x \to \infty} \partial f(x)=\lambda \partial f(\infty), \\
\partial^{\infty} (\lambda f)(\infty) &= \Limsup_{x \to \infty, r \searrow 0} r \partial (\lambda f)(x)= \Limsup_{x \to \infty, r \searrow 0} r \lambda \partial f(x)= \Limsup_{x \to \infty, r \searrow 0} r\partial f(x)=\partial^{\infty} f(\infty). 
\end{align*}
			
(ii) The inclusion in \eqref{Eqn4} may be strict. For instance, consider the smooth function $f \colon \mathbb{R} \to \mathbb{R}, x \mapsto x^3.$ We have for all $x\in \mathbb{R},$ 
$$\partial f(x) = \{3x^2\} \quad \textrm{ and } \quad \partial^{\infty} f(x)=\{0\} .$$ 
It follows from Proposition~\ref{Proposition4.4} that
$$\Limsup_{x \to \infty} \partial^{\infty} f(x) = \{0\} \quad \textrm{ and } \quad  \partial^{\infty} f(\infty) = \mathbb{R}_{+}.$$ 
Therefore, 
$\Limsup_{x \to \infty} \partial^{\infty} f(x)$ is a proper subset of $\partial^{\infty} f(\infty).$

(iii) The next example show that the limiting subdifferential at infinity of a smooth function may not be a singleton set and that the limiting subdifferential at infinity of a convex function is not necessarily a convex set.
}\end{remark}

\begin{example}\label{Example4.7}{\rm
(i) Consider the smooth function $f \colon \mathbb{R}^2 \to \mathbb{R}$,  $(x_1,x_2)\mapsto x_1^3+x_2$. We have $\partial f(x_1,x_2)=\{(3x_1^2,1)\}$ for all $(x_1,x_2)\in \mathbb{R}^2$. By virtue of Proposition~\ref{Proposition4.4}, $\partial f(\infty)=\mathbb{R}_{+}\times \{1\}$, which is not a singleton set.

(ii) Consider the convex function $f \colon \mathbb{R} \to \mathbb{R}$,  $x\mapsto f(x)=|x|$. We have $\partial f(x) = \{\mathrm{sign} \, x\}$ for all $x \ne 0.$ In view of Proposition~\ref{Proposition4.4}, $\partial f(\infty)=\{-1,1\}$, which is not a convex set.
}\end{example}

A crucial advantage of subdifferentials at infinity is the following non-emptiness property.

\begin{proposition}\label{Proposition4.8} 
We have
$$\partial f(\infty) \cup \left( \partial^{\infty} f(\infty) \setminus\{0\} \right) \neq \emptyset.$$	
\end{proposition}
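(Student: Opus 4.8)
The plan is to exploit the fact that every boundary point of a closed set carries a nonzero limiting normal, together with a compactness argument on the unit sphere of $\mathbb{R}^{n+1}$. First, since $f$ is proper at infinity, its domain is unbounded, so I can select a sequence $x_k \in \mathrm{dom} f$ with $x_k \to \infty$ (recall that $I = \{1, \ldots, n\}$, so $\pi$ is the identity and this just means $\|x_k\| \to \infty$). Because $f$ is lower semi-continuous, the epigraph $\mathrm{epi} f$ is closed, and each $(x_k, f(x_k))$ lies on its boundary: any point $(x_k, f(x_k) - \varepsilon)$ with $\varepsilon > 0$ fails to belong to $\mathrm{epi} f$. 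Hence by \cite[Proposition~1.2]{Mordukhovich2018} the cone $N_{\mathrm{epi} f}(x_k, f(x_k))$ contains a unit vector, which I write as $(u_k, -r_k)$ with $\|(u_k, -r_k)\| = 1$; moreover $r_k \ge 0$ by \cite[Exercise~6.19]{Rockafellar1998}.

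The next step is to pass to the limit at infinity. The normalized vectors $(u_k, -r_k)$ lie on the unit sphere, so after extracting a subsequence they converge to some $(u, -r)$ with $\|(u, -r)\| = 1$ and $r \ge 0$. Since $x_k \to \infty$ and $(u_k, -r_k) \in N_{\mathrm{epi} f}(x_k, f(x_k))$, Lemma~\ref{Lemma4.3} places $(u, -r)$ in the outer limit $\mathcal{N} = \Limsup_{x \to \infty} N_{\mathrm{epi} f}(x, f(x))$. I would also record that $\mathcal{N}$ is a cone: if $\xi_k \in N_{\mathrm{epi} f}(x_k, f(x_k))$ converge to $\xi$, then for any $\lambda > 0$ the vectors $\lambda \xi_k$ remain in these (conic) normal cones and converge to $\lambda \xi$, so $\lambda \xi \in \mathcal{N}$.

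The conclusion then splits according to the sign of $r$. If $r > 0$, conicity of $\mathcal{N}$ gives $(u/r, -1) = \tfrac{1}{r}(u, -r) \in \mathcal{N}$, so by Lemma~\ref{Lemma4.3} we have $u/r \in \partial f(\infty)$, whence $\partial f(\infty) \ne \emptyset$. If $r = 0$, then $\|u\| = \|(u, 0)\| = 1$, so $u \ne 0$, and since $(u, 0) \in \mathcal{N}$, Lemma~\ref{Lemma4.3} yields $u \in \partial^{\infty} f(\infty)$, hence $u \in \partial^{\infty} f(\infty) \setminus \{0\}$. In either case the union $\partial f(\infty) \cup (\partial^{\infty} f(\infty) \setminus \{0\})$ is nonempty.

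The individual steps are routine, and I regard the normalization together with the dichotomy on $r$ as the crux: without rescaling each normal vector to unit length one cannot guarantee a nonzero limit, and the statement is precisely engineered to absorb the two ways a unit limit can occur — a genuine subgradient when the vertical component survives ($r > 0$) versus a horizontal, singular direction when it collapses ($r = 0$). The only other point to verify carefully is that $(x_k, f(x_k))$ is truly a boundary point of $\mathrm{epi} f$, which is what makes \cite[Proposition~1.2]{Mordukhovich2018} applicable and produces a nonzero normal in the first place.
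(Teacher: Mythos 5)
Your proof is correct and takes essentially the same route as the paper: the paper simply applies its Proposition~\ref{Proposition3.6} to $\Omega = \mathrm{epi} f$ (whose proof is exactly your argument---boundary points escaping to infinity, unit normals from \cite[Proposition~1.2]{Mordukhovich2018}, compactness of the sphere) to get $N_{\mathrm{epi} f}(\infty_I) \neq \{0\}$, and then declares the conclusion ``obvious from the definitions.'' You have merely inlined that proposition and made explicit the details the paper glosses over---that $(x_k, f(x_k))$ is a boundary point of $\mathrm{epi} f$, that $r_k \ge 0$, and the normalization-plus-dichotomy on the vertical component---all of which check out.
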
 

\begin{proof}
The epigraph of $f$ is closed because $f$ is lower semi-continuous. Moreover, $\pi(\partial\, {\mathrm{epi} f})$ is unbounded as ${\textrm{dom} f}$ is unbounded (by our assumption). Applying Proposition~\ref{Proposition3.6}, we obtain $N_{{\textrm{epi} f}}(\infty_{I})\neq \{0\}.$ 
The desired claim is obvious then from the definitions of limiting and singular subdifferentials at infinity.
\end{proof}

\begin{proposition}[sum rule] \label{Proposition4.9}
Let $f_1, f_2 \colon \mathbb{R}^n \to \overline{\mathbb{R}}$ be lower semi-continuous functions satisfying the (singular subdifferential) qualification condition at infinity
\begin{eqnarray}\label{Eqn5}
\partial^{\infty} f_1(\infty) \cap \big(- \partial^{\infty} f_2(\infty) \big) &=& \{0\}.
\end{eqnarray}
Then we have the relationships
\begin{eqnarray*}
\partial (f_1 + f_2)(\infty) &\subset& \partial f_1(\infty) + \partial  f_2(\infty), \\
\partial^{\infty} (f_1 + f_2)(\infty) &\subset& \partial^{\infty} f_1(\infty) + \partial^{\infty} f_2(\infty).   
\end{eqnarray*}
\end{proposition}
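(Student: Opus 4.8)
The plan is to mirror the strategy used in the proof of the preceding intersection rule: first upgrade the qualification condition \eqref{Eqn5} to a \emph{uniform} pointwise qualification condition valid on a neighborhood of infinity, then invoke the classical Mordukhovich sum rule pointwise and pass to the limit, splitting the analysis into a bounded and an unbounded case. Throughout I will use freely the representations of $\partial f(\infty)$ and $\partial^\infty f(\infty)$ from Proposition~\ref{Proposition4.4}.

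First I would establish that there exists $R > 0$ such that
$$\partial^{\infty} f_1(x) \cap \big(-\partial^{\infty} f_2(x)\big) = \{0\} \qquad \text{for all } x \text{ with } \|x\| \ge R.$$
Arguing by contradiction, if no such $R$ exists there are $x_k \to \infty$ and $\xi_k \neq 0$ with $\xi_k \in \partial^{\infty} f_1(x_k) \cap (-\partial^{\infty} f_2(x_k))$. Since the singular subdifferentials are cones, I normalize to $\|\xi_k\| = 1$ and pass to a subsequence with $\xi_k \to \xi$, $\|\xi\| = 1$. The inclusion $\Limsup_{x \to \infty} \partial^{\infty} f(x) \subseteq \partial^{\infty} f(\infty)$ from \eqref{Eqn4} then forces $\xi \in \partial^{\infty} f_1(\infty)$ and $-\xi \in \partial^{\infty} f_2(\infty)$, contradicting \eqref{Eqn5}.

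For the first inclusion, I take $\xi \in \partial(f_1 + f_2)(\infty)$ and use \eqref{Eqn3} to get $x_k \to \infty$ and $\xi_k \in \partial(f_1+f_2)(x_k)$ with $\xi_k \to \xi$. For $k$ large we have $\|x_k\| \ge R$, so the pointwise limiting sum rule (see, e.g., \cite[Theorem~2.19]{Mordukhovich2018}) applies and yields a decomposition $\xi_k = u_k + v_k$ with $u_k \in \partial f_1(x_k)$ and $v_k \in \partial f_2(x_k)$. If $\{u_k\}$ is bounded, then so is $\{v_k\}$; passing to subsequences gives $u_k \to u$, $v_k \to v$, and \eqref{Eqn3} yields $u \in \partial f_1(\infty)$, $v \in \partial f_2(\infty)$ with $\xi = u + v$. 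If instead $\{u_k\}$ is unbounded, I set $r_k := 1/\|u_k\| \searrow 0$; since $u_k + v_k \to \xi$ is bounded, a subsequence of $r_k u_k$ converges to some unit vector $u$ and then $r_k v_k \to -u$. Because $r_k u_k \in r_k \partial f_1(x_k)$ and $r_k v_k \in r_k \partial f_2(x_k)$, the representation in \eqref{Eqn4} gives $u \in \partial^{\infty} f_1(\infty)$ and $-u \in \partial^{\infty} f_2(\infty)$, contradicting \eqref{Eqn5}; hence the unbounded case is vacuous, and the first inclusion follows. The second inclusion is handled analogously, but starting from $\partial^{\infty}(f_1+f_2)(\infty) = \Limsup_{x \to \infty,\, s \searrow 0} s\,\partial(f_1+f_2)(x)$: given $\xi$ in this set one obtains $x_k \to \infty$, $s_k \searrow 0$, and $\xi_k \in \partial(f_1+f_2)(x_k)$ with $s_k \xi_k \to \xi$, decomposes $\xi_k = u_k + v_k$ by the same pointwise sum rule, and studies the dichotomy for $\{s_k u_k\}$ — the bounded case delivering $\xi \in \partial^{\infty} f_1(\infty) + \partial^{\infty} f_2(\infty)$ directly via \eqref{Eqn4}, and the unbounded case producing, after rescaling by $\|s_k u_k\|$, a unit vector in $\partial^{\infty} f_1(\infty) \cap (-\partial^{\infty} f_2(\infty))$, which is again excluded by \eqref{Eqn5}.

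The main obstacle I expect lies in the unbounded case of each inclusion: one must choose the rescaling factors so that the rescaled subgradients land in the \emph{correct} outer limit from Proposition~\ref{Proposition4.4} — namely the singular representation $\Limsup_{x\to\infty,\,r\searrow 0} r\,\partial f(x)$ rather than $\Limsup_{x\to\infty}\partial f(x)$ — and verify that the scaling parameter genuinely tends to $0$ (this reduces in both inclusions to checking $\|u_k\| \to \infty$, which follows from boundedness of $\xi$-side together with $s_k \searrow 0$). The qualification condition \eqref{Eqn5} is precisely what rules out these runaway decompositions, so the uniform version of it proved in the first step is the engine of the whole argument.
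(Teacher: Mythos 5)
Your proof is correct and follows essentially the same route as the paper's: first upgrading \eqref{Eqn5} to a uniform pointwise qualification condition near infinity via normalization and the inclusion in \eqref{Eqn4}, then applying the pointwise Mordukhovich sum rule along a sequence and splitting into the bounded case (limits land in $\partial f_i(\infty)$ by \eqref{Eqn3}) and the unbounded case (rescaled limits land in $\partial^{\infty} f_i(\infty)$ by the equality in \eqref{Eqn4}, contradicting \eqref{Eqn5}); your normalization by $\|u_k\|$ rather than $\|v_k\|$ is immaterial. You even correctly work out the second inclusion via the representation $\partial^{\infty}(f_1+f_2)(\infty)=\Limsup_{x\to\infty,\,s\searrow 0} s\,\partial(f_1+f_2)(x)$, which the paper omits as ``similar.''
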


\begin{proof}
We first show that there exists a constant $R > 0$ such that
\begin{eqnarray}\label{Eqn6}
\partial^{\infty} f_1(x) \cap \big(- \partial^{\infty} f_2(x) \big) &=& \{0\} \quad \textrm{ when } \quad \|x\| > R.
\end{eqnarray}
Indeed, if this were not true, there would exist sequences $x_k \in \mathbb{R}^n$ and $w_k \in \partial^{\infty} f_1(x_k) \cap \big(-\partial^{\infty} f_2(x_k) \big)$ such that $x_k \to \infty$ and $w_k \ne 0.$ Replacing $w_k$ by $\frac{w_k}{\|w_k\|}$ we can assume that $\|w_k\| = 1$ for all $k.$ Then passing to a subsequence if necessary we may assume that the sequence $w_k$ converges to some $w.$ Clearly, $\|w \| = 1;$ moreover, in view of Proposition~\ref{Proposition4.4}, $w \in \partial^{\infty} f_1(\infty) \cap \big(- \partial^{\infty} f_2(\infty) \big),$ which contradicts our assumption.

We now prove the first inclusion; the proof of the second one is similar and so is omitted.
Take any $w \in \partial (f_1 + f_2)(\infty).$ By Proposition~\ref{Proposition4.4}, there exist
sequences $x_k \in \mathbb{R}^n$ and $w_k \in \partial (f_1 + f_2)(x_k)$ such that $x_k \to \infty$ and $w_k \to w.$ Putting 
the relation~\eqref{Eqn6} together with \cite[Theorem~2.19]{Mordukhovich2018}, we can write $w_k = u_k + v_k$ for some $u_k \in \partial f_1(x_k)$ and $v_k \in \partial f_2(x_k).$ There are two cases to be considered.

\subsubsection*{Case 1: the sequence $u_k$ is bounded.} 
Then the sequence $v_k$ is bounded too. Passing to subsequences if necessary we may assume that $u_k$ and $v_k$ converge to some $u$ and $v,$ respectively. By Proposition~\ref{Proposition4.4}, $u \in \partial f_1(\infty) $ and $v \in \partial f_2(\infty).$ Therefore $w  = u + v \in  \partial f_1(\infty) + \partial  f_2(\infty).$

\subsubsection*{Case 2: the sequence $u_k$ is unbounded.} 
Then the sequence $v_k$ is unbounded too. Since $u_k + v_k$ is a convergent sequence, it is easy to see that the sequence $\frac{u_k}{\|v_k\|}$ is bounded. Passing to subsequences if necessary we may assume that the sequences $\frac{u_k}{\|v_k\|}$ and $\frac{v_k}{\|v_k\|}$ converge to some $u$ and $v,$ respectively. Clearly, $\|v\| = 1$ and $u + v = 0;$ moreover, by Proposition~\ref{Proposition4.4}, $u \in \partial^\infty f_1(\infty) $ and $v \in \partial^\infty f_2(\infty).$ These contradict the condition~\eqref{Eqn5}.
\end{proof}

\begin{remark}{\rm 
(i) The condition~\eqref{Eqn5} in Proposition~\ref{Proposition4.9} can not be removed. Indeed, consider the functions $f_1(x) :=-x^2$ and $f_2(x) :=x^2.$ By Proposition~\ref{Proposition4.4}, it is easy to check that 
$$\partial^\infty f_1(\infty) = \partial^\infty f_2(\infty) = \mathbb{R},$$
and so the condition~\eqref{Eqn5} fails to hold. On the other hand, we have $\partial (f_1 + f_2)(\infty)=\{0\},$ while $\partial f_1(\infty) =	\partial f_2(\infty)=\emptyset.$ 

(ii) The condition~\eqref{Eqn5} holds automatically when either $f_1$ or $f_2$ is {\em Lipschitz at infinity}, see Proposition~\ref{Proposition5.2} below.
}\end{remark}

\begin{proposition}[subdifferentiation of maximum functions]
Let $f_1, f_2 \colon \mathbb{R}^n \to \overline{\mathbb{R}}$ be lower semi-continuous functions satisfying the qualification condition~\eqref{Eqn5}. Then the following inclusions are valid
\begin{eqnarray*}
\partial \left( {\rm max}\{f_1,f_2\}\right)(\infty) & \subset & \bigcup_{
\lambda_{1}, \lambda_2 \in [0,1],  \ \lambda_{1}+\lambda_{2} = 1} \{ \lambda_1 \circ \partial f_1(\infty) + \lambda_2 \circ \partial f_2 (\infty)\},\\
\partial^{\infty} \left( {\rm max}\{f_1,f_2\}\right)(\infty) & \subset &  \partial^{\infty} f_1(\infty) + \partial^{\infty} f_2 (\infty),
\end{eqnarray*}
where $\lambda \circ \partial f(\infty)$ is defined as $\lambda \partial f(\infty)$ if $\lambda>0$ and as $\partial^{\infty} f(\infty)$ if $\lambda=0.$
\end{proposition}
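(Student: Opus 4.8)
The plan is to avoid a direct pointwise maximum rule and instead reduce everything to the normal-cone intersection rule proved in Section~\ref{Section3}, by means of the elementary epigraphical identity
\[
\mathrm{epi}\,(\max\{f_1,f_2\}) \ = \ \mathrm{epi}\, f_1 \cap \mathrm{epi}\, f_2.
\]
First I would record the harmless reductions: $\max\{f_1,f_2\}$ is again lower semi-continuous, so its epigraph is closed, and I may assume $\mathrm{dom}(\max\{f_1,f_2\}) = \mathrm{dom} f_1 \cap \mathrm{dom} f_2$ is unbounded, for otherwise the left-hand sides are empty and there is nothing to prove. Working in $\mathbb{R}^{n+1}$ with $I = \{1,\dots,n\}$ and $\pi$ the projection onto the first $n$ coordinates, both $\partial(\max\{f_1,f_2\})(\infty)$ and $\partial^{\infty}(\max\{f_1,f_2\})(\infty)$ are, by definition, prescribed sections of $N_{\mathrm{epi} f_1 \cap \mathrm{epi} f_2}(\infty_I)$.

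The second step is to check that the scalar qualification condition~\eqref{Eqn5} is precisely the hypothesis needed to apply the intersection rule, namely $N_{\mathrm{epi} f_1}(\infty_I) \cap (-N_{\mathrm{epi} f_2}(\infty_I)) = \{0\}$. The mechanism here is the sign of the last coordinate: every normal to an epigraph has nonpositive final component (this passes to the limit from \cite[Exercise~6.19]{Rockafellar1998}, just as in the proof of Proposition~\ref{Proposition4.4}). Consequently a common vector $(u,-\mu)$ in the intersection must satisfy $\mu \ge 0$ (from the first cone) and $\mu \le 0$ (from the second), hence $\mu = 0$, and the definition of the singular subdifferentials identifies it with an element of $\partial^{\infty} f_1(\infty) \cap (-\partial^{\infty} f_2(\infty)) = \{0\}$.

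With the qualification condition verified, I would invoke the intersection rule with $\Omega_1 = \mathrm{epi} f_1$ and $\Omega_2 = \mathrm{epi} f_2$ to obtain
\[
N_{\mathrm{epi}(\max\{f_1,f_2\})}(\infty_I) \ \subset \ N_{\mathrm{epi} f_1}(\infty_I) + N_{\mathrm{epi} f_2}(\infty_I),
\]
and then translate back. For the limiting inclusion I take $(u,-1)$ in the left-hand cone and split it as $(u_1,-\mu_1) + (u_2,-\mu_2)$ with $(u_i,-\mu_i) \in N_{\mathrm{epi} f_i}(\infty_I)$, so that $\mu_i \ge 0$ and $\mu_1 + \mu_2 = 1$. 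The cone (positive-homogeneity) property of $N_{\mathrm{epi} f_i}(\infty_I)$ then does the bookkeeping: when $\mu_i > 0$, rescaling gives $u_i \in \mu_i\,\partial f_i(\infty)$, while when $\mu_i = 0$ the definition gives $u_i \in \partial^{\infty} f_i(\infty)$; setting $\lambda_i := \mu_i$ reproduces exactly the $\circ$-convention. For the singular inclusion, the same splitting applied to $(u,0)$ forces $\mu_1 + \mu_2 = 0$, whence $\mu_1 = \mu_2 = 0$ and $u \in \partial^{\infty} f_1(\infty) + \partial^{\infty} f_2(\infty)$.

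I expect the only delicate point to be the sign analysis in the second step: one must be sure that the nonpositivity of the final coordinate genuinely survives the Painlev\'e--Kuratowski outer limit defining $N_{\mathrm{epi} f_i}(\infty_I)$, and then match this against~\eqref{Eqn5} to pin $\mu$ to zero. Once this is secured, the rest is a direct consequence of the homogeneity of the normal cones and the defining relations of $\partial f_i(\infty)$ and $\partial^{\infty} f_i(\infty)$; in particular, no separate treatment of bounded versus unbounded subgradient sequences is needed, in contrast with the more laborious route through a pointwise maximum rule combined with Proposition~\ref{Proposition4.4}.
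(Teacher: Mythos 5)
Your proof is correct, and it takes a genuinely different route from the paper's. The paper argues at the level of subdifferentials: it uses the sequential characterization of Proposition~\ref{Proposition4.4} to extract sequences $x_k \to \infty$ and $w_k \in \partial(\max\{f_1,f_2\})(x_k)$, derives the pointwise qualification $\partial^{\infty} f_1(x_k) \cap (-\partial^{\infty} f_2(x_k)) = \{0\}$ from \eqref{Eqn5} exactly as in Proposition~\ref{Proposition4.9}, applies the pointwise maximum rule \cite[Theorem~4.10]{Mordukhovich2018} at each $x_k$, and then passes to the limit through a three-case analysis on the limiting multipliers ($\lambda_1 \in (0,1)$, $\lambda_1 = 0$, $\lambda_1 = 1$), with subcases according to whether $\lambda_{1k}$ vanishes infinitely often and boundedness arguments for the subgradient sequences. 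You instead work geometrically with $\mathrm{epi}\,(\max\{f_1,f_2\}) = \mathrm{epi}\, f_1 \cap \mathrm{epi}\, f_2$ and the intersection rule for normal cones at infinity from Section~\ref{Section3}, and each of your steps checks out: epigraphs of lower semi-continuous functions are closed; the sign fact that normals to an epigraph have nonpositive last coordinate (since $(0,1)$ is always a tangent direction, cf. \cite[Exercise~6.19]{Rockafellar1998}) survives both outer limits because the condition $\xi_{n+1} \le 0$ is closed, so \eqref{Eqn5} really does force the vertical component of a common normal to vanish and yields $N_{\mathrm{epi} f_1}(\infty_I) \cap (-N_{\mathrm{epi} f_2}(\infty_I)) = \{0\}$; your reduction to the case where $\mathrm{dom}\, f_1 \cap \mathrm{dom}\, f_2$ is unbounded legitimizes the application of the intersection rule (and in the degenerate case both left-hand sides are indeed empty); and since $\partial f_i(\infty)$ and $\partial^{\infty} f_i(\infty)$ are by definition the sections of $N_{\mathrm{epi} f_i}(\infty_I)$ at heights $-1$ and $0$, positive homogeneity of these cones turns the splitting $-1 = -\mu_1 - \mu_2$, $\mu_i \ge 0$, into exactly the $\lambda \circ$ convention, while $0 = -\mu_1 - \mu_2$ forces both components to be singular. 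What your route buys: both inclusions fall out of the single cone inclusion $N_{\mathrm{epi}\max}(\infty_I) \subset N_{\mathrm{epi} f_1}(\infty_I) + N_{\mathrm{epi} f_2}(\infty_I)$, the sequence bookkeeping is quarantined inside the already-proved intersection rule, and the $\circ$-convention acquires a transparent geometric meaning as the decomposition of the vertical coordinate. What the paper's route buys: it runs parallel to its sum-rule proof, requires no epigraphical geometry beyond the definitions, and adapts directly to the maximum of $m$ functions via the pointwise rule, whereas your approach would have to iterate the binary intersection rule (re-verifying the qualification at each stage).
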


\begin{proof} 
We confine ourselves to proving the first inclusion; the proof of the second inclusion proceeds on the same lines. 

Pick any $w \in \partial \left( {\rm max}\{f_1,f_2\}\right)(\infty).$ By Proposition~\ref{Proposition4.4}, there exist sequences $x_k \in \mathbb{R}^n$ and $w_k \in \partial \left( {\rm max}\{f_1,f_2\}\right)(x_k)$ such that $x_k \to \infty$ and $w_k \to w$. As in the proof of Proposition~\ref{Proposition4.9}, the qualification condition~\eqref{Eqn5} gives for all $k$ large enough,
\begin{eqnarray*}
\partial^{\infty} f_1(x_k) \cap \big(- \partial^{\infty} f_2(x_k) \big) &=& \{0\},
\end{eqnarray*}
which, together with \cite[Theorem~4.10]{Mordukhovich2018}, yields
$$w_k \in \lambda_{1k} \circ \partial f_1(x_k) + \lambda_{2k} \circ \partial f_2 (x_k),$$
for some $\lambda_{1k}, \lambda_{2k} \in [0,1]$ with $\lambda_{1k}+\lambda_{2k}=1.$ Passing to subsequences if necessary, we can assume that the sequences $\lambda_{1k}$ and $\lambda_{2k}$ converge to $\lambda_1$ and $\lambda_2.$ Clearly, $\lambda_1, \lambda_2 \in [0, 1]$ and $\lambda_1 + \lambda_2 = 1.$ There are three cases to be considered.

\subsubsection*{Case 1: $\lambda_1 \in (0,1)$.} Then $\lambda_{2} \in (0, 1).$ For all $k$ sufficiently large, we have 
$\lambda_{1k} > 0$ and $\lambda_{2k} > 0,$ and so $w_k = \lambda_{1k}u_k+\lambda_{2k} v_k$ for some $u_k \in \partial f_1(x_k)$ and $v_k \in \partial f_2(x_k).$ Analysis similar to that in the proof of Proposition~\ref{Proposition4.9} shows that the sequences $u_k$ and $v_k$ are bounded. 
Passing to subsequences if necessary, we can assume that the sequences $u_k$ and $v_k$ converge to $u$ and $v,$ respectively. Then $w = 
\lambda_1 u + \lambda_2 v  \in \lambda_1 \partial f_1(\infty) + \lambda_2  \partial f_2 (\infty),$ as required.

\subsubsection*{Case 2: $\lambda_{1}  = 0$} Then $\lambda_{2}  = 1.$ Set $K :=\{k \in \mathbb{N}\mid \lambda_{1k}=0 \}.$ 

We first suppose that the set $K$ is finite. Then for all $k$ sufficiently large, there are $u_k \in \partial f_1(x_k)$ and $v_k \in \partial f_2({x_k})$ such that $w_k=\lambda_{1k}u_k+\lambda_{2k} v_k.$ By the same argument in the proof of Proposition~\ref{Proposition4.9} (Case~2), it is easy to see that 
the sequence $v_k$ must be bounded. Passing to subsequences if necessary, we can assume that the sequence $v_k$ converges to some $v.$ Then $v  \in  \partial f_2(\infty)$ and the sequence $\lambda_{1k} u_k$ must converge to $w - v.$ As $\lambda_{1k} \to 0$ and $u_k \in \partial f_1(x_k),$ it follows from Proposition~\ref{Proposition4.4} that $w - v \in \partial^{\infty}f_1(\infty),$ and hence $w \in \partial^{\infty}f_1(\infty) + \partial f_2(\infty).$

We now assume that the set $K$ is infinite. Then for all $k \in K,$ we can write $w_k = u'_k + \lambda_{2k} v_k,$ where $u'_k \in \partial^{\infty}f_1(x_k)$ and  $v_k \in \partial f_2(x_k).$ As in the proof of Proposition~\ref{Proposition4.9}, it is not hard to see that the sequences $u_k'$ and $v_k$ are bounded.  Passing to subsequences if necessary, we can assume that the sequences $u_k'$ and $v_k$ converge to $u'$ and $v,$ respectively. Then $w = u' + \lambda v  \in \partial^\infty f_1(\infty) + \lambda_2  \partial f_2 (\infty),$ as required.

\subsubsection*{Case 3: $\lambda_{1}  = 1$} The proof is similar to Case 2 and is omitted.
\end{proof}

\begin{proposition}[subdifferentiation of minimum functions]
Let $f_1, f_2 \colon \mathbb{R}^n \to \overline{\mathbb{R}}$ be lower semi-continuous functions. Then
\begin{eqnarray*}
\partial \left( {\rm min}\{f_1,f_2\}\right)(\infty) & \subset & \partial f_1(\infty) \cup \partial f_2 (\infty),\\
\partial^{\infty} \left( {\rm min}\{f_1,f_2\}\right)(\infty) & \subset & \partial^{\infty} f_1(\infty) \cup \partial^{\infty} f_2 (\infty). 
	\end{eqnarray*}
\end{proposition}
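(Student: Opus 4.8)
The plan is to reduce the whole statement to a single union rule for the normal cone at infinity, exploiting the fact that the epigraph of a minimum is the union of the two epigraphs. First I would record the elementary identity
$$\mathrm{epi}\big(\min\{f_1, f_2\}\big) \ = \ \mathrm{epi} f_1 \cup \mathrm{epi} f_2,$$
which is immediate from the equivalence $\min\{f_1(x), f_2(x)\} \le r \iff f_1(x) \le r \text{ or } f_2(x) \le r$; since $f_1$ and $f_2$ are lower semi-continuous, all three sets are closed, and hence locally closed.

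The heart of the argument is the following \emph{union rule}: for locally closed sets $A, B \subset \mathbb{R}^n$ one has
$$N_{A \cup B}(\infty_I) \ \subset \ N_A(\infty_I) \cup N_B(\infty_I).$$
To prove it, I would take $\xi \in N_{A \cup B}(\infty_I)$; by definition there are $x_k \in A \cup B$ with $\pi(x_k) \to \infty$ and $\xi_k \in \widehat{N}_{A \cup B}(x_k)$ with $\xi_k \to \xi.$ Each $x_k$ lies in $A$ or in $B,$ so by the pigeonhole principle, after passing to a subsequence, I may assume that $x_k \in A$ for all $k$ (the case $x_k \in B$ is symmetric). Because $A \subset A \cup B$ and $x_k \in A,$ the defining inequality of the regular normal cone shows that enlarging the ambient set only adds constraints, whence $\widehat{N}_{A \cup B}(x_k) \subset \widehat{N}_A(x_k)$ and so $\xi_k \in \widehat{N}_A(x_k).$ Letting $k \to \infty$ and invoking the definition of $N_A(\infty_I)$ yields $\xi \in N_A(\infty_I),$ and therefore $\xi \in N_A(\infty_I) \cup N_B(\infty_I).$

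Finally, I would apply the union rule with $A = \mathrm{epi} f_1$ and $B = \mathrm{epi} f_2,$ taking $I = \{1, \ldots, n\}$ as in the definition of the subdifferentials at infinity. For $u \in \partial(\min\{f_1,f_2\})(\infty)$ we have $(u, -1) \in N_{\mathrm{epi} f_1 \cup \mathrm{epi} f_2}(\infty_I) \subset N_{\mathrm{epi} f_1}(\infty_I) \cup N_{\mathrm{epi} f_2}(\infty_I),$ so $(u, -1)$ belongs to $N_{\mathrm{epi} f_1}(\infty_I)$ or to $N_{\mathrm{epi} f_2}(\infty_I);$ reading off the definition of $\partial f_i(\infty)$ gives $u \in \partial f_1(\infty) \cup \partial f_2(\infty).$ Replacing $(u, -1)$ by $(u, 0)$ throughout yields the singular statement verbatim.

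The main obstacle I anticipate lies in the union rule, specifically two points. The first is the antitonicity step $\widehat{N}_{A\cup B}(x_k) \subset \widehat{N}_A(x_k)$ at points $x_k \in A,$ which must be argued directly from the regular normal cone inequality rather than from limiting-cone calculus (the limiting cone of a union need not embed so simply). The second is ensuring the subsequence extraction remains legitimate even when one of $\pi(A), \pi(B)$ fails to be unbounded: in that degenerate case the corresponding normal cone at infinity is empty, but then all but finitely many $x_k$ lie in the other set, so the dichotomy still selects the correct limit and the inclusion is preserved.
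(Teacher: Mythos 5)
Your proof is correct, but it takes a genuinely different route from the paper's. The paper argues at the level of subdifferentials at finite points: by Proposition~\ref{Proposition4.4}, any $w \in \partial(\min\{f_1,f_2\})(\infty)$ is a limit of $w_k \in \partial(\min\{f_1,f_2\})(x_k)$ with $x_k \to \infty$; the pointwise minimum rule from Mordukhovich's book then yields $w_k \in \partial f_1(x_k) \cup \partial f_2(x_k)$, and a (tacit) pigeonhole argument plus a second application of Proposition~\ref{Proposition4.4} finishes, with the singular inclusion declared ``similar'' via the characterization $\partial^{\infty} f(\infty) = \Limsup_{x \to \infty,\, r \searrow 0} r\, \partial f(x)$. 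You instead work geometrically: the identity $\mathrm{epi}(\min\{f_1,f_2\}) = \mathrm{epi}\, f_1 \cup \mathrm{epi}\, f_2$ combined with a union rule $N_{A \cup B}(\infty_I) \subset N_A(\infty_I) \cup N_B(\infty_I)$ proved from scratch, where the key step is the antitonicity $\widehat{N}_{A \cup B}(x) \subset \widehat{N}_A(x)$ for $x \in A$ --- a correct and elementary property of regular normal cones, and you rightly flag it as the point that must be argued directly from the defining inequality rather than from limiting-cone calculus. Your route buys several things: it is self-contained (no appeal to the external pointwise minimum rule, hence no bookkeeping about which $f_i$ attains the minimum at each $x_k$), it dispatches the limiting and singular statements in a single stroke by replacing $(u,-1)$ with $(u,0)$, and it extends verbatim to minima of finitely many functions and to arbitrary index sets $I$. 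The paper's route is shorter given its established machinery and fits its uniform pattern of lifting finite-point calculus rules to infinity through Proposition~\ref{Proposition4.4}. One small remark: your closing caveat about the case where $\pi(A)$ is bounded resolves itself more simply than you suggest --- if infinitely many $x_k$ lie in $A$ while $\pi(x_k) \to \infty$, then $\pi(A)$ is automatically unbounded, so whichever set the pigeonhole selects has a well-defined, nonvacuous normal cone at infinity.
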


\begin{proof}
We only verify the first inclusion because the proof of the second one is similar.
Take any $w \in \partial \left( {\rm min}\{f_1,f_2\}\right)(\infty).$ In light of Proposition~\ref{Proposition4.4}, there are sequences $x_k \to \infty$ and $w_k \in \partial \left( {\rm min}\{f_1,f_2\}\right)(x_k)$ satisfying $w_k \to w.$ By \cite[Proposition~4.9]{Mordukhovich2018}, $w_k \in \partial f_1(x_k) \cup \partial f_2 (x_k).$ Applying Proposition~\ref{Proposition4.4} again, we obtain $w \in \partial f_1(\infty) \cup \partial f_2 (\infty).$
\end{proof}

Another application of Proposition~\ref{Proposition4.4} handles {\em partial subdifferentiation}.

\begin{proposition}[partial subdifferentials]
For a proper lower semi continuous function 
$$\varphi \colon \mathbb{R}^n \times \mathbb{R}^m \to \overline{\mathbb{R}}, \quad (x, y) \mapsto \varphi (x,y),$$
and a point $\overline{y} \in \mathbb{R}^m,$ let $\partial_{x}\varphi (\infty, \overline{y})$ and $\partial_{x}^{\infty}\varphi (\infty,\overline{y})$ denote the limiting and singular subdifferentials of the function $\varphi(\cdot, \overline{y}) \colon \mathbb{R}^n \to \overline{\mathbb{R}}, x \mapsto \varphi(x, \overline{y})$ at infinity. Under the condition that $(0, v) \in \partial^{\infty}\varphi(\infty)$ implies that $v = 0,$ we have
\begin{eqnarray*}
	\partial_{x}\varphi (\infty,\overline{y}) & \subset & \{ u \in \mathbb{R}^n \mid \exists v \in \mathbb{R}^m \; \textrm{with}\; (u, v)\in \partial \varphi(\infty) \},\\
	\partial_{x}^{\infty}\varphi (\infty,\overline{y}) & \subset & \{ u \in \mathbb{R}^n \mid \exists v \in \mathbb{R}^m \; \textrm{with}\; (u, v)\in \partial^{\infty} \varphi(\infty) \}. 
\end{eqnarray*}
\end{proposition}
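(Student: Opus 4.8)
The plan is to mirror the structure of the preceding sum and maximum rules, reducing everything to the finite-dimensional partial subdifferential calculus via the fundamental characterization in Proposition~\ref{Proposition4.4}. First I would recall the classical partial subdifferential result (which is the finite-dimensional analogue: under the qualification condition that $(0,v)\in\partial^\infty\varphi(\overline{x},\overline{y})$ implies $v=0$, one has $\partial_x\varphi(\overline{x},\overline{y})\subset\{u\mid \exists v,\ (u,v)\in\partial\varphi(\overline{x},\overline{y})\}$), which is available in \cite{Mordukhovich2018}. The strategy is to upgrade the qualification condition \emph{at infinity} into a qualification condition \emph{at finite points} $x$ with $\|x\|$ large, apply the classical rule at each such point, and then pass to the limit using Proposition~\ref{Proposition4.4}.

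The first key step is to show that the hypothesis ``$(0,v)\in\partial^\infty\varphi(\infty)\Rightarrow v=0$'' forces a uniform qualification condition near infinity: there exists $R>0$ such that for all $(x,\overline{y})$ with $\|x\|>R$ one has the implication $(0,v)\in\partial^\infty\varphi(x,\overline{y})\Rightarrow v=0$. I would prove this by contradiction exactly as in the opening paragraph of Proposition~\ref{Proposition4.9}: if it failed, there would be sequences $x_k\to\infty$ and $(0,v_k)\in\partial^\infty\varphi(x_k,\overline{y})$ with $v_k\neq 0$; normalizing so that $\|(0,v_k)\|=1$ (hence $\|v_k\|=1$) and passing to a convergent subsequence $v_k\to v$ with $\|v\|=1$, Proposition~\ref{Proposition4.4} applied to $\varphi$ on $\mathbb{R}^{n+m}$ gives $(0,v)\in\partial^\infty\varphi(\infty)$ with $v\neq 0$, contradicting the hypothesis.

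The second step carries out the limiting argument for the first inclusion. Take $u\in\partial_x\varphi(\infty,\overline{y})$; by Proposition~\ref{Proposition4.4} applied to the function $\varphi(\cdot,\overline{y})$, there are sequences $x_k\to\infty$ and $u_k\in\partial_x\varphi(x_k,\overline{y})$ with $u_k\to u$. For $k$ large, $\|x_k\|>R$, so the classical partial subdifferential rule applies at $(x_k,\overline{y})$ and yields $v_k\in\mathbb{R}^m$ with $(u_k,v_k)\in\partial\varphi(x_k,\overline{y})$. The remaining task is to extract a convergent subsequence of $(u_k,v_k)$ and identify the limit in $\partial\varphi(\infty)$ via Proposition~\ref{Proposition4.4}. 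Here the potential difficulty is that the $v_k$ need not be bounded a priori, so I would split into two cases as in Proposition~\ref{Proposition4.9}: if $\{v_k\}$ is bounded, pass to $(u_k,v_k)\to(u,v)$ and conclude $(u,v)\in\partial\varphi(\infty)$ directly; if $\{v_k\}$ is unbounded, normalize by $\|v_k\|$ to get $(u_k/\|v_k\|,\,v_k/\|v_k\|)\to(0,v)$ with $\|v\|=1$, which by Proposition~\ref{Proposition4.4} lies in $\partial^\infty\varphi(\infty)$, giving $(0,v)\in\partial^\infty\varphi(\infty)$ with $v\neq 0$ and contradicting the hypothesis. Thus only the bounded case survives and the inclusion follows.

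The third step treats the singular inclusion in complete parallel, replacing $\partial\varphi$ by $\partial^\infty\varphi$ throughout and invoking the singular version of the classical partial rule together with the second equality in Proposition~\ref{Proposition4.4}; since the argument is identical I would simply note this and omit the repetition. The main obstacle I anticipate is entirely contained in the boundedness dichotomy of the second step: verifying that the unbounded scenario genuinely contradicts the qualification condition at infinity requires the precise scaling characterization $\partial^\infty\varphi(\infty)=\Limsup_{x\to\infty,\,r\searrow 0} r\,\partial\varphi(x)$ from \eqref{Eqn4}, so the care lies in matching the normalization to that description rather than in any novel estimate.
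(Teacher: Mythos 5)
Your proposal is correct and follows the paper's proof essentially step for step: the same upgrade of the hypothesis to a qualification condition at finite points $x$ with $\|x\|$ large (by the normalization-and-contradiction scheme of Proposition~\ref{Proposition4.9}), the same appeal to \cite[Corollary~10.11]{Rockafellar1998} along a sequence $x_k \to \infty$, and the same bounded/unbounded dichotomy on the auxiliary vectors $v_k$, with the unbounded case contradicting the hypothesis via the scaling characterization of $\partial^{\infty}\varphi(\infty)$ from Proposition~\ref{Proposition4.4}. If anything, your version of the finite-point condition, stated with the singular subdifferential $\partial^{\infty}\varphi(x,\overline{y})$, is the one actually required by Corollary~10.11 (and is exactly what your outer-limit argument via the inclusion in \eqref{Eqn4} establishes), whereas the paper's condition~\eqref{Eqn7} is written with the limiting subdifferential $\partial\varphi(x,\overline{y})$ and its justification implicitly assumes $\|v_k\| \to \infty$.
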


\begin{proof}
We only prove the first inclusion because the proof of the second one is similar. We first prove that there is a constant $R>0$ such that for all $x \in \mathbb{R}^n \setminus \mathbb{B}_{R}$, 
\begin{eqnarray}\label{Eqn7}
		(0, v) \in \partial \varphi(x,\overline{y}) \quad \textrm{ implies that } \quad v = 0. 
\end{eqnarray}
Indeed, if this were not true, there would exist sequences $x_k \in \mathbb{R}^n$ and $v_k \in \mathbb{R}^m$, $v_k \neq 0$ such that $x_k \to \infty$ and $(0,  v_k)\in \partial \varphi(x_k,\overline{y})$. Hence, $\left(0,\dfrac{v_k}{\|v_k\|}\right)\in \dfrac{1}{\|v_k\|}\partial \varphi(x_k, \overline{y}).$ Letting $k \to \infty$ and applying Proposition~\ref{Proposition4.4}, we get a unit vector $v$ such that $(0, v)\in \partial^{\infty} \varphi(\infty),$ a contradiction. 
	
We now take any $u \in \partial_{x}\varphi (\infty, \overline{y}).$ By Proposition~\ref{Proposition4.4}, there are sequences $x_k \to \infty$ and $u_k \in \partial_{x}\varphi (x_k,\overline{y})$ such that $u_k \to u$. For all $k$ sufficiently large, the condition~\eqref{Eqn7} holds at $x = x_k.$ It follows from \cite[Corollary~10.11]{Rockafellar1998} that
	$$
	\partial_{x}\varphi (x_k,\overline{y})\subset \{u \in \mathbb{R}^m \mid \exists v \in \mathbb{R}^n \; \textrm{with} \; (u, v) \in \partial \varphi (x_k,\overline{y})\}. 
	$$
Hence, there is $v_k \in \mathbb{R}^m$ such that $(u_k, v_k) \in \partial \varphi (x_k,\overline{y}).$
There are two cases:
	
\subsubsection*{Case 1: the sequence $y_k$ is bounded.}  We can assume that $v_k \to v$. As $x_k \to \infty$, we get $(x_k, v) \to \infty$ and $(u, v) \in \partial \varphi(\infty)$ as required.
	
\subsubsection*{Case 2: the sequence $v_k$ is unbounded.}  We have $\left(\dfrac{u_k}{\|v_k\|},\dfrac{v_k}{\|v_k\|} \right) \in \dfrac{1}{\|v_k\|} \partial \varphi (x_k,\overline{y}).$ Letting $k \to \infty$ and applying Proposition~\ref{Proposition4.4}, we get a unit vector $v$ such that $(0, v) \in \partial^{\infty} \varphi (\infty),$ which contradicts our assumption. 
\end{proof}

\section{Lipschitzness at infinity}\label{Section5}

In this section, we provide some properties of the Lipschitz continuity at infinity for lower semi-continuous functions. So, let $f \colon \mathbb{R}^n \to {\mathbb{R}}$ be a lower semi-continuous function, fix $I := \{1, \ldots, n\} \subset \{1, \ldots, n, n + 1\}$ and consider the projection $\pi \colon \mathbb{R}^n \times \mathbb{R} \to \mathbb{R}^{n}, (x, y) \mapsto x.$

\begin{definition}{\rm
We say that $f$ is {\em Lipschitz at infinity} if there exist constants $L > 0$ and $R > 0$ such that
\begin{eqnarray*}
\|f(x) - f(x') \| &\le& L \|x - x'\| \quad \textrm{ for all } \quad x, x' \in \mathbb{R}^n \setminus \B_R.
\end{eqnarray*}
}\end{definition}

\begin{proposition}\label{Proposition5.2}   
The function $f$ is Lipschitz at infinity if and only if $\partial^{\infty}f(\infty)=\{0\}.$ In this case, $\partial f(\infty)$ is nonempty compact. 
\end{proposition}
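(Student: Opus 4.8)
The plan is to prove both directions of the equivalence, and then derive the compactness of $\partial f(\infty)$ in the Lipschitz case. The central tool throughout will be the characterizations in Proposition~\ref{Proposition4.4}, namely $\partial f(\infty) = \Limsup_{x \to \infty} \partial f(x)$ and $\partial^\infty f(\infty) = \Limsup_{x \to \infty, r \searrow 0} r\,\partial f(x)$, which transfer the finite-point interplay between Lipschitzness and subdifferentials to the behaviour at infinity.

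\textbf{Necessity.} First I would assume $f$ is Lipschitz at infinity with constants $L$ and $R$. The key classical fact is that if $f$ is Lipschitz with modulus $L$ on a neighborhood of $x$, then $\partial f(x) \subset L\,\mathbb{B}$ (bounded subgradients) and $\partial^\infty f(x) = \{0\}$. I would invoke this pointwise for all $x$ with $\|x\| > R$: every such $x$ has a neighborhood on which $f$ is $L$-Lipschitz (shrinking the neighborhood to stay inside $\mathbb{R}^n \setminus \B_R$), so $\partial f(x) \subset L\,\mathbb{B}$. Then from $\partial^\infty f(\infty) = \Limsup_{x \to \infty, r \searrow 0} r\,\partial f(x)$, any element is a limit of $r_k u_k$ with $u_k \in \partial f(x_k)$, $\|x_k\| \to \infty$, $r_k \searrow 0$; since $\|u_k\| \le L$ eventually and $r_k \to 0$, the product tends to $0$. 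Hence $\partial^\infty f(\infty) = \{0\}$.

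\textbf{Sufficiency.} This is the harder direction and I expect the main obstacle here. Assuming $\partial^\infty f(\infty) = \{0\}$, I would first extract a uniform Lipschitz modulus near infinity. The natural route is by contradiction: if no such $L, R$ exist, one can find points $x_k, x_k'$ with $\|x_k\|, \|x_k'\| \to \infty$ and $|f(x_k) - f(x_k')| > k\|x_k - x_k'\|$, so the local Lipschitz modulus blows up along a sequence going to infinity. Using a mean-value/Lebourg-type inequality or the scalarization characterizing Lipschitz modulus via subgradients, the blow-up of the Lipschitz constant forces the existence of $x_k \to \infty$ and $u_k \in \partial f(x_k)$ with $\|u_k\| \to \infty$. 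Normalizing, $u_k/\|u_k\| \to w$ for some unit vector $w$, and choosing $r_k := 1/\|u_k\| \searrow 0$ gives $r_k u_k \to w$ with $\|w\| = 1$; by Proposition~\ref{Proposition4.4} this yields $w \in \partial^\infty f(\infty)$, contradicting $\partial^\infty f(\infty) = \{0\}$. The delicate point is producing the unbounded subgradients $u_k$ from the failure of uniform Lipschitzness: this requires a careful local argument, likely combining lower semicontinuity, the local boundedness/unboundedness dichotomy of $\partial f$ on compact sets, and possibly the Ekeland principle (Lemma~\ref{Lemma2.11}) or a known finite-dimensional result equating the Lipschitz modulus with the supremum of subgradient norms.

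\textbf{Compactness of $\partial f(\infty)$.} Finally, in the Lipschitz-at-infinity case, I would show $\partial f(\infty)$ is nonempty and compact. Boundedness follows immediately from necessity: for $\|x\| > R$ we have $\partial f(x) \subset L\,\mathbb{B}$, so $\partial f(\infty) = \Limsup_{x \to \infty} \partial f(x) \subset L\,\mathbb{B}$ is bounded; it is closed by the remark following the definition of subdifferentials at infinity (the outer limit is always closed). Nonemptiness is the only remaining piece: since $\partial^\infty f(\infty) = \{0\}$, the nonemptiness statement of Proposition~\ref{Proposition4.8}, which asserts $\partial f(\infty) \cup (\partial^\infty f(\infty) \setminus \{0\}) \neq \emptyset$, forces $\partial f(\infty) \neq \emptyset$. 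Together, boundedness and closedness give compactness.
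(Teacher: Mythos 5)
Your necessity argument and the compactness part coincide with the paper's proof: the uniform bound $\partial f(x)\subset L\mathbb{B}$ for $\|x\|>R$ (via \cite[Theorem~1.22]{Mordukhovich2018}), the conclusion $r_k u_k\to 0$, closedness of the outer limit, and nonemptiness from Proposition~\ref{Proposition4.8} are all exactly as in the paper. The genuine gap is in sufficiency, and it sits precisely where you flagged it. Your scheme runs from the failure of Lipschitzness to unbounded subgradients via ``a mean-value/Lebourg-type inequality,'' but those tools presuppose local Lipschitz continuity, which is exactly what is not yet known for a merely lower semi-continuous $f$ --- as sketched, the argument is circular. The workable order is the reverse one, and it is what the paper does: from $\partial^{\infty}f(\infty)=\{0\}$ and Proposition~\ref{Proposition4.4} one gets \emph{directly} (by the very normalization $r_k:=1/\|u_k\|$ you describe, no mean value theorem needed) constants $L,R>0$ with $\|u\|\le L$ for all $u\in\partial f(x)$ and $\|x\|>R$; then the subdifferential criterion for Lipschitz continuity \cite[Theorem~4.15]{Mordukhovich2018} converts this uniform subgradient bound into local Lipschitz continuity of $f$ on $\mathbb{R}^n\setminus\mathbb{B}_R$ with constant $L$. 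No contradiction with non-Lipschitzness is ever extracted from two-point difference quotients.

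Even granting that repair, a second step that your sketch never addresses remains: local $L$-Lipschitz continuity on $\mathbb{R}^n\setminus\mathbb{B}_R$ does not by itself give the two-point estimate $|f(x)-f(x')|\le L\|x-x'\|$ for \emph{all} far $x,x'$, because the segment $[x,x']$ may cross $\mathbb{B}_R$ (and for $n=1$ the set $\mathbb{R}\setminus\mathbb{B}_R$ is disconnected, so no path argument is available at all); this is the nonconvexity of neighborhoods of infinity emphasized in the introduction. The paper bridges this with a smooth cutoff: choose $\varphi$ with $\varphi\equiv 0$ on $\mathbb{B}_{2R}$ and $\varphi\equiv 1$ off $\mathbb{B}_{3R}$, set $\overline{f}:=f\varphi$; then $\overline{f}$ is locally Lipschitz on all of $\mathbb{R}^n$ with a single constant (after enlarging $L$, using compactness of $\mathbb{B}_{3R}$), the nonsmooth mean value theorem \cite[Corollary~4.14]{Mordukhovich2018} gives global Lipschitzness of $\overline{f}$, and $\overline{f}\equiv f$ off $\mathbb{B}_{3R}$ yields Lipschitzness of $f$ at infinity. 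Without this cutoff (or an equivalent detour/bridging argument), your sufficiency proof is incomplete.
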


\begin{proof} 
We first suppose that $f$ is Lipschitz at infinity with constant $L>0$. Take any $u \in \partial^{\infty}f(\infty)$. In view of Proposition~\ref{Proposition4.4}, there are sequences $r_k \searrow 0$, $x_k \to \infty$, and $u_k \in \partial f(x_k)$ such that $r_k u_k \to u.$ For $k$ sufficiently large, $f$ is Lipschitz around $x_k$ with constant $L.$  By \cite[Theorem~1.22]{Mordukhovich2018}, we have $\|u_k \| \leq L$. Hence, $ r_k u_k \to 0$ and so $u = 0.$ Therefore $\partial^{\infty}f(\infty)=\{0\}.$ (Note that $0 \in \partial^{\infty}f(\infty).$)  

Conversely, assume that $\partial^{\infty}f(\infty) = \{0\}.$ Then there are constants $L > 0$ and $R > 0$ such that for all $x \in \mathbb{R}^n \setminus \B_{R}$ and all $u \in \partial f(x),$ it holds that $\|u\| \leq L.$ Indeed, if this were not true, there would exist sequences $x_k$ tending to infinity and $u_k \in \partial f(x_k)$ such that $\|u_k\| >  k$ for all $k.$ Passing to a subsequence if necessary we can assume that $\dfrac{1}{\|u_k\|}u_k \to u $ with $\|u\|=1$. According to Proposition~\ref{Proposition4.4}, we arrive at a contradiction that  $u \in \partial^{\infty}f(\infty)\setminus \{0\}.$ 
Therefore, for all $x \in \mathbb{R}^n \setminus \B_{R}$ and all $u \in \partial f(x),$ we have $\|u\| \leq L.$ In view of \cite[Theorem 4.15]{Mordukhovich2018}, $f$ is locally Lipschitz continuous on $\mathbb{R}^n \setminus \B_{R}$ with constant $L.$

On the other hand, there is a $C^{\infty}$-function $\varphi \colon \mathbb{R}^n \to [0, 1]$ such that 
$$\varphi(x) = \begin{cases}
1 & \textrm{ if } \ \|x\| \ge 3R, \\
0 & \textrm{ if } \ \|x\| \le 2R.
\end{cases}$$
Define the function $\overline{f} \colon \mathbb{R}^n \to \mathbb{R}$ by $\overline{f}(x) := f(x) \varphi(x).$ Then $\overline{f}  \equiv f$ on 
$\mathbb{R}^n\setminus \mathbb{B}_{3R}.$ Furthermore, it is not hard to see that $\overline{f}$ is locally Lipschitz and so it is globally Lipschitz on the compact set $\mathbb{B}_{3R}.$ Increasing $L$ if necessary, we may assume that $\overline{f}$ is locally Lipschitz with constant $L.$
It follows from \cite[Theorem 1.22]{Mordukhovich2018} that $\|u\| \leq L$ for all $u \in \partial \overline{f}(x)$ and all $x \in \mathbb{R}^n.$
Applying the mean value theorem (see \cite[Corollary 4.14]{Mordukhovich2018}), we get for all $x, x' \in \mathbb{R}^n$ that 
\begin{eqnarray*}
|\overline{f}(x) - \overline{f}(x') | &\leq& \|x-x'\| \sup \{ \|u\| \mid u \in \partial \overline{f}(y), y \in [x, x'] \} \ \leq \ L \|x-x'\|,
\end{eqnarray*}
where $[x, x']$ denotes the segment with endpoints $x$ and $x',$ i.e., $[x, x'] := \{(1 - t) x + tx'  \mid 0 \le t\le 1\}.$
In particular, if $x, x' \in \mathbb{R}^n \setminus \B_{3R}$ then 
\begin{eqnarray*}
|f(x) - f(x')| &=& |\overline{f}(x) - \overline{f}(x')| \  \le \ L\|x - x'\|.
\end{eqnarray*}
Therefore $f$ is Lipschitz at infinity.

The non-emptiness of $\partial f(\infty)$ follows directly from Proposition~\ref{Proposition4.8}. Pick any $u \in \partial f(\infty)$, there are $x_k \to \infty$ and $u_k \in \partial f(x_k)$ with $u_k \to u$. For $k$ sufficiently large, the Lipschitzness at infinity of $f$ ensures that $\|u_k\| \le L,$ and so $\|u\|\leq L.$ Therefore, $\partial f(\infty) \subseteq \mathbb{B}_{L}.$ Finally, $\partial f(\infty)$ is closed by definition, hence it is compact.	
\end{proof}

\begin{example}{\rm 
(i) The function 
$$f \colon \mathbb{R}^n \to \mathbb{R}, \quad x \mapsto \sqrt{\sum_{i = 1}^n |x_i|},$$ 
is globally Lipschitz on $\mathbb{R}^n \setminus \mathbb{B}_R$ for any $R > 0,$ and so it is Lipschitz at infinity.

(ii) Let $f \colon \mathbb{R}^n \to {\mathbb{R}}$ be a {\em piecewise linear function}, i.e., $\mathbb{R}^n = \cup_{i= 1}^k D_i$ with $D_i$ 
polyhedral, and for each $i$ there exist $a_i \in \mathbb{R}^n$ and $b_i \in \mathbb{R}$ such that 
\begin{eqnarray*}
f(x) &=&  \langle a_i, x \rangle +b_i \quad \textrm{ for all } \quad  x \in D_i. 
\end{eqnarray*}
The sets $D_i$ are closed, and $f$ is continuous relative to them, so because only finitely many are involved, $f$ is lower semi-continuous.

Set $I :=\{ i \mid D_i \; {\rm is \; unboubded}\}.$ Then there is a constant $R > 0$ such that for all $x \in \mathbb{R}^n  \setminus \mathbb{B}_{R}$ we have
\begin{eqnarray*}
f(x) &\leq& \max_{i \in I} f_i(x), 
\end{eqnarray*}	
which, together with Lemma~\ref{Lemma2.9}, implies that
\begin{eqnarray*}
\partial f(x) &\subset& \partial \max_{i \in I} f_i(x) \ \subset \ \textrm{co}\left(\bigcup_{i\in I}\{a_i\} \right).
\end{eqnarray*}
It follows from Proposition~\ref{Proposition4.4} that $\partial f(\infty) \subset \textrm{co}\left(\bigcup_{i\in I}\{a_i\} \right)$ and $\partial^{\infty} f(\infty)=\{0\},$ and so $f$ is Lipschitz at infinity (by Propositions~\ref{Proposition5.2}).
}\end{example}

\begin{remark}{\rm
The non-emptiness and compactness of $\partial f(\infty)$ do not yield the Lipschitzness at infinity of $f.$ For instance, consider the smooth function $f \colon \mathbb{R} \to \mathbb{R}, x \mapsto e^x.$ It follows from Proposition~\ref{Proposition4.4} that $\partial f(\infty)=\{0\}$ and $\partial^{\infty} f(\infty)=[0, \infty).$ However, by Proposition~\ref{Proposition5.2}, $f$ is not Lipschitz at infinity.
}\end{remark}

\begin{example}[subdifferentiation of distance functions]{\rm 
Let $\Omega \subset \mathbb{R}^n$ be a nonempty closed set. Clearly, the distance function
$$d_{\Omega} \colon \mathbb{R}^n \to \mathbb{R}, \quad x \mapsto \inf_{y \in \Omega} \|x - y\|,$$ 
is globally Lipschitz with constant $1.$ By Proposition~\ref{Proposition5.2}, $\partial^{\infty} d_{\Omega} (\infty) = \{0\}.$ Moreover, in view of  \cite[Theorem~1.33]{Mordukhovich2018}, we have 
\begin{eqnarray*}
\partial d_{\Omega}(x) &=& 
\begin{cases}
N_{\Omega}(x) \cap \mathbb{B} & \quad \textrm{ if } x \in \Omega, \\
\frac{x - \Pi_{\Omega}(x)}{d_{\Omega}(x)} & \quad \textrm{ otherwise,}
\end{cases}
\end{eqnarray*}
where $\Pi_{\Omega}(x) := \{y \in \Omega \ | \  \|x - y \| = d_{\Omega}(x)\}.$ We will show that
\begin{eqnarray*}
\partial d_{\Omega} (\infty) &=&
\begin{cases}
(N_{\Omega}(\infty) \cap \mathbb{B}) \cup \mathscr{D} &\textrm{ if  } \Omega \textrm{ is unbounded},\\
\mathbb{S}^{n - 1} &\textrm{ otherwise},
\end{cases}
\end{eqnarray*}
where $\mathbb{S}^{n - 1}$ is the unit sphere in $\mathbb{R}^n$ and $\mathscr{D} := \Limsup_{x \to \infty, \ x \not \in \Omega} \frac{x - \Pi_{\Omega}(x)}{d_{\Omega}(x)}.$ Indeed, in the light of Proposition~\ref{Proposition4.4}, it suffices to prove that if $\Omega$ is bounded, then
$\mathbb{S}^{n - 1} \subset \partial d_{\Omega} (\infty).$ To this end, take any $u \in \mathbb{S}^{n - 1}.$ By the classical Weierstrass theorem, there exists a point $\overline{x} \in \Omega$ such that 
\begin{eqnarray*}
\langle u, \overline{x} \rangle &\ge& \langle u, x \rangle  \quad \textrm{ for all } \quad x \in \Omega.
\end{eqnarray*}
It follows that for all $t > 0,$ we have 
$$\overline{x} + tu \not \in \Omega, \quad d_\Omega(\overline{x} + tu) = \|\overline{x} + tu - \overline{x}\| = t, \quad \textrm{ and } \quad \overline{x} \in \Pi_{\Omega}(\overline{x} + tu).$$ 
Hence
\begin{eqnarray*}
u &=& \frac{(\overline{x} + tu) - \overline{x}}{t} \ \in \ \ \partial d_{\Omega}(\overline{x} + tu).
\end{eqnarray*}
Letting $t \to \infty$ and applying Proposition~\ref{Proposition4.4}, we get $u \in \partial d_{\Omega} (\infty).$ Since $u$ was arbitrary in $\mathbb{S}^{n - 1},$ we conclude that $\mathbb{S}^{n - 1} \subset \partial d_{\Omega} (\infty),$ as required.
}\end{example}

\begin{proposition}\label{Proposition5.6} 
If $f$ is Lipschitz at infinity, then
\begin{eqnarray*}
{\rm co}\partial f(\infty) &=& \partial^{\mathrm{Clarke}} f(\infty). 
\end{eqnarray*}
\end{proposition}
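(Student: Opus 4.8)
The plan is to transfer the classical pointwise identity $\partial^{\mathrm{Clarke}} f(x) = \mathrm{co}\,\partial f(x)$ (valid wherever $f$ is locally Lipschitz) to infinity by working entirely in the epigraph and reducing to one structural identity for normal cones at infinity. Since $f$ is Lipschitz at infinity, Proposition~\ref{Proposition5.2} gives $\partial^{\infty} f(\infty) = \{0\}$ and guarantees that $\partial f(\infty)$ is nonempty and compact. First I would compute $N_{\mathrm{epi} f}(\infty_I)$ explicitly. As $\mathrm{epi} f$ is an epigraph, every regular normal has nonpositive last coordinate, and this survives in the limit, so $N_{\mathrm{epi} f}(\infty_I) \subset \mathbb{R}^n \times (-\infty, 0]$. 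Writing a generic element as $(u, -r)$ with $r \ge 0$: if $r > 0$ the cone property gives $(u/r, -1) \in N_{\mathrm{epi} f}(\infty_I)$, i.e. $u/r \in \partial f(\infty)$; if $r = 0$ then $u \in \partial^{\infty} f(\infty) = \{0\}$. Hence $N_{\mathrm{epi} f}(\infty_I) = \mathrm{cone}\big(\partial f(\infty) \times \{-1\}\big)$, and this set is closed because $\partial f(\infty)$ is compact and misses the origin in the last slot.

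The core of the argument is the ``at infinity'' analogue of the fact that the Clarke normal cone is the closed convex hull of the limiting normal cone, namely
$$N^{\mathrm{Clarke}}_{\mathrm{epi} f}(\infty_I) = \mathrm{cl}\,\mathrm{co}\,N_{\mathrm{epi} f}(\infty_I).$$
To establish it I would first show that the Clarke tangent cone at infinity is the Painlev\'e--Kuratowski inner limit of the contingent cones, $T_{\mathrm{epi} f}(\infty_I) = \liminf_{\pi(x) \to \infty} T_{\mathrm{epi} f}(x)$; the proof of the pointwise Cornet/Rockafellar--Wets theorem carries over verbatim once $x \to \bar x$ is replaced by $\pi(x) \to \infty$. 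Taking polars and invoking the standard duality $[\liminf_x C_x]^{*} = \mathrm{cl}\,\mathrm{co}\,[\Limsup_x C_x^{*}]$, together with $[T_{\mathrm{epi} f}(x)]^{*} = \widehat{N}_{\mathrm{epi} f}(x)$ from Lemma~\ref{Lemma2.3} and $\Limsup_{\pi(x) \to \infty} \widehat{N}_{\mathrm{epi} f}(x) = N_{\mathrm{epi} f}(\infty_I)$ (the very definition of the limiting normal cone at infinity), yields the displayed identity.

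Combining the two steps finishes the proof. Since convexification commutes with the conic hull, $\mathrm{co}\,\mathrm{cone}(A) = \mathrm{cone}(\mathrm{co}\,A)$, I get
$$N^{\mathrm{Clarke}}_{\mathrm{epi} f}(\infty_I) = \mathrm{cl}\,\mathrm{cone}\big(\mathrm{co}\,\partial f(\infty) \times \{-1\}\big) = \mathrm{cone}\big(\mathrm{co}\,\partial f(\infty) \times \{-1\}\big),$$
where the closure is superfluous because $\mathrm{co}\,\partial f(\infty)$ is again compact by Carath\'eodory's theorem. Slicing this cone at the level $-1$ forces the positive multiplier to equal $1$, so $\partial^{\mathrm{Clarke}} f(\infty) = \{u \mid (u, -1) \in N^{\mathrm{Clarke}}_{\mathrm{epi} f}(\infty_I)\} = \mathrm{co}\,\partial f(\infty)$, as claimed. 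I expect the single real obstacle to be the identity of the second paragraph: verifying $T_{\mathrm{epi} f}(\infty_I) = \liminf_{\pi(x) \to \infty} T_{\mathrm{epi} f}(x)$ and justifying the polar-duality interchange for cones at infinity is where the work lies, and it is precisely the compactness of $\partial f(\infty)$ furnished by Lipschitzness at infinity that lets me discard every closure and keep the sliced description exact; everything else is routine cone bookkeeping.
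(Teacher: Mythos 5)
Your endgame reductions are fine: granting Proposition~\ref{Proposition5.2}, the computation $N_{\mathrm{epi} f}(\infty_I) = \mathrm{cone}\big(\partial f(\infty) \times \{-1\}\big)$ from $\partial^{\infty}f(\infty)=\{0\}$, the identity $\mathrm{co}\,\mathrm{cone}(A)=\mathrm{cone}(\mathrm{co}\,A)$, and the slicing at level $-1$ are all correct. The genuine gap is the step you call ``standard duality'': the formula $[\liminf_x C_x]^{*} = \mathrm{cl}\,\mathrm{co}\,[\Limsup_x C_x^{*}]$ is \emph{false} for sequences of nonconvex closed cones. For a counterexample in $\mathbb{R}^2$, let $C_k$ alternate between $C_a := \mathbb{R}_{+}(1,1)\cup\mathbb{R}_{+}(1,-1)$ and $C_b := \mathbb{R}_{+}(1,2)\cup\mathbb{R}_{+}(1,-2)$: then $\liminf_k C_k=\{0\}$, so the left-hand side is all of $\mathbb{R}^2$, whereas $\Limsup_k C_k^{*} = C_a^{*} = \{\xi \mid \xi_1 \le -|\xi_2|\}$, which is already closed and convex and is a proper subset of $\mathbb{R}^2$. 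What exact cone polarity does give (via the Moreau decomposition, valid for closed \emph{convex} cones: $(\Limsup_k K_k)^{*} = \liminf_k K_k^{*}$) is $\mathrm{cl}\,\mathrm{co}\,N_{\mathrm{epi} f}(\infty_I) = \big(\liminf_{\pi(x)\to\infty} \mathrm{cl}\,\mathrm{co}\,T_{\mathrm{epi} f}(x)\big)^{*}$, with the \emph{convexified} contingent cones inside the inner limit -- and contingent cones to epigraphs of functions that are merely Lipschitz at infinity are typically nonconvex (downward kinks arbitrarily far out, e.g.\ $f$ with sawtooth behavior), so this does not match the $\big(\liminf_{\pi(x)\to\infty} T_{\mathrm{epi} f}(x)\big)^{*}$ your plan produces. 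In the pointwise theory the identity $N^{\mathrm{Clarke}}_{\Omega}(\bar x)=\mathrm{cl}\,\mathrm{co}\,N_{\Omega}(\bar x)$ is \emph{not} obtained by formal polarity from $T^{\mathrm{Clarke}}_{\Omega}(\bar x)=\liminf_{x\to\bar x}T_{\Omega}(x)$; the hard inclusion $N_{\Omega}(\bar x)^{*}\subset T^{\mathrm{Clarke}}_{\Omega}(\bar x)$ is proved by a variational proximal-normal/projection construction in \cite{Rockafellar1998}. To complete your route you would have to redo that construction at infinity, i.e.\ prove $N_{\mathrm{epi} f}(\infty_I)^{*} \subset T_{\mathrm{epi} f}(\infty_I)$ (for which the projection representation \eqref{Eqn2} of Proposition~\ref{Proposition3.5} is the natural tool); likewise only the easy inclusion $T_{\mathrm{epi} f}(\infty_I)\subset\liminf_{\pi(x)\to\infty}T_{\mathrm{epi} f}(x)$ of your first claimed identity transfers ``verbatim.'' So the single step you flagged as the obstacle is indeed the whole theorem, and the tool you invoke for it is invalid as stated.

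For contrast, the paper's proof sidesteps the normal-cone machinery entirely: since $f$ is Lipschitz outside a ball, Rademacher's theorem applies, and \cite[Proposition~5.3]{PHAMTS2023-4} represents $\partial^{\mathrm{Clarke}}f(\infty)$ as $\mathrm{co}\,A$, where $A$ is the set of limits of gradients $\nabla f(x_k)$ with $x_k \to \infty$ taken at differentiability points. Two elementary claims then finish: $\partial f(\infty)\subset\mathrm{co}\,A$, using the pointwise inclusion $\partial f(x)\subset\partial^{\mathrm{Clarke}}f(x)$, Clarke's gradient formula from \cite{Clarke1990}, Carath\'eodory's theorem, and the uniform bound $\|\nabla f\|\le L$; and $A\subset\partial f(\infty)$, since $\widehat{\partial}f(x)=\{\nabla f(x)\}$ at differentiability points and Proposition~\ref{Proposition4.4} applies. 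If you wish to salvage your approach, the at-infinity analogue of the Rockafellar--Wets polarity theorem is plausible and would be of independent interest, but it must be proved, not cited.
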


\begin{proof} 
Since $f$ is Lipschitz at infinity, there exist constants $L > 0$ and $R > 0$ such that
\begin{eqnarray*}
\|f(x) - f(x') \| &\le& L \|x - x'\| \quad \textrm{ for all } \quad x, x' \in \mathbb{R}^n \setminus \B_R.
\end{eqnarray*}
Let $\Omega_f \subset \mathbb{R}^n$ be the set of points where $f$ is not differentiable. By Rademacher's theorem (see, for example, \cite{Rockafellar1998}), the set $\Omega_f \setminus \B_R$ has measure zero in $\mathbb{R}^n.$ Moreover, we have
$\|\nabla f(x) \| \le L$ for all $x \in \mathbb{R}^n \setminus (\Omega \cup \mathbb{B}_R).$ Let 
\begin{eqnarray*}
A &:=& \{ \lim \nabla f(x) \ | \ x \to \infty \textrm{ and } x\not \in \Omega_f\}.
\end{eqnarray*}
From \cite[Proposition~5.3]{PHAMTS2023-4} we know that $\partial^{\mathrm{Clarke}}f(\infty) = \mathrm{co} A.$ Hence, it remains only to verify that 
$\mathrm{co} \partial f(\infty) = \mathrm{co} A,$ which follows directly from the following two claims.

\subsubsection*{Claim 1: $\partial f(\infty) \subset \mathrm{co} A$} 
Since the function $f$ is Lipschitz on $\mathbb{R}^n \setminus \B_R,$ we know that $\partial f(x) \subset \partial^{\mathrm{Clarke}} f(x)$ for all $x \in \mathbb{R}^n \setminus \B_R$ (see, for example, \cite[Exercise~1.79]{Mordukhovich2018}). It follows from Proposition~\ref{Proposition4.4} that
\begin{eqnarray*}
	\partial f(\infty) &=& \Limsup_{x \to \infty} \partial f(x) \ \subset \ \Limsup_{x \to \infty} \partial^{\mathrm{Clarke}} f(x). 
\end{eqnarray*}
Take any $u \in \partial f(\infty)$. There are sequences $x_k$ and $u_k \in \partial^{\mathrm{Clarke}} f(x_k) $ such that $x_k \to \infty$ and $u_k \to u.$ By \cite[Theorem~2.5.1]{Clarke1990} and 
 the Carath\'eodory theorem (see \cite[Theorem~2.29]{Rockafellar1970}), for each $k>0$, we find $\lambda_{ik} \in [0,1]$ and $x_{ik}' \in \mathbb{R}^n \setminus \Omega_f$, for $i=1,\ldots,n+1$ such that $\sum_{i = 1}^{n+1}\lambda_{ik}=1,$ $\|x_{ik}' - x_k\| < \frac{1}{k}$ and
\begin{eqnarray*}
\left\|\sum_{i = 1}^{n+1} \lambda_{ik} \nabla f(x_{ik}')-u_k \right\| &<& \frac{1}{k}.
\end{eqnarray*}
Then for all $i = 1, \ldots, n + 1,$ we have $x_{ik}' \to \infty$ as $k \to \infty,$ and so $\|\nabla f(x_{ik}')\| \le L$ for all $k$ sufficiently large. Passing to subsequences, we may assume that $\nabla f(x_{ik}') \to u_i$, $\lambda_{ik} \to \lambda_{i} \in [0,1]$ with $\sum_{i = 1}^{n+1}\lambda_{i}=1$. Then $u=\sum_{i = 1}^{n+1} \lambda_{i} u_i$, and so $u \in {\rm co}A$. Therefore, $\partial f(\infty) \subset \mathrm{co}A.$

\subsubsection*{Claim 2: $A \subset \partial f(\infty)$} Pick any $u \in A.$ Then there is a sequence $x_k \to \infty$ with $x_k \notin \Omega_f$ such that $\nabla f(x_k) \to u.$ Note that $\widehat{\partial}f(x_k)=\{\nabla f(x_k)\}$ (see, for example, \cite[Exercise~8.8]{Rockafellar1998}). In view of Proposition~\ref{Proposition4.4}, $u \in \partial f(\infty).$ Thus $A \subset \partial f(\infty),$ and so the claim follows.
\end{proof}

\begin{proposition}[chain rule]\label{Proposition5.7} 
Let $f \colon \mathbb{R}^n \to {\mathbb{R}}$ be a lower semi-continuous function and let $g_1, \ldots, g_n \colon \mathbb{R}^m \to \mathbb{R}$ be Lipschitz at infinity such that the map $g \colon \mathbb{R}^m \to \mathbb{R}^m, x \mapsto (g_1(x), \ldots, g_n(x)),$ is coercive (i.e., $g(x) \to \infty$ as $x \to \infty).$ Assume that the following condition holds
$$\Big[u := (u_1,\ldots, u_n) \in \partial^{\infty}f(\infty) \; {\rm and} \; 0 \in \sum_{i = 1}^{n}|u_i|\partial \left( (\mathrm{sign}\, u_i) g_i \right) (\infty) \Big] \implies u = 0.$$
Then we have 
\begin{eqnarray*}
\partial (f \circ g)(\infty) & \subset & \bigcup_{(u_1, \ldots, u_n) \in \partial f(\infty)} \sum_{i = 1}^n 
|u_i| \partial \big( (\mathrm{sign}\, u_i) g_i \big) (\infty),\\
\partial^{\infty} (f \circ g)(\infty) & \subset & \bigcup_{(u_1, \ldots, u_n) \in \partial f^{\infty}(\infty)} \sum_{i = 1}^n 
|u_i| \partial \big( (\mathrm{sign}\, u_i) g_i \big) (\infty). 
\end{eqnarray*}
\end{proposition}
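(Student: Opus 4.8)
The plan is to establish both inclusions by reducing them to the finite-dimensional chain rule (Lemma~\ref{Lemma2.10}) and sum rule (Lemma~\ref{Lemma2.9}) at points far from the origin, and then passing to the limit through the outer-limit descriptions of Proposition~\ref{Proposition4.4}. Two structural facts will be used throughout. Since each $g_i$ is Lipschitz at infinity, there is a radius $R_1 > 0$ on whose complement every $g_i$ is locally Lipschitz and every set $\partial\big((\mathrm{sign}\,u_i)g_i\big)(x)$ is contained in a fixed ball $\mathbb{B}_L$; and the coercivity of $g$ guarantees $g(x) \to \infty$ whenever $x \to \infty$, so that the subdifferentials of $f$ are always evaluated at points tending to infinity.

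First I would transfer the qualification condition from infinity to a neighborhood of infinity. The claim is that there is $R_0 > 0$ such that, for every $x$ with $\|x\| > R_0$, the conditions $u \in \partial^{\infty} f(g(x))$ and $0 \in \sum_{i=1}^n |u_i|\,\partial\big((\mathrm{sign}\,u_i)g_i\big)(x)$ force $u = 0$. Arguing by contradiction, a failure produces $x_k \to \infty$ and $u^{(k)} \neq 0$ witnessing the two conditions; after normalizing $\|u^{(k)}\| = 1$ and passing to a subsequence $u^{(k)} \to u$ with $\|u\| = 1$, the relation $u^{(k)} \in \partial^{\infty} f(g(x_k))$ together with $g(x_k) \to \infty$ and the inclusion in~\eqref{Eqn4} gives $u \in \partial^{\infty} f(\infty)$. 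Choosing $w_i^{(k)} \in \partial\big((\mathrm{sign}\,u_i^{(k)})g_i\big)(x_k)$ with $\sum_i |u_i^{(k)}|\,w_i^{(k)} = 0$, these are bounded by $L$; for indices with $u_i \neq 0$ the sign eventually stabilizes and Proposition~\ref{Proposition4.4} sends $w_i^{(k)}$ to some $w_i \in \partial\big((\mathrm{sign}\,u_i)g_i\big)(\infty)$, while for indices with $u_i = 0$ the weight $|u_i^{(k)}|$ tends to $0$. Passing to the limit yields $0 = \sum_i |u_i|\,w_i \in \sum_i |u_i|\,\partial\big((\mathrm{sign}\,u_i)g_i\big)(\infty)$ with $\|u\| = 1$, contradicting the standing qualification hypothesis. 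Since $\partial\langle u, g\rangle(x) \subset \sum_i |u_i|\,\partial\big((\mathrm{sign}\,u_i)g_i\big)(x)$ by the sum rule, this finite qualification in particular validates the hypothesis of Lemma~\ref{Lemma2.10} at every such $x$.

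With the qualification secured, for $\|x\| > \max\{R_0,R_1\}$ the chain rule of Lemma~\ref{Lemma2.10} followed by the sum rule of Lemma~\ref{Lemma2.9} gives
$$\partial(f\circ g)(x) \subset \bigcup_{u \in \partial f(g(x))} \partial\langle u, g\rangle(x) \subset \bigcup_{u \in \partial f(g(x))} \sum_{i=1}^n |u_i|\,\partial\big((\mathrm{sign}\,u_i)g_i\big)(x),$$
and likewise with $\partial$ replaced by $\partial^{\infty}$ and $\partial f(g(x))$ by $\partial^{\infty} f(g(x))$. To obtain the first inclusion at infinity I take $w \in \partial(f\circ g)(\infty)$, pick $x_k \to \infty$ and $w_k \in \partial(f\circ g)(x_k)$ with $w_k \to w$ (Proposition~\ref{Proposition4.4}), and write $w_k \in \sum_i |u_i^{(k)}|\,\partial\big((\mathrm{sign}\,u_i^{(k)})g_i\big)(x_k)$ for some $u^{(k)} \in \partial f(g(x_k))$. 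The decisive dichotomy is whether $u^{(k)}$ stays bounded. If it does, a subsequence converges to $u \in \partial f(\infty)$ (using $g(x_k) \to \infty$ and~\eqref{Eqn3}), and the same sign-stabilization limit as above places $w$ in $\sum_i |u_i|\,\partial\big((\mathrm{sign}\,u_i)g_i\big)(\infty)$, which is the desired conclusion. If $u^{(k)}$ is unbounded, I normalize by $\|u^{(k)}\| \to \infty$: then $u^{(k)}/\|u^{(k)}\| \to u$ with $\|u\| = 1$ lies in $\partial^{\infty} f(\infty)$ by~\eqref{Eqn4}, while $w_k/\|u^{(k)}\| \to 0$ because $w_k$ is bounded, and dividing the sum representation by $\|u^{(k)}\|$ yields $0 = \sum_i |u_i|\,w_i$ with $\|u\| = 1$, again contradicting the qualification condition. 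Hence the unbounded case cannot occur and the first inclusion follows.

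The singular inclusion is handled by the same pattern, now using the description $\partial^{\infty}(f\circ g)(\infty) = \Limsup_{x\to\infty,\,r\searrow 0} r\,\partial(f\circ g)(x)$ from~\eqref{Eqn4}: I take $x_k \to \infty$, $r_k \searrow 0$ and $w_k \in \partial(f\circ g)(x_k)$ with $r_k w_k \to w$, apply the finite chain rule to $w_k$, and then examine whether $r_k u^{(k)}$ is bounded, where $u^{(k)} \in \partial f(g(x_k))$. Boundedness gives a limit $u \in \partial^{\infty} f(\infty)$ and, via $r_k|u_i^{(k)}| = |r_k u_i^{(k)}| \to |u_i|$, the inclusion $w \in \sum_i |u_i|\,\partial\big((\mathrm{sign}\,u_i)g_i\big)(\infty)$; unboundedness again forces a unit vector $u \in \partial^{\infty} f(\infty)$ with $0 \in \sum_i |u_i|\,\partial\big((\mathrm{sign}\,u_i)g_i\big)(\infty)$, contradicting the hypothesis. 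The main obstacle throughout is the bookkeeping for the sign function: as $u^{(k)} \to u$ the quantities $\mathrm{sign}\,u_i^{(k)}$ need not converge when $u_i = 0$, and the argument only closes because in precisely those coordinates the weight $|u_i^{(k)}|$ vanishes in the limit while the uniform boundedness of $\partial\big((\mathrm{sign}\,u_i^{(k)})g_i\big)(x_k)$ coming from Lipschitzness at infinity kills the contribution. Keeping this coupling consistent across the bounded/unbounded dichotomy and across the limiting and singular cases is the delicate part.
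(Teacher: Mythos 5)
Your proposal is correct and takes essentially the same route as the paper: transfer the qualification condition to a neighborhood of infinity by a normalization--contradiction argument, apply Lemma~\ref{Lemma2.10} together with the sum rule of Lemma~\ref{Lemma2.9} pointwise for large $\|x\|$, and pass to the limit via Proposition~\ref{Proposition4.4}. The bounded/unbounded dichotomy and sign-stabilization bookkeeping you spell out---including working with the equality $\partial^{\infty}(f\circ g)(\infty)=\Limsup_{x\to\infty,\,r\searrow 0} r\,\partial(f\circ g)(x)$ rather than the (possibly strict) inclusion $\Limsup_{x\to\infty}\partial^{\infty}(f\circ g)(x)\subset\partial^{\infty}(f\circ g)(\infty)$---is exactly the content the paper compresses into ``letting $x\to\infty$ and applying Proposition~\ref{Proposition4.4}.''
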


\begin{proof} 
We first claim that there is a constant $R > 0$ such that for all $x \in \mathbb{R}^n \setminus \B_R,$ the following condition satisfies:
\begin{eqnarray}\label{Eqn8}
\Big[y \in \partial^{\infty}f(g(x)) \ \textrm{ and } \ 0 \in \partial \langle y, g \rangle (x) \Big] \quad & \Longrightarrow&  \quad y = 0. 
\end{eqnarray}
If this were not true, there would exist sequences $x_k \to \infty$ and $u_k \neq 0$ satisfying 
$u_k \in \partial^{\infty}f(g(x_k))$ and $0 \in \partial \langle u_k, g \rangle (x_k)$.  By definition, $\dfrac{u_k}{\|u_k\|} \in \partial^{\infty}f(g(x_k))$. Passing to a subsequence and noting that $g(x_k) \to \infty$ as $x_k \to \infty,$ we may assume that the sequence $\dfrac{u_k}{\|u_k\|}$ converges to some $u^* := (u^*_1, \ldots, u^*_n).$ By Proposition~\ref{Proposition4.4}, $u^* \in \partial^{\infty}f(\infty)\setminus\{0\}.$

On the other hand, our assumptions ensure that the functions $g_1, \ldots, g_n$ are Lipschitz on $\mathbb{R}^n \setminus \B_R$ (perhaps after increasing $R$). It follows from Lemma~\ref{Lemma2.9} that for all $k$ sufficiently large,
\begin{eqnarray*}
0 \ \in \ \partial \langle u_k, g \rangle (x_k) &  = & \partial \left( \sum_{i = 1}^{n}u_{ik}g_i  \right)(x_k) \\
& \subset & \sum_{i = 1}^{n} \partial (u_{ik}g_i)(x_k) \ = \ \sum_{i = 1}^{n}|u_{ik}|\partial \left( (\mathrm{sign}\, u_{ik}) g_i \right) (x_k),
\end{eqnarray*}
where $u_k := (u_{1k}, \ldots, u_{nk}) \in \mathbb{R}^n$ for all $k.$ By dividing both sides of these by $\|u_k\|$ and letting $k \to \infty,$ we get
\begin{eqnarray*}
0 &\in & \sum_{i = 1}^{n}|u^*_i|\partial \left( (\mathrm{sign}\, u^*_i) g_i \right) (\infty),
\end{eqnarray*}
which contradicts our assumption.

Now, invoking  Lemma~\ref{Lemma2.10} and taking \eqref{Eqn8} into account, we have for all $x \in \mathbb{R}^n \setminus \mathbb{B}_R,$
\begin{eqnarray*}
\partial \big( f \circ g \big) (x)
&\subset & \bigcup_{(u_1, \ldots, u_n)  \in \partial f (g(x))} \partial \left( \sum_{i = 1}^n u_i g_i \right) (x) \\
&\subset & \bigcup_{(u_1, \ldots, u_n)  \in \partial f (g(x))} \sum_{i = 1}^n  \partial \big( u_i g_i \big) (x) \\
&\subset & \bigcup_{(u_1, \ldots, u_n)  \in \partial f (g(x))} \sum_{i = 1}^n  |u_i| \partial \big(  (\mathrm{sign}\, u_i)  g_i \big) (x).
\end{eqnarray*}
Similarly, we also have 
\begin{eqnarray*}
\partial^\infty \big( f \circ g \big) (x)
&\subset & \bigcup_{(u_1, \ldots, u_n)  \in \partial^\infty f (g(x))} \sum_{i = 1}^n  |u_i| \partial \big(  (\mathrm{sign}\, u_i)  g_i \big) (x).
\end{eqnarray*}
Letting $x \to \infty$ and applying Proposition~\ref{Proposition4.4}, we get the desired inclusions.
\end{proof}

The coerciveness of the map $g$ in Proposition~\ref{Proposition5.7} is essential as seen from the  following example.

\begin{example}{\rm 
Let $f, g \colon \mathbb{R} \to \mathbb{R}$ be functions given by 
\begin{eqnarray*}
f(y) := 
\begin{cases}
-\ln y &  \textrm{ if } y > 0, \\
0 & \textrm{ otherwise,} 
\end{cases}
\quad \textrm{ and } \quad
g(x) := 
\begin{cases}
e^{-x} &  \textrm{ if } x > 0, \\
-1 & \textrm{ otherwise.} 
\end{cases}
\end{eqnarray*}
We have $\partial f(\infty) = \{0\}$ while $\partial (f \circ g)(\infty) = \{0, 1\};$ the function $g$ is Lipschitz at infinity but not coercive. 
}\end{example}

\begin{proposition}\label{Proposition5.9} 
Let $g_i, h_j\colon \mathbb{R}^n \to \mathbb{R}, i = 1, \ldots, p, \ j = 1, \ldots q$ be Lipschitz at infinity such that the set
\begin{eqnarray*}
\Omega &:=& \{x \in \mathbb{R}^n \mid g_i(x)\leq 0, i=1,\ldots,p, h_j(x)=0,j =1,\ldots,q\}
\end{eqnarray*}
is nonempty and unbounded. If the following constraint qualification at infinity holds
\begin{equation}\label{Eqn9}
0 \ \notin \ {\rm co}\left\{\bigcup_{i=1}^p\partial g_i(\infty),\bigcup_{j=1}^q \Big( \partial h_j(\infty) \cup \partial (-h_j)(\infty) \Big) \right\},
\end{equation} 
then
\begin{equation*} 
N_{\Omega}(\infty) \ \subset \ {\rm pos}\left\{\bigcup_{i=1}^p\partial g_i(\infty),\bigcup_{j=1}^q \Big( \partial h_j(\infty) \cup \partial (-h_j)(\infty) \Big) \right\}.
\end{equation*}	
\end{proposition}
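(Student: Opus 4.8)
The plan is to follow the three-step scheme already employed for the sum rule (Proposition~\ref{Proposition4.9}) and the chain rule (Proposition~\ref{Proposition5.7}): first convert the qualification condition~\eqref{Eqn9} at infinity into a \emph{uniform} constraint qualification holding at every point of $\Omega$ far from the origin; then apply, at each such point, the classical formula for the limiting normal cone to a Lipschitzian constraint system; and finally pass to the limit using Propositions~\ref{Proposition3.5} and~\ref{Proposition4.4}. To organize the bookkeeping, write $S(x) := \bigcup_{i=1}^p \partial g_i(x) \cup \bigcup_{j=1}^q \big(\partial h_j(x) \cup \partial(-h_j)(x)\big)$ and let $S(\infty)$ denote the corresponding union of subdifferentials at infinity, so that~\eqref{Eqn9} reads $0 \notin \mathrm{co}\, S(\infty)$ and the assertion is $N_\Omega(\infty) \subset \mathrm{pos}\, S(\infty)$. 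Note that the union of $\partial h_j$ with $\partial(-h_j)$ (rather than $\pm \partial h_j$) is forced here because the limiting subdifferential is not symmetric.

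For the first step I would show, by contradiction, that there is $R > 0$ such that at every $x \in \Omega$ with $\|x\| > R$ the standard constraint qualification holds: the only coefficients $\lambda_i \ge 0$ and $\mu_j \ge 0$ producing $0 \in \sum_{i=1}^p \lambda_i \partial g_i(x) + \sum_{j=1}^q \mu_j\big(\partial h_j(x) \cup \partial(-h_j)(x)\big)$ are the trivial ones. If this failed, there would be $x_k \in \Omega$ with $x_k \to \infty$ carrying nontrivial such coefficients; normalizing their total mass to $1$ and using that each $g_i,h_j$ is Lipschitz at infinity (so that the participating subgradients are bounded for large $k$, by \cite[Theorem~1.22]{Mordukhovich2018}), I would extract convergent subsequences. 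Proposition~\ref{Proposition4.4} then lands every limiting subgradient in the matching subdifferential at infinity, while the normalization forces the limiting coefficients to be nonnegative and to sum to $1$; this produces a nontrivial convex combination of elements of $S(\infty)$ equal to $0$, i.e. $0 \in \mathrm{co}\, S(\infty)$, contradicting~\eqref{Eqn9}.

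With this uniform qualification in hand, the classical calculus rule for limiting normal cones to constraint sets (see \cite{Mordukhovich2006, Mordukhovich2018}) gives $N_\Omega(x) \subset \mathrm{pos}\, S(x)$ for all $x \in \Omega$ with $\|x\| > R$. To finish, take $\xi \in N_\Omega(\infty)$; by Proposition~\ref{Proposition3.5} there are $x_k \in \Omega$ with $x_k \to \infty$ and $\xi_k \in N_\Omega(x_k)$ with $\xi_k \to \xi$. For large $k$ I may thus write $\xi_k = \sum_i \lambda_{ik}\,\xi_{ik} + \sum_j \mu_{jk}\,\zeta_{jk}$ with nonnegative coefficients and with $\xi_{ik} \in \partial g_i(x_k)$, $\zeta_{jk} \in \partial h_j(x_k) \cup \partial(-h_j)(x_k)$, the latter bounded by Lipschitzness at infinity. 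I would then show the total coefficient mass stays bounded; otherwise, dividing by it and passing to the limit would again (via Proposition~\ref{Proposition4.4}) exhibit a nontrivial convex combination of elements of $S(\infty)$ equal to $0$, contradicting~\eqref{Eqn9}. Once the coefficients are bounded, extracting convergent subsequences and applying Proposition~\ref{Proposition4.4} one last time gives $\xi = \sum_i \lambda_i\,\xi_i + \sum_j \mu_j\,\zeta_j \in \mathrm{pos}\, S(\infty)$.

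The main obstacle is precisely the boundedness of the multipliers in the last step: nothing a priori prevents the representing coefficients of $\xi_k$ from blowing up as $x_k \to \infty$, and it is exactly the qualification condition~\eqref{Eqn9} that rules this out, through the same normalization-and-contradiction mechanism used in the Case~2 analyses of Propositions~\ref{Proposition4.9} and~\ref{Proposition5.7}. A secondary technical point is the bookkeeping of the active inequality indices $I(x_k)$ (which may vary with $k$) and of the choice between $\partial h_j$ and $\partial(-h_j)$; both are handled painlessly by passing to a subsequence along which the active set and the relevant signs are constant, since only finitely many constraints are involved and any combination over a subset still lies in $\mathrm{co}\, S(\infty)$, respectively $\mathrm{pos}\, S(\infty)$.
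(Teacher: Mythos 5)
Your proposal is correct and follows essentially the same route as the paper's proof: extract sequences $x_k\in\Omega$, $x_k\to\infty$, $u_k\in N_\Omega(x_k)$, $u_k\to u$ via Proposition~\ref{Proposition3.5}, represent $u_k$ through the pointwise inclusion $N_\Omega(x_k)\subset \mathrm{pos}\,S(x_k)$ from \cite[Corollary~4.36]{Mordukhovich2006}, normalize by the total multiplier mass $s_k$, use Lipschitzness at infinity to bound the participating subgradients, and dispatch the unbounded-$s_k$ case by passing to the limit with Proposition~\ref{Proposition4.4} to contradict \eqref{Eqn9} --- exactly the paper's Case~1/Case~2 dichotomy. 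The only difference is your preliminary step establishing a uniform pointwise qualification at all far points, which the paper leaves implicit when invoking Corollary~4.36 at each $x_k$; making it explicit (by the same normalization-and-contradiction mechanism) is a harmless, indeed clarifying, addition rather than a departure.
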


\begin{proof}
Take any $u \in  N_{\Omega}(\infty).$ Suppose $u \ne 0,$ since everything is trivial otherwise. By Proposition~\ref{Proposition3.5}, there are sequences $x_k \in \Omega$  and $u_k \in N_{\Omega}(x_k)$ such that $x_k \to \infty$ and $u_k \to u.$ In view of \cite[Corollary~4.36]{Mordukhovich2006}, we have 
\begin{eqnarray*}
N_{\Omega}(x_k) & \subset & {\rm pos}\left\{\bigcup_{i=1}^p \partial g_i(x_k),\bigcup_{j=1}^q \Big( \partial h_j(x_k) \cup \partial (-h_j)(x_k) \Big) \right\}.
\end{eqnarray*}
Hence, there exist sequences $\lambda_{ik} \geq 0$, $v_{ik} \in \partial g_i(x_k) $ for $i=1,\ldots,p$, $\mu_{jk}\geq 0$, and $w_{jk} \in  \partial h_j(x_k) \cup \partial (-h_j)(x_k)$ for  $j=1,\ldots,q$ such that
\begin{eqnarray}\label{Eqn10}
u_k &=&  \sum_{i=1}^{p}\lambda_{ik}v_{ik}+\sum_{j=1}^{q}\mu_{jk}w_{jk}. 
\end{eqnarray}
For all $k$ sufficiently large, we have $u_k \neq 0$ and so $s_k:=\sum_{i=1}^{p}\lambda_{ik}+\sum_{j=1}^{q}\mu_{jk} > 0.$ By dividing both sides of the above equality, we get
\begin{eqnarray*}
\dfrac{u_k}{s_k} &=& \sum_{i=1}^{p}\dfrac{\lambda_{ik}}{s_k}v_{ik}+\sum_{j=1}^{q}\dfrac{\mu_{jk}}{s_k}w_{jk}. 
\end{eqnarray*}
Note that $\dfrac{\lambda_{ik}}{s_k}, \dfrac{\mu_{jk}}{s_k} \in [0,1]$. Passing to subsequences if necessary, we can assume that $\dfrac{\lambda_{ik}}{s_k} \to \lambda_{i} \geq 0$ and $\dfrac{\mu_{jk}}{s_k} \to \mu_j \geq 0.$ Clearly,  $\sum_{i=1}^{p}\lambda_{i}+\sum_{j=1}^{q}\mu_{j}=1.$ Since $g_i$ and $h_j$ are Lipschitz at infinity, 
the sequences $v_{ik}$ and $w_{jk}$ are bounded and so we may assume that they converge to some $v_i$ and $w_j.$ Certainly 
$v_i \in \partial g_i(\infty)$ and $w_j \in \big(\partial h_j(\infty) \cup \partial (-h_j)(\infty)\big)$ (in view of Proposition~\ref{Proposition4.4}). There are two cases to be considered.

\subsubsection*{Case 1: the sequence $s_k$ is bounded} We can assume that $s_k \to s \geq 0$. If $s = 0,$ then we can see from \eqref{Eqn10} that $u = 0,$ a contradiction. Thus $s > 0$ and so
\begin{eqnarray*}
\dfrac{u}{s} &=& \sum_{i=1}^{p}\lambda_{i}v_{i}+\sum_{j=1}^{q}\mu_j w_{j},
\end{eqnarray*}
as required.

\subsubsection*{Case 2: the sequence $s_k$ is unbounded} 
We have $\dfrac{u_k}{s_k} \to 0$ and so
\begin{eqnarray*}
0 &=& \sum_{i=1}^{p}\lambda_{i}v_{i}+\sum_{j=1}^{q}\mu_j w_{j},
\end{eqnarray*}
in contradiction to our constraint qualification. 
\end{proof}

\section{Applications in optimization}\label{Section6}

In this section, let $f \colon \mathbb{R}^n \to \overline{\mathbb{R}}$ be a lower semi-continuous function and let $\Omega \subset \mathbb{R}^n$ be a closed set such that the following conditions hold:
\begin{enumerate}[({\rm A}1)]
	\item $\mathrm{dom} f \cap \Omega$ is unbounded.
	\item $\partial^{\infty}f(\infty) \cap \big(-N_{\Omega}(\infty) \big) =\{0\}.$
	\item $f$ is bounded from below on $\Omega,$ i.e., $f_* := \inf_{x\in \Omega} f(x)$ is finite.
\end{enumerate}
Consider the problem
\begin{equation} \label{Problem}
\mathrm{minimize } \ f (x) \quad \textrm{ over } \quad x \in \Omega, \tag{P}
\end{equation}
and let $\mathrm{Sol}$ denote its optimal solution set.

\subsection{Optimality conditions at infinity}

The following is a necessary optimality condition at infinity to optimization problems (see also \cite{PHAMTS2023-4, PHAMTS2019-1}).

\begin{theorem}[first order condition for optimality] \label{Theorem6.1} 
	If $f$ does not attain its infimum on $\Omega$ then 
	\begin{eqnarray*}
		0 &\in& \partial f(\infty) + N_{\Omega}(\infty).
	\end{eqnarray*}
\end{theorem}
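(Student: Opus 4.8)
The plan is to reformulate the constrained problem as the unconstrained minimization of $g := f + \delta_{\Omega}$, which is proper (by (A1)), lower semi-continuous (as $f$ is lsc and $\Omega$ is closed) and bounded from below by $f_*$ (by (A3)), and then to run the Ekeland variational principle along an unbounded minimizing sequence. First I would argue that any minimizing sequence must escape to infinity: if $x_k \in \Omega$ with $f(x_k) \to f_*$ had a bounded subsequence, closedness of $\Omega$ and lower semi-continuity of $f$ would produce a minimizer, contradicting the hypothesis that the infimum is not attained. So I may assume $\|x_k\| \to \infty$ with $g(x_k) = f(x_k) \le f_* + \epsilon_k$ for some $\epsilon_k \searrow 0$. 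Applying Lemma~\ref{Lemma2.11} with $\lambda_k := \sqrt{\epsilon_k}$ yields points $z_k$ with $\|z_k - x_k\| \le \sqrt{\epsilon_k} \to 0$ (hence $\|z_k\| \to \infty$ and $z_k \in \mathrm{dom} f \cap \Omega$) such that $z_k$ globally minimizes $x \mapsto g(x) + \delta_k \|x - z_k\|$, where $\delta_k := \epsilon_k/\lambda_k = \sqrt{\epsilon_k} \to 0$.

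Next I would extract an approximate subgradient. By the Fermat rule (Lemma~\ref{Lemma2.8}) and the sum rule (Lemma~\ref{Lemma2.9}, applicable since $\delta_k\|\cdot - z_k\|$ is globally Lipschitz), together with $\partial(\delta_k\|\cdot - z_k\|)(z_k) = \delta_k \mathbb{B}$, one gets $w_k \in \partial g(z_k)$ with $\|w_k\| \le \delta_k \to 0$. To split $\partial g(z_k)$ I first establish, from (A2), a uniform pointwise qualification: there is $R > 0$ so that $\partial^{\infty} f(z) \cap \big(-N_{\Omega}(z)\big) = \{0\}$ whenever $\|z\| > R$. This follows by the now-standard normalization-and-limit argument (as in the proof of Proposition~\ref{Proposition4.9}): a sequence of unit counterexamples $z_k \to \infty$, $\eta_k \in \partial^{\infty} f(z_k) \cap (-N_{\Omega}(z_k))$, would converge to a unit vector lying in $\partial^{\infty} f(\infty) \cap (-N_{\Omega}(\infty))$ by \eqref{Eqn4} and Proposition~\ref{Proposition3.5}, contradicting (A2). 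For $\|z_k\| > R$, the limiting sum rule \cite[Theorem~2.19]{Mordukhovich2018} and Lemma~\ref{Lemma2.7} give $\partial g(z_k) \subset \partial f(z_k) + N_{\Omega}(z_k)$, so I may write $w_k = u_k + v_k$ with $u_k \in \partial f(z_k)$ and $v_k \in N_{\Omega}(z_k)$, and $u_k + v_k \to 0$.

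Finally I would pass to the limit with a dichotomy on the sequence $(u_k)$. If $(u_k)$ is bounded, then so is $(v_k)$; after passing to subsequences $u_k \to u$, $v_k \to v$ with $u + v = 0$, and Proposition~\ref{Proposition4.4} gives $u \in \partial f(\infty)$ while Proposition~\ref{Proposition3.5} gives $v \in N_{\Omega}(\infty)$, whence $0 = u + v \in \partial f(\infty) + N_{\Omega}(\infty)$, as desired. If instead $(u_k)$ is unbounded, then $(v_k)$ is unbounded too, and rescaling by $\|v_k\|$ (using $u_k+v_k\to 0$ bounded) yields subsequential limits $u, v$ with $u = -v$, $\|v\| = 1$; here $u \in \partial^{\infty} f(\infty)$ via the characterization \eqref{Eqn4} (since $u_k/\|v_k\| = r_k u_k$ with $r_k = 1/\|v_k\| \searrow 0$) and $v \in N_{\Omega}(\infty)$, producing a nonzero element of $\partial^{\infty} f(\infty) \cap (-N_{\Omega}(\infty))$ and contradicting (A2). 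Thus only the bounded case survives and the conclusion follows. The main obstacle I anticipate is precisely this last dichotomy: controlling the multipliers $u_k, v_k$ at infinity and using the qualification (A2) exactly to exclude the unbounded alternative, so that the finite-dimensional calculus at the Ekeland points transfers correctly to the ``at infinity'' objects through the Painlev\'e--Kuratowski limits in Propositions~\ref{Proposition3.5} and \ref{Proposition4.4}.
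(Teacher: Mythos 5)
Your proof is correct and follows essentially the same route as the paper's: Ekeland's variational principle applied to $f+\delta_{\Omega}$ along an unbounded minimizing sequence, the Fermat rule and a sum rule with the Lipschitz norm term at the Ekeland points, and passage to infinity under the qualification~(A2) via the bounded/unbounded dichotomy. The only structural difference is that where the paper first concludes $0\in\partial(f+\delta_{\Omega})(\infty)$ and then cites the at-infinity sum rule (Proposition~\ref{Proposition4.9} together with Example~\ref{Example4.5}), you inline that proposition's proof---the uniform pointwise qualification $\partial^{\infty}f(x)\cap\bigl(-N_{\Omega}(x)\bigr)=\{0\}$ for $\|x\|>R$, the pointwise splitting $\partial(f+\delta_{\Omega})(x)\subset\partial f(x)+N_{\Omega}(x)$, and the rescaling dichotomy---which is exactly the argument the paper uses in Proposition~\ref{Proposition4.9} and again in the proof of Theorem~\ref{Theorem6.4}; you also make explicit the escape-to-infinity of minimizing sequences, which the paper leaves implicit.
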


\begin{proof} 
	We begin with the following case.
	\subsubsection*{Case 1: $\Omega$ is the whole space $\mathbb{R}^n$}
	In this case, $N_{\Omega}(\infty) = \{0\}.$ Since $f$ does not attain its infimum on $\mathbb{R}^n,$ there exists a sequence $x_k \in \mathbb{R}^n$ tending to infinity such that
	\begin{eqnarray*}
		f(x_k) &\le& \inf_{x \in \mathbb{R}^n} f(x) +\dfrac{1}{k^2}.
	\end{eqnarray*}
	Employing the Ekeland variational principle (Lemma~\ref{Lemma2.11}), we find $\overline{x}_k \in \mathbb{R}^n$ for $k > 0$ such that
	\begin{eqnarray*}
		\|x_k - \overline{x}_k\| & \le & \dfrac{1}{k}, \\
		f(\overline{x}_k) &\le& f(x)+\dfrac{1}{k}\|x-\overline{x}_k\| \quad \textrm{for all } \quad x \in \mathbb{R}^n.
	\end{eqnarray*}
	The first inequality implies that the sequence $\overline{x}_k$ tends to infinity as the sequence $x_k$ tends to infinity, while the second inequality says that $\overline{x}_k$ is a global minimizer of the lower semi-continuous function
	$$\mathbb{R}^n \to \overline{\mathbb{R}} \quad x \mapsto f(x)+\dfrac{1}{k}\|x-\overline{x}_k\|.$$ 
	By the Fermat rule (Lemma~\ref{Lemma2.8}), we obtain
	\begin{eqnarray*}
		0 &\in& \partial \left( f+ \dfrac{1}{k}\|\cdot-\overline{x}_k\|\right)(\overline{x}_k).
	\end{eqnarray*}
	Since the function $\|\cdot-\overline{x}_k\|$ is 1-Lipschitz, the sum rule (see \cite[Theorem~2.19]{Mordukhovich2018}) gives
	\begin{eqnarray*}
		0 &\in& \partial f(\overline{x}_k)+ \dfrac{1}{k}\partial (\|\cdot-\overline{x}_k\|)(\overline{x}_k).
	\end{eqnarray*}
	Note that $\partial (\|\cdot-\overline{x}_k\|)(\overline{x}_k)=\mathbb{B}$. Hence, 
	$0 \in \partial f(\overline{x}_k)+ \dfrac{1}{k}\mathbb{B},$
	and so there is ${u}_k \in \partial f(\overline{x}_k)$ such that $\|{u}_k\| \leq \dfrac{1}{k}$. Letting $k \to \infty$ and applying Proposition~\ref{Proposition4.4}, we obtain  $ 0\in \partial f(\infty).$
	
	\subsubsection*{Case 2: $\Omega$ is an arbitrary subset of $\mathbb{R}^n$}
	We have
	\begin{eqnarray*}
		\inf_{x \in \mathbb{R}^n} \left(f + \delta_{\Omega}\right)(x) & = & \inf_{x \in \Omega} f(x) \ > \ -\infty,
	\end{eqnarray*}
	where $\delta_{\Omega} \colon \mathbb{R}^n \to \overline{\mathbb{R}}$ stands for the indicator function of the set $\Omega.$ Note that the (lower semi-continuous) function $f + \delta_{\Omega}$ does not attain its infimum on $\mathbb{R}^n.$ Therefore, $0 \in \partial (f + \delta_{\Omega})(\infty)$ (by the argument employed in the first case). This, together with Proposition~\ref{Proposition4.9}, Example~\ref{Example4.5} and the assumption~(A2), yields $0 \in \partial f (\infty) + N_{\Omega}(\infty).$
\end{proof}

\begin{example}{\rm 
(i) Consider the smooth function $f \colon \mathbb{R}^2 \to \mathbb{R}$,  $(x_1,x_2) \mapsto e^{x_1}+x_2^2$. Clearly, $\inf_{(x_1, x_2) \in \mathbb{R}^2} f(x_1, x_2) = 0$ and $f$ does not attain its infimum. For each integer $k,$ we have $\nabla f(-k, 0) = (e^{-k}, 0).$ Letting $k \to \infty,$ we get $(0,0) \in \partial f(\infty).$
		
(ii) Consider the smooth function $f \colon \mathbb{R}^2 \to \mathbb{R}, (x_1, x_2) \mapsto (x_1 x_2 - 1)^2 + x_1^2.$ Clearly, 
$$\inf_{(x_1, x_2) \in \mathbb{R}^2} f(x_1, x_2) = 0$$ 
and $f$ does not attain its infimum. For each integer $k,$ we have $\nabla f(\frac{1}{k}, k) = (\frac{2}{k}, 0).$ Letting $k \to \infty,$ we obtain $(0,0) \in \partial f(\infty).$

(iii) Let $f(x_1, x_2) := x_2$ and $\Omega := \{(x_1,x_2)\in \mathbb{R}^2 \mid  x_1^2x_2 \ge  1 \}.$ Then it is easy to see that $\inf_{(x_1, x_2) \in \Omega} f(x_1, x_2) = 0$ and $f$ does not attain its infimum on $\Omega.$ Moreover, we have
		\begin{eqnarray*}
			\partial f(\infty) &=& \{(0,1)\}, \\ 
			\partial^{\infty} f(\infty) &=& \{(0,0)\}, \\
			N_\Omega(\infty) &=& \{0\}\times (-\mathbb{R}_{+}) \cup \mathbb{R}\times \{0\}.
		\end{eqnarray*}
		Hence, the condition $\partial^{\infty}f(\infty)\cap (-N_{\Omega}(\infty))=\{0\}$ is fulfilled and $(0, 0) \in \partial f(\infty) + N_\Omega(\infty)$. 
}\end{example}

When Theorem~\ref{Theorem6.1} is combined with Proposition~\ref{Proposition5.9} we obtain a powerful Lagrange multiplier rule at infinity.

\begin{corollary}[Lagrange multipliers]
Consider the problem~\eqref{Problem} where the constraint set $\Omega$ is given by
\begin{eqnarray*}
\Omega &:=& \{x \in \mathbb{R}^n \mid g_i(x)\leq 0, i=1,\ldots,p, h_j(x)=0,j =1,\ldots,q\}
\end{eqnarray*}
with $g_i, h_j\colon \mathbb{R}^n \to \mathbb{R}, i = 1, \ldots, p, \ j = 1, \ldots q$ being Lipschitz at infinity such that 
the constraint qualification at infinity~\eqref{Eqn9} holds. If $f$ does not attain its infimum on $\Omega$ then there exist  non-negative real numbers $\lambda_i, i = 1, \ldots, p$ and $\mu_j, j = 1, \ldots, q$ such that 
	\begin{equation*}
	0 \in  \partial f(\infty) + \sum_{i=1}^p \lambda_i \partial g_i(\infty) + \sum_{j=1}^q \mu_j \Big( \partial h_j(\infty) \cup \partial (-h_j)(\infty) \Big). \end{equation*}
\end{corollary}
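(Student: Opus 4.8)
The plan is to read the statement off as a direct combination of the first-order optimality condition at infinity (Theorem~\ref{Theorem6.1}) with the normal-cone estimate at infinity for constraint sets (Proposition~\ref{Proposition5.9}). Since the standing hypotheses (A1)--(A3) of this section are in force and $f$ fails to attain its infimum on $\Omega$, Theorem~\ref{Theorem6.1} applies verbatim and yields
$$0 \in \partial f(\infty) + N_\Omega(\infty).$$
Accordingly, I would first fix $u \in \partial f(\infty)$ and $\xi \in N_\Omega(\infty)$ with $u + \xi = 0$, so that $-u = \xi \in N_\Omega(\infty)$, and reduce the whole problem to representing this single vector $\xi$.

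Next I would feed $\xi$ into Proposition~\ref{Proposition5.9}. Its hypotheses are all available: the functions $g_i, h_j$ are Lipschitz at infinity by assumption, the set $\Omega$ is nonempty and unbounded (unboundedness follows from (A1), which forces $\mathrm{dom} f \cap \Omega$, hence $\Omega$, to be unbounded), and the constraint qualification~\eqref{Eqn9} is imposed in the statement. Proposition~\ref{Proposition5.9} then gives
$$N_\Omega(\infty) \subset \mathrm{pos}\left\{\bigcup_{i=1}^p \partial g_i(\infty), \ \bigcup_{j=1}^q \big(\partial h_j(\infty) \cup \partial(-h_j)(\infty)\big)\right\},$$
so in particular $\xi$ lies in this positive hull. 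Reading off non-negative multipliers $\lambda_i, \mu_j$ together with subgradients $v_i \in \partial g_i(\infty)$ and $w_j \in \partial h_j(\infty)\cup\partial(-h_j)(\infty)$ representing $\xi$, and substituting into $0 = u + \xi$, produces
$$0 \in \partial f(\infty) + \sum_{i=1}^p \lambda_i \partial g_i(\infty) + \sum_{j=1}^q \mu_j\big(\partial h_j(\infty) \cup \partial(-h_j)(\infty)\big),$$
which is exactly the asserted Lagrange multiplier rule.

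The step that needs genuine care---and which I expect to be the main obstacle---is matching the \emph{shape} of the representation to the conclusion. The conclusion charges each inequality to a single multiplier $\lambda_i$ times a single subgradient $v_i \in \partial g_i(\infty)$ (and likewise for the equalities), whereas bare membership in $\mathrm{pos}\{\cdots\}$ a priori only supplies a finite non-negative combination that might draw several subgradients from the same $\partial g_i(\infty)$. Because $\partial g_i(\infty)$ need not be convex (indeed by Example~\ref{Example4.7} it need not be), such a combination cannot in general be collapsed into one scaled subgradient, so the bare inclusion is not quite enough. I would close this gap by not using the positive-hull inclusion as a black box but instead invoking the representation constructed \emph{inside} the proof of Proposition~\ref{Proposition5.9}: passing to the limit in the finite-dimensional estimate \cite[Corollary~4.36]{Mordukhovich2006} produces precisely a one-multiplier-per-constraint expansion $\xi = \sum_{i=1}^p \lambda_i v_i + \sum_{j=1}^q \mu_j w_j$ with $v_i \in \partial g_i(\infty)$ and $w_j \in \partial h_j(\infty)\cup\partial(-h_j)(\infty)$. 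Using this structured form rather than the summary inclusion makes the final substitution immediate.
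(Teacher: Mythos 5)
Your proposal is correct and is exactly the paper's intended argument: the paper offers no separate proof, deriving the corollary directly by combining Theorem~\ref{Theorem6.1} with Proposition~\ref{Proposition5.9}, just as you do. Your extra observation about the shape of the positive hull is well taken and is resolved precisely as you suggest, since the proof of Proposition~\ref{Proposition5.9} passes to the limit in the pointwise estimate of \cite[Corollary~4.36]{Mordukhovich2006} and delivers a one-multiplier-per-constraint representation $\xi = \sum_{i=1}^p \lambda_i v_i + \sum_{j=1}^q \mu_j w_j$ with $v_i \in \partial g_i(\infty)$ and $w_j \in \partial h_j(\infty) \cup \partial(-h_j)(\infty)$.
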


\subsection{Coercivity and weak sharp minima at infinity}

The next theorem presents some properties of optimization problems.

\begin{theorem}[coercivity and weak sharp minima] \label{Theorem6.4}
	Assume that $0 \not \in \partial f(\infty)+ N_{\Omega}(\infty).$ Then the following statements hold:
	\begin{enumerate}[{\rm (i)}]
		\item The optimal solution set $\mathrm{Sol}$ in the problem \eqref{Problem} is nonempty compact.
		\item There exist constants $c > 0$ and $R > 0$ such that
		\begin{eqnarray*}
			f(x) - f_* &\ge& c\, \mathrm{dist} (x, \mathrm{Sol}) \quad \textrm{ for all } \quad x \in \Omega \setminus \mathbb{B}_R,
		\end{eqnarray*}
		where $\mathrm{dist} (x, \mathrm{Sol})$ stands for the distance from $x$ to $\mathrm{Sol}.$
		\item $f$ is coercive on $\Omega,$ i.e., if $x \to \infty$ and $x \in \Omega$ then $f(x) \to \infty.$
	\end{enumerate}
\end{theorem}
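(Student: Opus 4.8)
The plan is to deduce all three assertions from a single quantitative growth estimate for the extended objective $g := f + \delta_{\Omega}$, which by (A1), the closedness of $\Omega$, and (A3) is proper, lower semi-continuous, and bounded below with $\inf_{\mathbb{R}^n} g = f_*$.

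\emph{Step 1: from the hypothesis to a uniform slope bound.} By Example~\ref{Example4.5} one has $\partial^{\infty}\delta_{\Omega}(\infty)=\partial\delta_{\Omega}(\infty)=N_{\Omega}(\infty)$, so (A2) is precisely the qualification condition~\eqref{Eqn5} for the pair $(f,\delta_{\Omega})$. The sum rule (Proposition~\ref{Proposition4.9}) then gives $\partial g(\infty)\subset\partial f(\infty)+N_{\Omega}(\infty)$, whence the standing hypothesis yields $0\notin\partial g(\infty)$. Since $\partial g(\infty)=\Limsup_{x\to\infty}\partial g(x)$ by Proposition~\ref{Proposition4.4}, I would upgrade this into a uniform bound: there are $c>0$ and $R>0$ with $\mathrm{dist}(0,\partial g(x))\ge c$ whenever $\|x\|>R$. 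Indeed, were this false, choosing $c=1/k$ and $R=k$ would produce $x_k\to\infty$ and $u_k\in\partial g(x_k)$ with $u_k\to0$, forcing $0\in\partial g(\infty)$.

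\emph{Step 2: the growth estimate via Ekeland.} The core step is to establish
\[
g(x)-f_* \ \ge\ c\,(\|x\|-R)\qquad\text{for all }x\in\Omega\text{ with }\|x\|>R.
\]
Fix such an $x$; I may assume $x\in\mathrm{dom}f$ and set $\epsilon:=g(x)-f_*$. If $\epsilon=0$ then $x$ minimizes $g$, so the Fermat rule (Lemma~\ref{Lemma2.8}) gives $0\in\partial g(x)$, contradicting the slope bound; hence no minimizer lies outside $\mathbb{B}_R$ and the inequality holds vacuously. If $\epsilon>0$, then for every $\lambda\in(0,\|x\|-R)$ the Ekeland principle (Lemma~\ref{Lemma2.11}) produces $\bar x$ with $\|\bar x-x\|\le\lambda$ minimizing $y\mapsto g(y)+\frac{\epsilon}{\lambda}\|y-\bar x\|$. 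Combining the Fermat rule with the sum rule (Lemma~\ref{Lemma2.9}) and $\partial(\|\cdot-\bar x\|)(\bar x)=\mathbb{B}$ yields $\mathrm{dist}(0,\partial g(\bar x))\le\epsilon/\lambda$; but $\|\bar x\|\ge\|x\|-\lambda>R$, so the slope bound forces $\epsilon/\lambda\ge c$. Letting $\lambda\uparrow\|x\|-R$ gives the estimate.

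\emph{Step 3: harvesting the conclusions.} Nonemptiness of $\mathrm{Sol}$ is the contrapositive of Theorem~\ref{Theorem6.1}. The growth estimate forces $g(x)\to\infty$ as $x\to\infty$ in $\Omega$, which is coercivity~(iii), and it confines every minimizer to $\mathbb{B}_R$, so $\mathrm{Sol}$ is closed and bounded, hence compact, giving~(i). For~(ii), since $\mathrm{Sol}\subset\mathbb{B}_R$ we have $\mathrm{dist}(x,\mathrm{Sol})\le\|x\|+R\le 2(\|x\|-R)$ once $\|x\|>3R$; together with the growth estimate this gives $f(x)-f_*\ge\frac{c}{2}\,\mathrm{dist}(x,\mathrm{Sol})$ for $x\in\Omega$ with $\|x\|>3R$, i.e.\ the weak sharp minima with $c'=c/2$ and $R'=3R$. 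The main obstacle is Step~2: the slope bound is available only outside $\mathbb{B}_R$, so the Ekeland auxiliary point $\bar x$ must be prevented from drifting into $\mathbb{B}_R$, which is exactly why $\lambda$ is taken strictly below $\|x\|-R$ and why the estimate is naturally phrased through $\|x\|-R$; converting it into a bound in terms of $\mathrm{dist}(x,\mathrm{Sol})$ is then the routine comparison performed in Step~3.
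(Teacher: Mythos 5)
Your proof is correct, and it takes a genuinely different route from the paper's. The paper never forms a global growth estimate: it establishes boundedness of $\mathrm{Sol}$ and the weak sharp minima by two separate contradiction arguments, each applying the Fermat rule together with the \emph{pointwise} sum rule $\partial(f+\delta_\Omega)(x)\subset\partial f(x)+N_\Omega(x)$ --- valid for $\|x\|>R$ thanks to the localized qualification \eqref{Eqn11}, which the paper extracts from the proof of Proposition~\ref{Proposition4.9} --- and then passes to the limit along the (Ekeland-produced) sequence, splitting into a bounded case (contradicting $0\notin\partial f(\infty)+N_\Omega(\infty)$) and an unbounded case (contradicting (A2) after normalization). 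You instead aggregate once and for all: a single application of Proposition~\ref{Proposition4.9} with Example~\ref{Example4.5} gives $0\notin\partial g(\infty)$ for $g=f+\delta_\Omega$, Proposition~\ref{Proposition4.4} upgrades this to the uniform slope bound $\mathrm{dist}(0,\partial g(x))\ge c$ for $\|x\|>R$, and a direct (non-contradiction) Ekeland argument --- with $\lambda$ correctly kept below $\|x\|-R$ so that the auxiliary point stays in the region where the slope bound is available, and with the $\epsilon=0$ case correctly disposed of by Fermat --- yields the linear growth $g(x)-f_*\ge c\,(\|x\|-R)$, from which (i)--(iii) all follow by elementary comparisons (both proofs use Theorem~\ref{Theorem6.1} identically for nonemptiness of $\mathrm{Sol}$). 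Your route buys a strictly stronger quantitative conclusion (linear growth in $\|x\|$, not merely in $\mathrm{dist}(x,\mathrm{Sol})$, which makes coercivity immediate rather than a corollary of (i) and (ii)), a single use of the qualification condition, and no bounded/unbounded dichotomy; the paper's route avoids the intermediate slope-bound step and targets the distance-based inequality directly, at the cost of repeating the two-case multiplier analysis in both parts of the proof. All the technical hypotheses you invoke do check out: $g$ is proper, lower semi-continuous with unbounded domain by (A1) and the closedness of $\Omega$, so Propositions~\ref{Proposition4.4} and \ref{Proposition4.9} apply; the Ekeland point $\bar x$ lies in $\mathrm{dom}\,g\subset\Omega$ since $g(\bar x)\le g(x)<\infty$; and the sum rule of Lemma~\ref{Lemma2.9} applies because $\frac{\epsilon}{\lambda}\|\cdot-\bar x\|$ is Lipschitz.
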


\begin{proof}
	The assumption (A2), together with Example~\ref{Example4.5}, implies that
	\begin{eqnarray*}
		\partial^{\infty}f(\infty)\cap \big(-\partial^\infty \delta_{\Omega}(\infty) \big) &=& \{0\}.
	\end{eqnarray*}
	Hence, from the proof of Proposition~\ref{Proposition4.9}, we can find a constant $R > 0$ such that
	\begin{eqnarray} \label{Eqn11}
	\partial^{\infty}f(x)\cap \big(-\partial^\infty \delta_{\Omega}(x) \big) &=& \{0\} \quad \textrm{ when} \quad \|x\| > R.
	\end{eqnarray}
	
	(i) The set $\mathrm{Sol}$ is closed because $f$ is lower semi-continuous and $\Omega \subset \mathbb{R}^n$ is closed. Moreover, by Theorem~\ref{Theorem6.1}, it is nonempty. Hence, it remains to show that $\mathrm{Sol}$ is bounded. By contradiction, suppose that there exists a sequence $x_k \in \mathrm{Sol}$ tending to infinity. Observe that
	\begin{eqnarray*}
		f_* &:=& \inf_{x\in \Omega} f(x) \ = \ \inf_{x\in \mathbb{R}^n} (f + \delta_\Omega)(x).
	\end{eqnarray*}
	Hence, $x_k$ is a global minimizer of the lower semi-continuous function $f + \delta_\Omega \colon \mathbb{R}^n \to \overline{\mathbb{R}}.$ By the Fermat rule (Lemma~\ref{Lemma2.8}), we obtain $0 \in \partial (f + \delta_\Omega)(x_k).$ 
	For $k$ sufficiently large, the relation~\eqref{Eqn11} holds at $x_k$ and so the sum rule (see \cite[Theorem~2.19]{Mordukhovich2018}) can be applied. Therefore,
	\begin{eqnarray*}
		0 &\in& \partial f(x_k) +  \partial \delta_\Omega(x_k) \ = \ \partial f(x_k) + N_\Omega(x_k),
	\end{eqnarray*}
	Letting $k \to \infty$ we deduce from Propositions~\ref{Proposition3.5} and \ref{Proposition4.4} that $0 \in \partial f(\infty)+ N_{\Omega}(\infty),$ which contradicts the assumption~(A2).
	
	(ii) By contradiction, suppose that there exists a sequence $x_k \in \Omega$ tending to infinity such that
	\begin{eqnarray*}
		0 \ \le \ f(x_k) - f_* &<& \frac{1}{k^2}\, \mathrm{dist} (x_k, \mathrm{Sol}).
	\end{eqnarray*}
	Then $\mathrm{dist} (x_k, \mathrm{Sol}) > 0.$ Applying the Ekeland variational principle (Lemma~\ref{Lemma2.11}), we get a sequence $\overline{x}_k \in \Omega$ such that 
	\begin{eqnarray*}
		\|x_k - \overline{x}_k\| &\leq& \dfrac{1}{k}\mathrm{dist} (x_k, \mathrm{Sol}), \\
		f(\overline{x}_k) - f_* &\leq& f(x) - f_* + \frac{1}{k} \|x - \overline{x}_k\| \quad \textrm{for all } \quad x \in \Omega.
	\end{eqnarray*}
	The first inequality implies that the sequence $\overline{x}_k$ tends to infinity as the sequence $x_k$ tends to infinity and the set $\mathrm{Sol}$ is bounded; while the second inequality says that $\overline{x}_k$ is a global minimizer of the lower semi-continuous function
	$$\mathbb{R}^n \to \overline{\mathbb{R}}, \quad x \mapsto f(x) - f_* + \delta_\Omega(x) + \dfrac{1}{k}\|x -\overline{x}_k\| .$$
	By the Fermat rule (Lemma~\ref{Lemma2.8}), we get
	$$0 \in \partial \big ( f(\cdot) - f_* + \delta_\Omega(\cdot)+\dfrac{1}{k}\|\cdot-\overline{x}_k\| \big)(\overline{x}_k). $$
	The sum rule (see \cite[Theorem~2.19]{Mordukhovich2018}) gives
	\begin{eqnarray*}
		0
		&\in& \partial \big ( f(\cdot) - f_* + \delta_\Omega(\cdot) \big)(\overline{x}_k) + \dfrac{1}{k}\partial (\|\cdot-\overline{x}_k\|)(\overline{x}_k) \\
		&\subset& \partial f(\overline{x}_k) +\partial \delta_{\Omega}(\overline{x}_k)+ \dfrac{1}{k}\partial (\|\cdot-\overline{x}_k\|)(\overline{x}_k).
	\end{eqnarray*}
	Note that $\partial \delta_{\Omega}(\overline{x}_k)=N_\Omega(\overline{x}_k)$ and 
	$\partial (\|\cdot-\overline{x}_k\|)(\overline{x}_k)=\mathbb{B}.$
	Therefore, 
	$$0 \in \partial f(\overline{x}_k)+N_\Omega(\overline{x}_k)+ \dfrac{1}{k} \mathbb{B},$$ 
	and so there exist ${u}_k \in \partial f(\overline{x}_k)$ and ${v}_k \in N_\Omega(\overline{x}_k)$ such that 
	\begin{equation*}
	\|{u}_k + {v}_k\| \leq \dfrac{1}{k}.
	\end{equation*}
	There are two cases to be considered:
	\subsubsection*{Case 1: the sequence ${u}_k$ is bounded} 
	Then the sequence ${v}_k$ is bounded too. Passing to subsequences, we may assume that ${u}_k$ and ${v}_k$ converge to some $u$ and $v$, respectively. 
	Clearly, $u + v = 0;$ moreover, by Propositions~\ref{Proposition3.5} and \ref{Proposition4.4}, $u \in \partial f(\infty)$ and $v \in N_\Omega(\infty).$ Hence, $0 \in \partial f(\infty)+N_\Omega(\infty),$ which contradicts our assumption.
	
	\subsubsection*{Case 2: the sequence ${u}_k$ is unbounded}
	Then the sequence ${v}_k$ is unbounded too, and so we may assume that $\|v_k\| \to \infty.$ Since $u_k + v_k$ is a convergent sequence, it is easy to see that the sequence $\frac{u_k}{\|v_k\|}$ is bounded. Passing to subsequences if necessary we may assume that the sequences $\frac{u_k}{\|v_k\|}$ and $\frac{v_k}{\|v_k\|}$ converge to some $u$ and $v,$ respectively. 
	Clearly, $u = -v \ne 0;$ moreover, by Proposition~\ref{Proposition4.4}, $u \in \partial^\infty f(\infty) $ and $v \in \partial^\infty \delta(\infty) = N_{\Omega}(\infty).$ Therefore $\partial^{\infty}f(\infty)\cap \big(-N_{\Omega}(\infty) \big) \ne \{0\},$ which contradicts the assumption~(A2).
	
	(iii) This is a direct consequence of (i) and (ii).
\end{proof}

\subsection{Stability}
We present now a stability theorem about parametrized optimization problems, which comes out of the results already obtained.
To this end, for each $u \in \mathbb{R}^n$ define the function $f_u \colon \mathbb{R}^n \to \overline{\mathbb{R}}$ by $f_u(x) := f(x) - \langle u, x \rangle.$ Consider the perturbed optimization problem
\begin{equation} \label{Pu}
\mathrm{minimize } \ f_u(x) \quad \textrm{ over } \quad x \in \Omega, \tag{P$_u$}
\end{equation}
and let $\mathrm{Sol}(u)$ denote its optimal solution set. 

\begin{theorem}[stability results]
	Assume that $0 \not \in \partial f(\infty)+ N_{\Omega}(\infty).$ Then there exists a constant $\epsilon > 0$ such that for all $u \in \mathbb{B}_\epsilon,$ the following statements hold:
	\begin{enumerate}[{\rm (i)}]
		\item $f_u$ is bounded from below on $\Omega.$ 
		\item The optimal solution set $\mathrm{Sol}(u)$ in the problem~\eqref{Pu} is nonempty compact.
		\item The inclusion
		\begin{eqnarray*}
			\Limsup_{u \to 0} \mathrm{Sol}(u) &\subset& \mathrm{Sol}(0)
		\end{eqnarray*}
		is valid
	\end{enumerate}
\end{theorem}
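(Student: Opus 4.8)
The plan is to prove the three statements by reducing the perturbed problem to the unperturbed theory already developed. The key observation is that the perturbation term $\langle u, x \rangle$ is linear, hence globally Lipschitz, so it has no effect on the singular subdifferential at infinity: one checks directly from Proposition~\ref{Proposition4.4} that $\partial^{\infty} f_u(\infty) = \partial^{\infty} f(\infty)$ for every $u$, since a linear function contributes nothing to the limit of $r\,\partial f(x)$ as $r \searrow 0$. Consequently the qualification condition (A2), namely $\partial^{\infty} f(\infty) \cap (-N_{\Omega}(\infty)) = \{0\}$, is inherited by $f_u$ for \emph{all} $u$.

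\begin{proof}
Since $\langle u, \cdot \rangle$ is linear and globally Lipschitz, the sum rule (Proposition~\ref{Proposition4.9}) together with Proposition~\ref{Proposition4.4} gives, for every $u \in \mathbb{R}^n$,
\begin{eqnarray*}
\partial^{\infty} f_u(\infty) &=& \partial^{\infty} f(\infty), \\
\partial f_u(\infty) &=& \partial f(\infty) - u.
\end{eqnarray*}
Hence the assumption (A2) holds verbatim for $f_u$. Moreover, because $u \mapsto \partial f(\infty) - u$ is a continuous translation and $\partial f(\infty) + N_{\Omega}(\infty)$ is closed with $0 \notin \partial f(\infty) + N_{\Omega}(\infty)$, there exists $\delta > 0$ with $\mathbb{B}_\delta \cap \big(\partial f(\infty) + N_{\Omega}(\infty)\big) = \emptyset$. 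For $\|u\| < \delta$ we then have
$$0 \ \notin \ (\partial f(\infty) - u) + N_{\Omega}(\infty) \ = \ \partial f_u(\infty) + N_{\Omega}(\infty).$$
Set $\epsilon := \delta$. Thus for all $u \in \mathbb{B}_\epsilon$ the function $f_u$ satisfies the hypothesis of Theorem~\ref{Theorem6.4}.

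Statements (i) and (ii) now follow. Indeed, applying Theorem~\ref{Theorem6.4}(iii) to $f_u$, we see $f_u$ is coercive on $\Omega$, so in particular $f_u$ is bounded from below on $\Omega$ (as a lower semi-continuous coercive function on a closed set attains a finite infimum), which proves (i); and Theorem~\ref{Theorem6.4}(i) applied to $f_u$ shows that $\mathrm{Sol}(u)$ is nonempty compact, which proves (ii).

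It remains to verify the outer semicontinuity inclusion in (iii). Take any $\overline{x} \in \Limsup_{u \to 0} \mathrm{Sol}(u)$. By definition of the Painlev\'e--Kuratowski outer limit, there exist sequences $u_k \to 0$ and $x_k \in \mathrm{Sol}(u_k)$ with $x_k \to \overline{x}$. Since $x_k \in \Omega$ and $\Omega$ is closed, $\overline{x} \in \Omega$. For each $k$ and every $x \in \Omega$ we have $f_{u_k}(x_k) \le f_{u_k}(x)$, that is,
$$f(x_k) - \langle u_k, x_k \rangle \ \le \ f(x) - \langle u_k, x \rangle.$$
Fix an arbitrary $x \in \Omega$. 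As $k \to \infty$ the terms $\langle u_k, x_k \rangle \to 0$ and $\langle u_k, x \rangle \to 0$ because $u_k \to 0$ and $x_k$ converges (hence is bounded). Using the lower semi-continuity of $f$ on the left and passing to the limit on the right, we obtain
$$f(\overline{x}) \ \le \ \liminf_{k \to \infty} f(x_k) \ \le \ \liminf_{k \to \infty}\big(f(x) - \langle u_k, x\rangle + \langle u_k, x_k\rangle\big) \ = \ f(x).$$
Since $x \in \Omega$ was arbitrary, $\overline{x}$ minimizes $f$ over $\Omega$, i.e. $\overline{x} \in \mathrm{Sol}(0)$. This establishes the inclusion and completes the proof.
\end{proof}

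The main obstacle is the uniform qualification step: one must ensure that (A2) is preserved under perturbation \emph{uniformly} in $u$ near $0$, and that the exclusion $0 \notin \partial f_u(\infty) + N_{\Omega}(\infty)$ survives for a whole ball of parameters rather than merely at $u = 0$. This is where the invariance $\partial^{\infty} f_u(\infty) = \partial^{\infty} f(\infty)$ and the closedness of $\partial f(\infty) + N_{\Omega}(\infty)$ do the essential work; the stability inclusion (iii) is then a routine limiting argument once the optimality inequality is written out.
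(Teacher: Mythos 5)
Your part (iii) is correct and self-contained: the direct outer-limit argument (take $u_k \to 0$, $x_k \in \mathrm{Sol}(u_k)$ with $x_k \to \overline{x}$, use closedness of $\Omega$, the optimality inequality, and lower semi-continuity of $f$) is essentially the paper's value-convergence argument written out more carefully. The trouble is in (i) and (ii), where your proof is circular. Theorem~\ref{Theorem6.4} is stated under the standing assumptions (A1)--(A3) of Section~\ref{Section6}; in particular, its proof (through Theorem~\ref{Theorem6.1}) runs Ekeland's variational principle, which requires a \emph{finite} infimum, i.e., assumption (A3). For $f_u$, assumption (A3) is precisely statement (i) --- the very thing you are trying to prove --- so you cannot invoke Theorem~\ref{Theorem6.4}(iii) for $f_u$ to get coercivity and then deduce boundedness below from it. Nor can the hypothesis $0 \notin \partial f_u(\infty) + N_{\Omega}(\infty)$ do this work on its own: for $f(x) = -x$ and $\Omega = \mathbb{R}$ one has $\partial f(\infty) = \{-1\}$ and $N_{\Omega}(\infty) = \{0\}$, so $0 \notin \partial f(\infty) + N_{\Omega}(\infty)$, yet $f$ is unbounded below. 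The missing idea is quantitative: the paper first applies Theorem~\ref{Theorem6.4} to $f$ itself to obtain the weak sharp minima at infinity, $f(x) - f_* \ge c\, \mathrm{dist}(x, \mathrm{Sol}(0))$ for $x \in \Omega$ with $\|x\|$ large, and then chooses $\epsilon < c/2$ so that the linear perturbation is dominated by this linear growth, giving $f_u(x) \ge f_* + (c - \epsilon)\|x\| - cR \ge f_*$ outside a ball and hence (i); only after that do (A1)--(A3) hold for $f_u$, making Theorem~\ref{Theorem6.4} legitimately applicable for (ii). Note that your $\epsilon$ is not tied to the sharp-minima constant $c$ at all, so merely reordering your steps would not repair (i).

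There is a second, smaller gap in your uniform qualification step: you assert that $\partial f(\infty) + N_{\Omega}(\infty)$ is closed, but a sum of two closed sets need not be closed, so $0 \notin \partial f(\infty) + N_{\Omega}(\infty)$ does not by itself yield a ball $\mathbb{B}_\delta$ disjoint from this set. The positive-distance claim is in fact true under (A2), but it requires the bounded/unbounded dichotomy from the proof of Proposition~\ref{Proposition4.9}: if $a_k + b_k \to 0$ with $a_k \in \partial f(\infty)$ and $b_k \in N_{\Omega}(\infty)$, either $a_k$ is bounded and passing to limits gives $0 \in \partial f(\infty) + N_{\Omega}(\infty)$, or, after normalizing and using a diagonal extraction through Proposition~\ref{Proposition4.4}, one produces a unit vector in $\partial^{\infty} f(\infty) \cap \bigl(-N_{\Omega}(\infty)\bigr)$, contradicting (A2). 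The paper sidesteps this entirely by working with $\partial (f + \delta_{\Omega})(\infty)$, which is closed by construction (it is an outer limit), proving $\epsilon_1 := \inf\{\|u\| \mid u \in \partial(f + \delta_{\Omega})(\infty)\} > 0$ sequentially and invoking the sum rule only afterwards; this $\epsilon_1$ is also exactly what guarantees $0 \notin \partial(f_u + \delta_{\Omega})(\infty)$, the form in which Theorem~\ref{Theorem6.4} is actually applied to the perturbed problem, and what yields the uniform localization $\mathrm{Sol}(u) \subset \mathbb{B}_R$. Your identities $\partial^{\infty} f_u(\infty) = \partial^{\infty} f(\infty)$ and $\partial f_u(\infty) = \partial f(\infty) - u$ are correct and clean, but they cannot substitute for these two missing steps.
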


\begin{proof}
	The desired statements will be established after some preliminaries. We first show that
	\begin{eqnarray*}
		\epsilon_1 &:=& \inf\{\|u\| \mid u \in \partial (f + \delta_\Omega)(\infty)\} \ > \ 0.
	\end{eqnarray*}
	By contradiction, suppose that there exists a sequence $u_k \in \partial (f + \delta_\Omega)(\infty)$ such that $u_k \to 0.$ In view of Proposition~\ref{Proposition4.4}, we find sequences $x_k \to \infty$ and $v_k \in \partial (f + \delta_\Omega)(x_k)$ such that $\|u_k - v_k\| < \frac{1}{k}.$ Then
	\begin{eqnarray*}
		0 \ = \ \lim_{k \to \infty} u_k \ = \ \lim_{k \to \infty} v_k \ \in \  \Limsup_{x \to \infty} \partial (f + \delta_\Omega)(x) 
		&= & \partial (f + \delta_\Omega)(\infty).
	\end{eqnarray*}
	It follows from Proposition~\ref{Proposition4.9} and Example~\ref{Example4.5} that
	\begin{eqnarray*}
		0 &\in& \partial f (\infty) + \partial \delta_\Omega(\infty) \ = \ \partial f(\infty)  + N_\Omega(\infty) ,
	\end{eqnarray*}
	in contradiction to our assumption.
	
	We next show that there exists a constant $R_1 > 0$ such that
	\begin{eqnarray} \label{Eqn12}
	u &\not \in& \partial (f + \delta_\Omega)(x) \quad \textrm{ for all } \quad \|u\| < \epsilon_1 \quad \textrm{ and all } \quad \|x\| < R_1.
	\end{eqnarray}
	Indeed, if this is not the case, then there exist a vector $u \in \mathbb{R}^n$ with $\|u\| < \epsilon_1$ and a sequence $x_k \to \infty$ such that $u \in \partial (f + \delta_\Omega)(x_k)$ for all $k.$ This, together with Proposition~\ref{Proposition4.4}, yields $u \in \partial (f + \delta_\Omega)(\infty),$ which contradicts the definition of $\epsilon_1.$
	
	Finally, by Theorem~\ref{Theorem6.4}, there exist constants $c > 0$ and $R_2 > 0$ such that the optimal solution set $\mathrm{Sol}(0)$ in the problem~\eqref{Problem} is nonempty compact and is contained in $\mathbb{B}_{R_2}$ and that
	\begin{eqnarray*}
		f(x) - f_* &\ge& c\, \mathrm{dist} (x, \mathrm{Sol}(0)) \quad \textrm{ for all } \quad x \in \Omega \setminus \mathbb{B}_{R_2},
	\end{eqnarray*}
	where $\mathrm{dist} (x, \mathrm{Sol}(0))$ stands for the distance from $x$ to $\mathrm{Sol}(0).$
	
	Let $\epsilon \in (0, \min\{\epsilon_1, \frac{c}{2}\}),$ $R := \max\{R_1, R_2\}$ and take any $u \in \mathbb{B}_\epsilon.$ We now show the desired statements. 
	
	(i) We have for all $x \in \Omega \setminus \mathbb{B}_{2R},$ 
	\begin{eqnarray*}
		f_u(x) 
		&=& f(x) - \langle u, x \rangle \\
		&\ge& f(x) - \epsilon \|x\| \\
		&\ge& f_* + c\, \mathrm{dist} (x, \mathrm{Sol}(0)) - \epsilon \|x\| \\
		&=& f_* + c\, \|x - \overline{x}\| - \epsilon \|x\| \\
		&\ge& f_* + (c - \epsilon) \|x\| - c\, \|\overline{x}\| \\
		&\ge& f_* + \frac{c}{2} \|x\| - cR \\
		&\ge& f_*,
	\end{eqnarray*}
	where $\overline{x}$ is a point in $\mathrm{Sol}(0)$ such that $\|x - \overline{x}\| = \mathrm{dist} (x, \mathrm{Sol}(0)).$ 
	Therefore, $f_u$ is bounded from below on $\Omega$ because we know from the assumption~(A3) that $f$ is bounded from below on $\Omega.$
	
	(ii) By the assumption~(A1), $\mathrm{dom} f_u \cap \Omega$ is unbounded. In light of Proposition~\ref{Proposition4.9}, it is easy to see that $\partial^\infty f_u(\infty) \subset \partial^\infty f(\infty)$ for all $u \in \mathbb{R}^n.$
	This, together with the assumption~(A2), yields
	\begin{eqnarray*}
		\partial^\infty f_u (\infty) \cap \big( -N_{\Omega} (\infty) \big)  & = & \{0\}.
	\end{eqnarray*}
	Therefore, the assumptions~(A1)--(A3) hold with $f$ replaced by $f_u.$ Moreover, the choice of $\epsilon,$ 
	along with the inclusion $\partial f_u(\infty) + N_{\Omega}(\infty) \subset \partial f(\infty) + N_{\Omega}(\infty) - u,$
	ensures that
	\begin{eqnarray*}
		0 & \not \in & \partial f_u(\infty) + N_{\Omega}(\infty).
	\end{eqnarray*}
	On the other hand, it is clear that $\mathrm{Sol}(u)$ is the optimal solution set in the problem
	\begin{equation*}
	\mathrm{minimize } \ \big( f_u + \delta_{\Omega} \big ) (x) \quad \textrm{ over } \quad x \in \mathbb{R}^n.
	\end{equation*}
	Hence, by Theorem~\ref{Theorem6.4}, the set $\mathrm{Sol}(u)$ is nonempty compact. Moreover it is contained in $\mathbb{B}_R$ 
	because otherwise there is $x \in \mathrm{Sol}(u)$ with $\|x\| > R;$ then, in light of the Fermat rule (Lemma~\ref{Lemma2.8}), we get
	\begin{eqnarray*}
		0 &\in& \partial (f_u + \delta_{\Omega})(x) \ = \ \partial (f + \delta_{\Omega})(x)  - u
	\end{eqnarray*}
	which contradicts \eqref{Eqn12}.
	
	(iii) Take any $\overline{x} \in \mathrm{Sol}(0).$ By definition, we have for all $x \in \mathrm{Sol}(u)$ that
	\begin{eqnarray*}
		f(\overline{x})  - \langle u, x \rangle & \le & f(x) - \langle u, x \rangle  \ = \ f_u(x) \ \le \ f_u(\overline{x})   \ = \ f(\overline{x})  - \langle u, \overline{x} \rangle.
	\end{eqnarray*}
	Therefore,
	\begin{eqnarray*}
		\inf_{x \in \Omega} f(x) \ = \ f(\overline{x}) &=& \lim_{u \to 0} \inf_{x \in \Omega}f_u(x),
	\end{eqnarray*}
	which yields the last assertion of the theorem.
\end{proof}

\bibliographystyle{abbrv}
%\bibliography{D:/Submission/BibPureMath1,D:/Submission/BibAppMath1}

\end{document}